\definecolor{citegreen}{rgb}{0,0.6,0}
\definecolor{refred}{rgb}{0.8,0,0}
\title{Ends of (singular) Ricci shrinkers}
\author{Alessandro Bertellotti and Reto Buzano}
\date{}
\newtheorem{theorem}{Theorem}[section]
\newtheorem{lemma}[theorem]{Lemma}
\newtheorem{cor}[theorem]{Corollary}
\newtheorem{prop}[theorem]{Proposition}
\theoremstyle{definition}
\newtheorem{rem}[theorem]{Remark}
\newtheorem{defn}[theorem]{Definition}
\numberwithin{equation}{section}
\newtheorem{claim}[theorem]{Claim}
\newcommand{\n}{\mathcal{N}}
\newcommand{\N}{\mathbb{N}}
\newcommand{\R}{\mathbb{R}}
\newcommand{\e}{\mathsf{E}}
\newcommand{\X}{\mathcal{X}}
\newcommand{\s}{\mathcal{S}}
\newcommand{\reg}{\mathcal{R}}
\newcommand{\m}{\mathfrak{M}}
\newcommand{\h}{\mathcal{H}}
\newcommand{\E}{\mathcal{E}}
\newcommand{\f}{\mathsf{F}}
\renewcommand{\r}{\mathsf{r}}
\newcommand{\kn}{\mathbin{\bigcirc\mspace{-15mu}\wedge\mspace{3mu}}}
\newcommand{\Crit}{\mathrm{Crit}}
\DeclareMathOperator{\Ric}{Ric}
\DeclareMathOperator{\Rm}{Rm}
\DeclareMathOperator{\Riem}{Riem}
\DeclareMathOperator{\Vol}{Vol}
\DeclareMathOperator{\diam}{diam}
\newcommand\printaddress{{
\setlength{\parindent}{15pt}
\footnotesize~
\par
{\scshape Alessandro Bertellotti}
\newline 
SISSA, 
Via Bonomea 265, 
34136 Trieste, Italy
\newline
\emph{E-mail address:} 
\texttt{abertell@sissa.it}
\par
{\scshape Reto Buzano}
\newline 
Universit\`a di Torino, 
Dipartimento di Matematica,
Via Carlo Alberto 10, 
10123 Torino, Italy 
\newline
\emph{E-mail address:} 
\texttt{reto.buzano@unito.it}
\par
}}
\begin{document}
\maketitle
\begin{abstract}
We estimate the number of ends of smooth and singular Ricci shrinkers focussing first on general ends and later on asymptotically conical ones. In particular, we obtain a variety of applications to sequences of Ricci shrinkers converging in a weak pointed sense to a possibly singular limit Ricci shrinker, for instance no new conical end can form in the limit. 
\end{abstract}

\section{Introduction and main results}\label{introduction}

A \emph{smooth} gradient shrinking Ricci soliton, \emph{Ricci shrinker} for short, is a complete, connected Riemannian manifold $(M,g)$ together with a smooth function $f\in C^{\infty}(M)$, called potential, satisfying 
\begin{equation}\label{eq.shrinker}
\Ric+\nabla^{2}f=\frac{1}{2} g.
\end{equation}
Besides being natural generalisations of positive Einstein manifolds, Ricci shrinkers play a prominent role in the study of the Ricci flow. They are self-similar solutions to the Ricci flow equation
\begin{equation*}
\frac{\partial }{\partial t}g_{t}=-2\Ric(g_{t})
\end{equation*}
and model finite time singularities. Indeed, if the Ricci flow is of Type I, any blow up sequence at a singular point will smoothly sub-converge to a smooth non-flat Ricci shrinker \cite{Na10,EMT11,MM15}. Local singularities are modelled by non-compact Ricci shrinkers, and their geometry at infinity can provide some useful information about the original manifold. For example, if one could flow past the singularity in a weak sense, the number of components that the manifold is expected to separate into (at least in a suitable local sense) should be the same as the number of ends of the singularity model. Hence asking how many ends a Ricci shrinker can have is a natural question. We refer to Section \ref{sectionends} for precise definitions of the set of ends $\mathcal{E}$ (see Definitions \ref{def.end1} and \ref{def.end2}) and explanations on how to count their number properly. 

Some results about estimating the number of ends of a Ricci shrinker can be found in \cite{msw} and \cite{mw22}. In particular, in \cite{mw22} Munteanu-Wang proved that if an $n$-dimensional Ricci shrinker has scalar curvature $S\leq \frac{n}{3}$ then it has only one end. The same conclusion was recently obtained by Li-Wang \cite{kahler} if the Ricci shrinker is a K\"ahler surface. Without these assumptions, a Ricci shrinker can have several ends, the cylinder $\R \times \mathbb{S}^{n-1}$ being the most basic example of a shrinker with more than one end. While it is known that gradient \emph{steady} Ricci solitons are either connected at infinity or else are isometric to $\R \times N$, with $N$ compact and Ricci-flat \cite{mw11,mw14}, a similar result for Ricci shrinkers is not known. Indeed, it is not clear to us whether one should even expect such a result -- for example, an analogous result for shrinkers of the mean curvature flow does not hold (see for example \cite{IW,Ket} for an analytical example, or \cite{BNS} for a different numerical one).

In this article we will also consider \emph{singular} Ricci shrinkers defined as follows.

\begin{defn}\label{singularRS}  
A \emph{singular space} of dimension $n$ is a tuple $(\X,d,\s,g)$ such that
\begin{enumerate}
\item $(\X,d)$ is a locally compact, complete and connected metric length space with a singular-regular decomposition $\X=\s \cup \reg$, where the regular set $\reg=\X \setminus \s$ is open and dense and has the structure of a differentiable $n$-manifold;
\item $(\reg,g)$ is an incomplete Riemannian manifold with induced metric $d_g$ coinciding with $d\vert_{\reg}$ and $(\X,d)$ is its metric completion;
\item for every compact subset $K\subseteq \X$ and every $R>0$ there are positive constants $0<\kappa_{1}(K,R) < \kappa_{2}(K,R)$ such that for all $x\in K$ and $0<r<R$
\begin{equation*}
\kappa_{1}r^{n}\leq \Vol(B(x,r)\cap \reg)\leq \kappa_{2}r^{n}.
\end{equation*}
\end{enumerate}
We say that $(\X,d,\s,g)$ has \emph{mild singularities} if in addition
\begin{enumerate}
\item[4.] for any $x\in \reg$ there is a closed subset $Q_x$ of measure zero such that for any $y \in \reg \setminus Q_x$ there is a minimising geodesic between $x$ and $y$ whose image lies in $\reg$.
\end{enumerate}
Finally, a \emph{singular Ricci shrinker} is a singular space $(\X,d,\s,g)$ with mild singularities such that $(\reg,g)$ is an incomplete gradient Ricci shrinker (i.e.~there exists a smooth function $f \in C^{\infty}(\reg)$ satisfying 
\eqref{eq.shrinker} on the regular part) with nonnegative scalar curvature $S\geq 0$.
\end{defn}

In the following we will often denote a singular space, or a singular Ricci shrinker, simply with $\X$, implying the other data. 

The main reason for studying singular Ricci shrinkers comes from two compactness results: first, it is proved by Li-Li-Wang \cite{LLW} and Huang-Li-Wang \cite{HLW} that under a uniform lower bound on the entropy (defined below), a sequence of Ricci shrinkers always sub-converges to a singular Ricci shrinker in the pointed singular Cheeger-Gromov sense; see Definition \ref{def.singularCG} and Theorem \ref{LLWcompactness}. This generalises earlier work of the second author and Haslhofer in \cite{hm,hm4}. Second, Bamler \cite{Ba20_compactness,Ba20_structure} has more recently generalised the result on blow-up limits at Ricci flow singularities mentioned above to Ricci flows without the Type I assumption, working with a weak notion of convergence and with singular Ricci shrinkers. We remark that in Bamler's results the mildness condition from Point 4 of Definition \ref{singularRS} has only been announced so far (see Remark 2.17 in \cite{Ba20_structure}), but was previously obtained under the additional assumption of a scalar curvature bound \cite{bamler,Ba17} and it therefore seems natural to us to include it.
We note here that the concept of singular space is a slight modification of the one of \emph{conifold}, first appearing in \cite[Definition~1.2]{chen} in the context of
K\"ahler-Ricci flow. 

Such compactness results might give good \emph{local} control of the sequence. For example, in \cite{by} the second author and Yudowitz have refined the compactness results from \cite{hm,hm4} to obtain a local diffeomorphism finiteness result for Ricci shrinkers. Since the convergence is of pointed type, and therefore only on \emph{compact} subsets, this theory does not give any information about the possible geometry at infinity, and in particular about the number of ends. For example, the limit of a converging sequence of Ricci shrinkers could have more or fewer ends than the elements in the sequence. Therefore we are interested in estimating the number of ends both of a single (smooth or singular) Ricci shrinkers as well as of the limit of a converging sequence. 

Before stating our main results, let us recall a few facts for smooth Ricci shrinkers. More details as well as some corresponding results for singular Ricci shrinkers are given in Section \ref{section-preliminaries}. Every smooth Ricci shrinker comes with a natural basepoint $p\in M$ where the potential $f$ attains its minimum, and the growth of $f$ is controlled by a quarter of the distance from $p$ squared \cite{hm}. Moreover, the volume of geodesic balls centred at $p$ is at most Euclidean. These facts allow to normalise $f$, by adding a constant, such that
\begin{equation}\label{normalisation}
\int_{M}e^{-f}d\nu_{g}=(4\pi)^{\frac{n}{2}}.
\end{equation}
We always assume this normalisation. Under \eqref{normalisation}, the shrinker has a well-defined entropy
\begin{equation*}
\mu_{g}:=\int_{M}(|\nabla f|^{2}+S+f-n)(4\pi)^{-\frac{n}{2}}e^{-f}d\nu_{g}.
\end{equation*}
The entropy was introduced by Perelman \cite{perelman} and for a general Ricci flow it is non decreasing, therefore assuming a lower bound for it on Ricci shrinkers is quite natural. As a second consequence of the growth estimates for $f$, we can use
\begin{equation*}
\varrho=2\sqrt{f-\mu_g}
\end{equation*}
to measure distances. Our first results are the following.

\begin{theorem}\label{thm.endbound} 
Given $n \in \mathbb{N}$, $\underline{\mu}>-\infty$, and $r_0>0$, there exists $\Lambda=\Lambda(n,\underline{\mu},r_{0})$ with the following property. If $M$ is a smooth $n$-dimensional Ricci shrinker with entropy bounded below $\mu\geq \underline{\mu}$, and such that $|\nabla f|>0$ on $\{\varrho > r_0\}$, then $M$ has at most $\Lambda$ ends.
\end{theorem}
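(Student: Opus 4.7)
The plan is to use the hypothesis $|\nabla f|>0$ on $\{\varrho>r_0\}$ together with the properness of $f$ to trivialise the region near infinity topologically, reducing the end count to a count of connected components of $\{\varrho>r_0\}$, and then to bound that count by a volume argument on the annulus $A:=\{r_0<\varrho<r_0+1\}$.

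For the topological step, the quadratic growth estimate $\tfrac14(d(\cdot,p)-c_n)^2\leq f-\mu_g$ on a smooth Ricci shrinker makes $f$ proper; combined with $|\nabla f|>0$ on $\{\varrho>r_0\}=\{f>r_0^2/4+\mu_g\}$, this turns $f\vert_{\{\varrho>r_0\}}$ into a proper submersion onto the contractible ray $(r_0^2/4+\mu_g,\infty)$. By Ehresmann's fibration theorem, $\{\varrho>r_0\}$ is diffeomorphic to $F\times(r_0^2/4+\mu_g,\infty)$ for a compact fibre $F$, and each connected component is of the form $F_i\times(\mathrm{ray})$ with $F_i$ a component of $F$, hence has exactly one end. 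Therefore the number of ends of $M$ equals the cardinality of $\pi_0(\{\varrho>r_0\})$.

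To bound that cardinality I would estimate $\Vol(A)$ from above and then plant a ball of uniform volume in each component. Using \eqref{normalisation} and the standard fact $\mu_g\leq 0$ for smooth Ricci shrinkers,
\begin{equation*}
\Vol(A)\leq e^{(r_0+1)^2/4+\mu_g}\int_A e^{-f}\,d\nu_g\leq(4\pi)^{n/2}\,e^{(r_0+1)^2/4+\mu_g}\leq(4\pi)^{n/2}\,e^{(r_0+1)^2/4}.
\end{equation*}
The shrinker identity $|\nabla f|^2+S=f-\mu_g=\varrho^2/4$ with $S\geq 0$ yields $|\nabla\varrho|\leq 1$. For each component $N_i$ of $\{\varrho>r_0\}$ pick $x_i\in N_i\cap\{\varrho=r_0+\tfrac12\}$ (non-empty from the fibration); the $1$-Lipschitzness of $\varrho$ places $B(x_i,\tfrac14)$ inside $A$, and connectedness places it inside $N_i$. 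Perelman $\kappa$-non-collapsing for smooth Ricci shrinkers under $\mu_g\geq\underline{\mu}$ gives $\Vol(B(x_i,\tfrac14))\geq\kappa(n,\underline{\mu})\cdot 4^{-n}$, so the disjoint balls sum to
\begin{equation*}
\#\{\text{ends of }M\}\cdot\kappa(n,\underline{\mu})\cdot 4^{-n}\leq(4\pi)^{n/2}\,e^{(r_0+1)^2/4},
\end{equation*}
giving the desired $\Lambda(n,\underline{\mu},r_0)$.

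The main technical hinge is the fibration reduction: one has to verify that the fibres $\{f=c\}$ for $c>r_0^2/4+\mu_g$ are genuinely compact, which is precisely what the quadratic growth of $f$ on a smooth Ricci shrinker provides; once this is in place, the remainder is an assembly of standard smooth-shrinker ingredients ($\mu_g\leq 0$, the shrinker identity, and Perelman non-collapsing).
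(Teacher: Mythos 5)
Your argument is correct, and it departs from the paper's proof in two meaningful ways. For the topological reduction, you invoke Ehresmann's fibration theorem on the proper submersion $f\vert_{\{\varrho>r_0\}}$ to get a global product structure $F\times(\mathrm{ray})$; the paper instead constructs this product directly by integrating $\nabla f/|\nabla f|^{2}$ (Proposition~\ref{prop.ends-shrinker} and Proposition~\ref{endspecialcase}). The two routes land at exactly the same conclusion, namely that $\#\mathcal{E}(M)=\#\pi_{0}(D(r_{0},\infty))$; the paper's version is more self-contained (and formulated so that it extends to the singular case used in Theorem~\ref{thm.limitendbound}), while yours is quicker if one is willing to cite Ehresmann. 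For the volume upper bound, you bound $\Vol(A)$ for $A=D(r_0,r_0+1)$ using the weighted-volume normalisation \eqref{normalisation} together with the standard inequality $\mu_{g}\leq 0$ for smooth shrinkers, whereas the paper simply uses the Euclidean volume growth $\Vol(B(p,r))\leq C_{0}r^{n}$ of \eqref{volumegrowth}. Both work; the paper's choice avoids appealing to $\mu_g\le 0$ (which you should cite, e.g.\ Carrillo--Ni or Yokota) and is the one that transfers to the singular setting of Remark~\ref{rem.11singular} via Proposition~\ref{prop.limitsingularRS}. Two small points of precision: the non-collapsing constant from \eqref{noncollapsing} is $\kappa(r,n,\underline{\mu})$ with $r$ absorbed into the $r_0$-dependence via $B(x_i,\tfrac14)\subseteq B(p,r_0+\tfrac34+5n)$, so your $\kappa(n,\underline{\mu})$ should really carry an $r_0$-dependence; and you should note that if $\{\varrho>r_0\}=\emptyset$ then $M$ is compact and the bound is trivial, which is needed before applying Ehresmann. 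Neither is a gap.
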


\begin{theorem}\label{thm.limitendbound}
Let $(M_{i},g_{i},f_{i},p_{i})_{i\in \N}$ be a sequence of pointed, smooth $n$-dimensional Ricci shrinkers converging to a singular $n$-dimensional Ricci shrinker $(\X,d,\s,g,f,p)$ in the pointed singular Cheeger-Gromov sense. Suppose there exists $r_{0}>0$ such that $|\nabla f_{i}|>0$ on $\{\varrho_i >r_0\}$ for all $i\in \N$. Then the following hold:
\begin{enumerate} 
\item If there exists a compact subset $K\subseteq \X$ such that $\s\cap (\X\setminus K)$ is discrete then 
\begin{equation*}
\#\mathcal{E}(\X)\leq \liminf_{i\to \infty}\#\mathcal{E}(M_{i}).
\end{equation*}
\item If $|\nabla f|>0$ on $\{\varrho> r_0\}$ and $\s$ is contained in a compact subset then 
\begin{equation*}
\#\mathcal{E}(\X) = \lim_{i\to \infty}\#\mathcal{E}(M_{i}).
\end{equation*} 
\end{enumerate}
\end{theorem}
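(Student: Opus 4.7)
The plan is to relate the ends of each smooth Ricci shrinker $M_i$ to the connected components of level sets of $\varrho_i$ via the Morse-type gradient flow of $f_i$, and then transfer this count through the pointed singular Cheeger-Gromov convergence on the regular part. Specifically, since $|\nabla f_i|>0$ on $\{\varrho_i>r_0\}$ and $\varrho_i$ is proper, the flow of $\nabla f_i/|\nabla f_i|^2$ identifies $\{\varrho_i \geq R\}$ with $\{\varrho_i = R\}\times [R,\infty)$ for every $R>r_0$, so
\[
\#\mathcal{E}(M_i) = \#\pi_0\bigl(\{\varrho_i = R\}\bigr),
\]
uniformly bounded by $\Lambda$ via Theorem \ref{thm.endbound}. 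Under the Part 2 hypotheses the identical argument on $(\reg,g)$, combined with compactness of $\s$ (so $\{\varrho \geq R\}\subseteq \reg$ for $R$ large), gives $\#\mathcal{E}(\X) = \#\pi_0(\{\varrho=R\})$.

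\textbf{Part 2.} Fix such an $R$ and set $\Sigma:=\{\varrho=R\}\subseteq \reg$, a compact smooth hypersurface on which $|\nabla\varrho|>0$. By singular Cheeger-Gromov convergence there are, for $i$ large, smooth embeddings $\Phi_i$ on a precompact tubular neighbourhood $U\subseteq\reg$ of $\Sigma$ with $\Phi_i^\ast g_i\to g$ and $\Phi_i^\ast f_i\to f$ in $C^\infty(U)$. Smooth convergence plus non-vanishing gradient implies $\Phi_i^{-1}(\{\varrho_i=R\})\cap U$ is a hypersurface $C^\infty$-close to $\Sigma$, so $\#\pi_0$ is preserved. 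For the reverse inequality, no component of $\{\varrho_i=R\}$ can sit outside $\Phi_i(U)$: such a component would lie in a compact region of $M_i$ (since $\varrho_i$ is proper) not covered by $\Phi_i$, but the Gromov-Hausdorff part of the convergence forces any such component to accumulate in $\X$, necessarily at a point of $\{\varrho=R\}\subseteq\reg$, and hence to lie in the $\Phi_i$-image for $i$ large after possibly enlarging $U$ to capture the (at most $\Lambda$) components. This yields $\#\mathcal{E}(\X)=\#\mathcal{E}(M_i)$ for $i$ large.

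\textbf{Part 1.} Here we cannot apply Morse theory directly to $\X$ and must argue topologically. Fix $N\leq \#\mathcal{E}(\X)$, enlarge $K$ to a compact set $K_0\supseteq K\cup \{\varrho\leq r_0\}$ such that $\X\setminus K_0$ has $N$ unbounded connected components $U_1,\ldots,U_N$, and choose $R_0$ with $K_0\subseteq\{\varrho\leq R_0\}$. Pick $y_j\in U_j\cap\reg$ with $\varrho(y_j)>R_0+3$. By convergence $\Phi_i(y_j)\in\{\varrho_i>r_0\}$ for $i$ large, hence lies in a unique end $e_j^{(i)}$ of $M_i$; together with the uniform bound $\Lambda$, it then suffices to show these $N$ ends are distinct for large $i$. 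If instead $e_j^{(i)}=e_l^{(i)}$ along a subsequence, $\Phi_i(y_j),\Phi_i(y_l)$ lie in the same component $C^i$ of $\{\varrho_i>r_0\}$; the Morse structure of $C^i$ lets us choose a connecting path $\gamma_i\subseteq C^i\cap\{\varrho_i>R_0+2\}$ of uniformly bounded length. For large $i$ the compact slab $\{R_0+2\leq \varrho_i\leq \max_j\varrho(y_j)+1\}\subseteq M_i$ is covered by $\Phi_i(\reg\cap\{R_0+1\leq \varrho\leq \max_j\varrho(y_j)+2\})$ except near the finitely many isolated singular points of $\s\cap(\X\setminus K_0)$ in this slab; a general-position perturbation for $n\geq 2$ keeps $\gamma_i$ inside the image of $\Phi_i$. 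Pulling back by $\Phi_i^{-1}$ produces a path in $\reg\cap\{\varrho>R_0+1\}\subseteq\X\setminus K_0$ from $y_j$ to $y_l$, contradicting $y_j,y_l$ being in distinct unbounded components of $\X\setminus K_0$.

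\textbf{Main obstacle.} The substantive difficulty is the pullback step of Part 1: making rigorous that, inside the compact slab at hand, the complement of the image of $\Phi_i$ shrinks to arbitrarily small neighbourhoods of finitely many isolated singular points as $i\to\infty$. This should follow by combining the exhaustion property of pointed singular Cheeger-Gromov convergence with the two-sided volume bounds built into the singular-space definition and the discreteness hypothesis on $\s\cap(\X\setminus K)$; the subsequent general-position perturbation of the one-dimensional path $\gamma_i$ off the shrinking tubes is then routine for $n\geq 2$. Part 2's extra hypotheses (compact singular set, gradient nonvanishing on $\X$) are what allow the same level-set argument to be applied simultaneously to the limit and the sequence, upgrading the inequality from Part 1 to an equality.
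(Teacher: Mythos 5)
Your Part 2 argument is sound and takes essentially the same route as the paper: identify $\#\mathcal{E}$ with the number of connected components of a $\varrho$-level set (this is exactly Proposition \ref{endspecialcase}) and transfer this count through the convergence. The ``reverse inequality'' step you sketch -- that no component of $\{\varrho_i=R\}$ can sit outside $\Phi_i(U)$ -- is the content of the paper's Claim \ref{cc2}, which is carried out using the $\varepsilon_i$-pGHA property and \eqref{rhogrowth} to pin down the preimages of points in the slab.

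Your Part 1 argument is genuinely different from the paper's, and the ``main obstacle'' you flag is a real gap. You want to take a short path $\gamma_i$ in $M_i$ joining $\Phi_i(y_j)$ to $\Phi_i(y_l)$, then pull it back by $\Phi_i^{-1}$ after a general-position perturbation away from the region not covered by $\Phi_i(U_i)$. Two things are unproved and not routine. First, you need the portion of the compact slab not covered by $\Phi_i(U_i)$ to consist of metrically small balls in $M_i$ around finitely many points; the pGHA property only gives that uncovered points of $M_i$ are GH-close to $\X\setminus U_i$, and converting this into a bound on the diameter of the uncovered set in $M_i$ needs a quantitative argument via the volume bounds, which you do not supply. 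Second, even granting small balls, a one-dimensional path in an $n$-manifold ($n\geq 2$) can be pushed off a shrinking ball \emph{only if} removing the ball keeps the relevant region connected -- but the scenario the theorem is precisely ruling out is one where $M_i$ has a ``pinching neck'' through which $\gamma_i$ has no choice but to pass and whose preimage under $\Phi_i$ escapes $U_i$. A priori your perturbation cannot avoid this; only a posteriori (once the theorem is established) is such pinching excluded.

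The paper sidesteps these difficulties entirely: using the discreteness of $\s\cap(\X\setminus K)$ together with Sard's lemma applied to $\varrho\vert_{\reg}$, it produces a regular annulus $D[a,b]\subseteq\reg$, disjoint from $\s$ and containing no critical points of $f$. Claim \ref{cc} then shows each end's annulus $E[a,b]$ transfers under $\phi_i$ to contain a full connected component $V_i$ of $D_i(\alpha,\beta)$, and by Propositions \ref{prop.ends-shrinker} and \ref{endspecialcase} each such $V_i$ determines a unique end of $M_i$. Disjoint annuli give disjoint $V_i$'s, hence distinct ends, which directly yields injectivity of $\lambda_i\colon \mathcal{E}(\X)\to\mathcal{E}(M_i)$ with no path-pushing. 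This is the clean route, and it is also why the regular annulus -- rather than a path through the slab -- is the right object to transfer.
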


We refer to Sections \ref{section-preliminaries} and \ref{sectionends} for the precise definitions and properties of the concepts and quantities appearing in the above results. We have stated Theorem \ref{thm.endbound} for smooth Ricci shrinkers here, but the result can be generalised to singular Ricci shrinkers in which case the constant $\Lambda$ will additionally also depend on $\kappa_{1}$ and $\kappa_{2}$ from Definition \ref{singularRS}, see Remark \ref{rem.11singular} for more details.

We point out that working with Ricci shrinkers without critical points at large distance is a reasonable assumption that follows for example from bounded scalar curvature or more generally if the scalar curvature satisfies the growth estimate
\begin{equation*}
S(x)\leq \alpha d(x,p)^{2}+C
\end{equation*}
for every $x\in M$, where $\alpha \in (0,1/4)$ and $C>0$, see \cite{hm}. 

The preceding theorems are combined with the compactness theory of \cite{hm,hm4,LLW,HLW} in Section \ref{applications-convergence} to derive informations about some moduli spaces of Ricci shrinkers. We will prove that the moduli spaces of Ricci shrinkers with precisely $N$ ends is compact (or pre-compact), meaning no ends can form, disappear, or merge, under a variety of different assumptions, see Corollaries \ref{cor1}--\ref{cor4}.

From a geometric viewpoint, \emph{asymptotically conical ends} are of particular interest. We recall the precise definition (see also Section \ref{section-conicalends} for more details).

\begin{defn}\label{asymptconical} 
Let $\X$ be a singular space. An end $E$ of $\X$ relative to some compact subset $K$ is \emph{$C^{k}$-asymptotically conical} if $E\subseteq \reg$ and there exist a Riemannian cone $(\mathcal{C}_{\Sigma},g_c)$, a constant $a>0$ and a diffeomorphism 
$\varphi\colon \mathcal{C}_{\Sigma}(a,\infty)\to E$ such that
\begin{equation}\label{conicalconditionmetric}
\sup_{x\in \mathcal{C}_{\Sigma}(a,\infty)}|\varphi^{\ast}g-g_{c}|(x)<\infty
\end{equation}
and
\begin{equation}\label{conicalcondition}
\lim_{b\to \infty}\sup_{x\in \mathcal{C}_{\Sigma}(b,\infty)}\r(x)^{\ell}\vert\nabla^{\ell}(\varphi^{\ast}g-g_{c})\vert(x)=0
\end{equation}
for all $0\leq \ell\leq k$. 
\end{defn}

In \cite{kw}, Kotschwar-Wang proved that two Ricci shrinkers having ends asymptotic to the same cone are isometric along the ends in question, while they are globally isometric when simply connected. In \cite{mw15,mw19}, Munteanu-Wang relate the behaviour at infinity of curvature with the presence of conical ends. Regarding (smooth or singular) asymptotically conical Ricci shrinkers as singularity models of the Ricci flow, there is interesting progress in flowing through the singularities in a weak sense using for example Ricci flow with conical/orbifold singularities or using expanding solitons asymptotic to the same cone, see for example \cite{simon,GS18,AK22} and references therein.

Our main results regarding conical ends are the following.

\begin{theorem}\label{thm.conicalbound}
Given $n \in \mathbb{N}$ and a finite collection $\h$ of connected compact Riemannian manifolds of dimension $n-1$, there exists $\Lambda=\Lambda(n,\h)$ with the following property. Let $\X$ be a smooth (or limit singular) $n$-dimensional Ricci shrinker and suppose that every asymptotically conical end of $\X$ has asymptotic cone $\mathcal{C}_{\Sigma}$ for some $\Sigma\in \h$. Then $\X$ has at most $\Lambda$ asymptotically conical ends.
\end{theorem}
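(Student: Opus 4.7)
The strategy is a volume comparison at infinity: each asymptotically conical end of $\X$ contributes Euclidean volume growth $r^n$ at infinity with leading coefficient $\Vol(\Sigma)/n$, while the Cao--Zhou/Carrillo--Ni theorem bounds the total volume from above by $C(n,\underline{\mu})\,r^n$. Since $\h$ is finite, $\sigma := \min_{\Sigma\in\h}\Vol(\Sigma) > 0$, and so only boundedly many conical ends can fit into this Euclidean volume budget.

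More concretely, I would begin by fixing an arbitrary finite collection of $N$ pairwise distinct asymptotically conical ends $E_1,\dots,E_N$ of $\X$ relative to a common compact $K \ni p$, with diffeomorphisms $\varphi_i \colon \mathcal{C}_{\Sigma_i}(a_i,\infty) \to E_i$ and $\Sigma_i \in \h$; note that by Definition \ref{asymptconical} each $E_i \subseteq \reg$, so the singular set plays no role here. Condition \eqref{conicalconditionmetric} together with the $\ell = 0$ case of \eqref{conicalcondition} yield, on $\mathcal{C}_{\Sigma_i}(R,\infty)$ for $R$ sufficiently large, both a bi-Lipschitz comparison between $\varphi_i^{\ast} g$ and $g_c$ with constants arbitrarily close to $1$, hence a constant $K_0$ satisfying $|d(p,\varphi_i(x)) - \r(x)| \leq K_0$ for $\r(x) \geq R$, as well as the convergence $\varphi_i^{\ast} d\nu_g / d\nu_{g_c} \to 1$ as $\r \to \infty$. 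Combined with the elementary cone volume formula $\Vol_{g_c}(\mathcal{C}_\Sigma(a,b)) = \Vol(\Sigma)(b^n - a^n)/n$, this gives, for any $\epsilon > 0$ and all sufficiently large $r$,
\begin{equation*}
\Vol\bigl(B(p,r) \cap \reg\bigr) \geq \sum_{i=1}^{N}\Vol\bigl(B(p,r)\cap E_i\bigr) \geq (1-\epsilon)\,\frac{N\sigma}{n}\,r^n.
\end{equation*}

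Next I would invoke the Cao--Zhou/Carrillo--Ni volume growth estimate, which gives $\Vol(B(p,r)\cap\reg) \leq C(n,\underline{\mu})\,r^n$ for $r \geq 1$, both in the smooth case and, via stability under pointed singular Cheeger--Gromov convergence together with the lower volume bound in Definition \ref{singularRS}, in the limit singular case. Comparing with the lower bound and letting $r \to \infty$, $\epsilon \to 0$, I conclude $N \leq C(n,\underline{\mu})\,n/\sigma =: \Lambda$, independent of the chosen finite subcollection, which gives the claimed bound on the total number of asymptotically conical ends. (In parallel with Theorem \ref{thm.endbound} this would presumably also force $\Lambda$ to depend on $\underline{\mu}$.) The main obstacle is the non-uniformity of the conical convergence rate across different ends of a fixed $\X$: the threshold $R$ at which the $\varphi_i$ become quantitatively close to the cone model depends a priori on the particular end. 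This is harmless for the final bound, which does not involve $R$; it is, however, the reason why the argument must be phrased for an arbitrary finite subcollection of ends and passed to the total count only at the end, rather than applied directly to all ends at once.
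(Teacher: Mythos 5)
Your proposal is correct and takes a genuinely different route from the paper's own proof, which is a ball-packing argument: the authors fix a radius $r_0$, extract from each conical end a region that is $C^2$-near to a conical annulus, deduce a uniform non-collapsing bound $\Vol(B(x_i,r_0))\geq c(n)r_0^n$ via Corollary \ref{volumeball}, and then count disjoint balls of radius $r_0$ inside $B(p,r)$ using the Euclidean upper bound \eqref{volumegrowth}. Their constant $\Lambda=\Lambda(n,\h)$ therefore depends on $\sigma=\min_{\Sigma\in\h}\mathrm{inj}(\Sigma)$, whereas yours depends on $\sigma=\min_{\Sigma\in\h}\Vol(\Sigma)$. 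Your volume-at-infinity comparison sidesteps the non-collapsing machinery of Corollary \ref{volumeball} (and the accompanying use of Croke's isoperimetric inequality), and it makes the Euclidean-volume interpretation of the bound more transparent; on the other hand, the paper's local packing argument is what underlies the proof of Theorem \ref{thm.limitconicalbound}, so it buys some reusability. Two small corrections: the Cao--Zhou/Carrillo--Ni upper bound $\Vol(B(p,r))\leq C(n)r^n$ (this is \eqref{volumegrowth} in the paper, valid for limit singular shrinkers by Proposition \ref{prop.limitsingularRS}, Point 2) does not depend on $\underline{\mu}$, so your parenthetical worry that $\Lambda$ should depend on the entropy is unfounded -- both your argument and the paper's give $\Lambda(n,\h)$ with no $\underline{\mu}$ dependence, exactly as stated. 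And for the singular case the relevant input is Point 2 of Proposition \ref{prop.limitsingularRS}, not the lower volume bound in Definition \ref{singularRS}.
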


\begin{theorem}\label{thm.limitconicalbound} 
Let $(M_{i},g_{i},f_{i},p_{i})_{i \in \N}$ be a sequence of pointed, smooth $n$-dimensional Ricci shrinkers converging to a singular 
$n$-dimensional Ricci shrinker $\X$ in the pointed singular Cheeger-Gromov sense. Then
\begin{equation*}\#\mathcal{E}_{c}(\X)\leq \liminf_{i\to \infty}\#\mathcal{E}_{c}(M_{i}).\end{equation*}
\end{theorem}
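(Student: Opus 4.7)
My plan is to show that for each integer $N\le \#\mathcal{E}_{c}(\X)$ (possibly infinite, in which case I work with arbitrary finite $N$), every sufficiently large $i$ satisfies $\#\mathcal{E}_{c}(M_i)\ge N$; letting $N\to \#\mathcal{E}_{c}(\X)$ then gives the inequality. I would fix $N$ distinct asymptotically conical ends $E_1,\dots,E_N$ of $\X$ (all lying in $\reg$ by Definition \ref{asymptconical}), with asymptotic cones $\mathcal{C}_{\Sigma_j}$ and diffeomorphisms $\varphi_j\colon\mathcal{C}_{\Sigma_j}(a_j,\infty)\to E_j$. Choosing $b>\max_j a_j$ large enough, the asymptotic conditions \eqref{conicalconditionmetric}--\eqref{conicalcondition} guarantee that on each truncation $\mathcal{C}_{\Sigma_j}(b,2b)$ the metric $\varphi_j^{\ast}g$ is $\epsilon$-close to $g_c$ in $C^{k}$. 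Enlarging $b$ and picking a compact $K\subset \X$ containing $p$ and a suitable $\varrho$-ball, I may further assume the annuli $A_j:=\varphi_j(\mathcal{C}_{\Sigma_j}(b,2b))$ lie in $N$ distinct connected components of $\X\setminus K$.

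Next I would transfer the configuration to $M_i$. Since $K$ and the $A_j$ are relatively compact in $\reg$, the pointed singular Cheeger-Gromov convergence provides, for $i$ large, a smooth embedding $\Phi_i$ of an open neighbourhood of $K\cup A_1\cup\dots\cup A_N$ into $M_i$ with $\Phi_i^{\ast}g_i\to g$ smoothly on its domain. Setting $A_{i,j}:=\Phi_i(A_j)\subset M_i$, the induced metrics are then $2\epsilon$-close in $C^{k}$ to the cones $\mathcal{C}_{\Sigma_j}(b,2b)$. The partition of the $A_j$ into distinct components of $\X\setminus K$ is preserved by $\Phi_i$, so the $A_{i,j}$ lie in $N$ distinct components of $M_i\setminus \Phi_i(K)$, hence in $N$ distinct ends $\tilde E_{i,1},\dots,\tilde E_{i,N}$ of $M_i$.

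The main obstacle is to promote each $\tilde E_{i,j}$ to an asymptotically conical end; notably, one does not need its asymptotic cone to agree with $\mathcal{C}_{\Sigma_j}$ for the statement of the theorem, but only that some asymptotic cone exists. Here the bounded-curvature hypothesis on $M_i$ is essential: rescaling $\tilde E_{i,j}$ and applying Cheeger-Gromov compactness produces cone limits at infinity, and the shrinker equation forces them to be Ricci-flat. Combined with the closeness of $A_{i,j}\subset \tilde E_{i,j}$ to $\mathcal{C}_{\Sigma_j}$ on a large annular region, uniqueness results for asymptotic cones of shrinkers (Kotschwar-Wang \cite{kw}) and the asymptotic analysis developed in Section \ref{section-conicalends} then identify the asymptotic cone of $\tilde E_{i,j}$ with $\mathcal{C}_{\Sigma_j}$, so that $\tilde E_{i,j}$ is asymptotically conical. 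Therefore $\#\mathcal{E}_{c}(M_i)\ge N$ for all sufficiently large $i$, which gives the claim.
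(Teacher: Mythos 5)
Your overall strategy -- locate nearly conical annular regions on the ends of $\X$, transport them into the late $M_i$ via the Cheeger-Gromov maps, and argue that each such region sits inside a conical end of $M_i$ -- is the same skeleton as the paper's proof. However, the crucial step, namely showing that the ends $\tilde{E}_{i,j}$ of $M_i$ containing the transported annuli $A_{i,j}$ are asymptotically conical, is precisely the hard part, and your justification of it does not work.

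You write that ``rescaling $\tilde{E}_{i,j}$ and applying Cheeger-Gromov compactness produces cone limits at infinity, and the shrinker equation forces them to be Ricci-flat.'' This is false in general: an end of a bounded-curvature Ricci shrinker need not have conical blow-downs. The round cylinder $\R\times\mathbb{S}^{n-1}$ is a bounded-curvature shrinker whose ends stay cylindrical under any rescaling. Bounded curvature plus the shrinker equation gives no asymptotic conicality for free. Moreover, Kotschwar-Wang's rigidity theorem and the material in Section \ref{section-conicalends} (curvature/volume estimates on conical ends) presuppose that the ends are already known to be asymptotically conical; they do not convert a nearly conical annulus into a conical end. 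In short, you invoke the desired conclusion rather than proving it.

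The paper's substitute for this step is Theorem \ref{thm.conicalannulus}: if a bounded-curvature shrinker contains an open set $C^2$-near to a conical annulus sufficiently far from the basepoint, then that set lies in a genuine asymptotically conical end. Its proof is nontrivial and uses Perelman/Lu pseudo-locality (Proposition \ref{pseudolocality}) together with the volume non-collapsing of Corollary \ref{volumeball}, fed into the self-similar Ricci flow $(g_t)$ of the shrinker, to propagate the curvature bound from the annulus forward in time and obtain quadratic curvature decay $|\Rm|(y)\leq \xi^2/\varrho^2(y)$ along the end; this decay then yields asymptotic conicality via the argument of Kotschwar-Wang. Your proposal contains no analogue of this pseudo-locality mechanism, so the central gap remains: you must show that a nearly conical region in $M_i$ forces an asymptotically conical end, and that does not follow from compactness and uniqueness alone. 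Secondary but still necessary details you would also need to supply include: the careful $C^2$-closeness transfer (the paper's Claim \ref{secondo}, which requires comparing covariant derivatives with respect to $g_c$ and $\varphi^\ast g$ as in Appendix \ref{appendix-nearbymetrics}), and the argument that the transported annuli determine genuinely distinct ends of $M_i$ (the paper routes this through the structure of $D_i(\alpha,\beta)$ using Proposition \ref{prop.ends-shrinker}, since $\Phi_i(K)$ alone is not a convenient compact set for separating ends).
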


Note that Theorem \ref{thm.limitconicalbound} does not contain any assumption about critical points of $f$, so Theorem \ref{thm.limitendbound} does not apply and theoretically the sequence might contain ``growing'' regions that form a new end in the limit, but the result says that such a new end can never be asymptotically conical. 

Note also that the theorem allows for fewer asymptotically conical ends in the limit, even in cases where the total number of ends remains constant. This could for example happen if asymptotically conical ends along a sequence ``collapse'' to a cylindrical end in the limit. Numerical evidence of such a scenario happening has been found for the mean curvature flow in \cite{BNS}.

Again, we will prove several applications of Theorem \ref{thm.limitconicalbound} to moduli spaces, see Section \ref{conicalends-applications-section}.

Let us quickly discuss the strategy of the proof of Theorem \ref{thm.limitconicalbound}. Suppose that the limit of a converging sequence of shrinkers has an asymptotically conical end and take a compact set intersecting it in a large region. Since the convergence is on compact subsets, we can find corresponding regions inside the late shrinkers of the sequence that are nearly conical in the following sense.

\begin{defn}\label{nearannulus}
Let $\X$ be a singular space and $\Omega \subseteq \X$ an open subset. We say that $\Omega\subseteq \X$ is \emph{$C^{k}$-near to the conical annulus} $\mathcal{C}_{\Sigma}(a,b)$, where $0<a<b\leq \infty$, if $\Omega\subseteq \reg$ and there exists a diffeomorphism 
$\varphi\colon \mathcal{C}_{\Sigma}(a,b)\to \Omega$ such that
\begin{equation} \label{closenesscondition}
\sup_{\mathcal{C}_{\Sigma}(a,b)}a^{\ell}\vert \nabla^{\ell}(\varphi^{\ast}g-g_{c})\vert \leq \varepsilon 
\end{equation}
for all $0\leq \ell\leq k$, where $\varepsilon \in (0,1/2)$.
\end{defn} 

We then show the following result, which could be of independent interest. A smooth Ricci shrinker containing a nearly conical region far away from the basepoint is asymptotically conical along an end containing this region. More precisely, we prove the following theorem.

\begin{theorem}\label{thm.conicalannulus}
Let $n\in \N$ and let $\Sigma$ be a compact connected Riemannian manifold of dimension $n-1$ with cone 
$\mathcal{C}_{\Sigma}$.
There exists a positive constant $a_{\ast}=a_{\ast}(n,\Sigma)>0$ with the following property.
Let $(M,g,f)$ be a smooth $n$-dimensional Ricci shrinker with basepoint $p\in M$. 
Suppose there exists a connected open subset $\Omega\subseteq M$ such that:
\begin{enumerate}
\item $\Omega$ is $C^{2}$-near to a conical annulus $\mathcal{C}_{\Sigma}(a,b)$ as in Definition \ref{nearannulus};
\item $\Omega$ contains a connected component $V$ of $\{\alpha < \varrho < \beta\} \neq \emptyset$.
\end{enumerate}
If $a_{\ast} \leq a \leq \frac{1}{3}b$ and $a_{\ast} \leq \alpha \leq \frac{1}{3} \beta$ then $M$ is not compact and there exists a unique end $\e\in \mathcal{E}(M)$ such that $V\subseteq \e(\{\varrho \leq \alpha\})$. Moreover $\e$ is asymptotically conical and $\nabla f \neq 0$ on $\e(\{\varrho \leq \alpha\})$.
\end{theorem}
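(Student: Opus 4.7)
My plan is to construct the end $\e$ by integrating $\nabla f$ forward from a suitable level hypersurface inside $V$ and to show the resulting region forms an asymptotically conical end. The $C^{2}$-closeness of $\Omega$ to the cone gives the initial control, and the self-similar structure of the shrinker propagates this control to infinity.

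First I would exploit the shrinker equation on $\Omega$. Since the cone has $|\Ric_c|=O(\r^{-2})$ and the metric on $\Omega$ is $C^{2}$-close to $g_c$, we obtain $|\Ric|=O(a^{-2})$ on $\Omega$. The shrinker equation $\Ric+\nabla^{2}f=\tfrac{1}{2}g$ then gives $\nabla^{2}f\approx \tfrac{1}{2}g$, and combined with the identity $|\nabla f|^{2}+S=f-\mu_{g}$ (so $|\nabla f|^{2}=\tfrac{1}{4}\varrho^{2}-S$), we deduce that $\nabla f$ does not vanish on $\Omega$ once $a_{\ast}$ is chosen large, and that $\varphi^{\ast}\nabla f$ is close to the cone's radial field $\tfrac{1}{2}\r\,\partial_{\r}$. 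Choose an intermediate radius $\alpha'\in(\alpha,\beta/3)$, which exists thanks to $\alpha\leq\beta/3$; then the level hypersurface $\Sigma'=\{\varrho=\alpha'\}\cap V$ is smooth, diffeomorphic to $\Sigma$, and bounds a ``cap'' sitting well inside $V\subseteq\Omega$. The integral flow $\psi_s$ of $\nabla f$ starting at $\Sigma'$ sweeps out a nearly conical subregion of $\Omega$ as long as it remains inside.

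The central task is to continue this flow to all $s\geq 0$ while maintaining near-conicality. For this I would use the self-similar structure of the shrinker: the associated Ricci flow $g(t)=(1-t)\phi_{t}^{\ast}g$, with $\phi_{t}$ generated by $\nabla f/(1-t)$, provides a canonical one-parameter family of rescalings sending a nearly conical annulus at scale $a$ to one at scale $\lambda a$ for $\lambda>1$. The bounded curvature assumption is essential here as it yields Shi-type higher-derivative estimates on the Ricci flow, so that the rescaled metrics not only remain near $g_c$ but in fact converge to $g_c$ in $C^{k}$ as $\lambda\to\infty$. The factor $3$ in the hypothesis $a\leq b/3$, $\alpha \leq \beta/3$ ensures that two consecutively rescaled annuli overlap on a region large enough to glue the local identifications into a single smooth diffeomorphism $\Phi\colon\mathcal{C}_{\Sigma}(a'',\infty)\to E\subseteq M$ satisfying both~\eqref{conicalconditionmetric} and~\eqref{conicalcondition}.

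To conclude, $E$ is connected, open, and contains points with arbitrarily large $\varrho$, so $M$ is non-compact; $\{\varrho\leq\alpha\}$ is compact, and $V\cup E$ is a connected subset of $\{\varrho>\alpha\}$, whose connected component is the desired end $\e$. Uniqueness is immediate from the connectedness of $V$. The end is asymptotically conical by construction, and $\nabla f\neq 0$ on $\e(\{\varrho\leq\alpha\})$ because the gradient nonvanishing propagates along the integral flow throughout $E$, while points near the collar $\{\varrho\approx\alpha\}$ lie inside $V$ where the first-step estimate applies directly. The principal obstacle I anticipate is the extension step: verifying quantitatively that the self-similar rescaling genuinely \emph{improves} the near-conical error, so that the full decay in~\eqref{conicalcondition} is achieved, and not merely the bounded form~\eqref{conicalconditionmetric}.
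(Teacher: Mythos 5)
The opening steps of your proposal are on track: you correctly deduce $|\Rm|=O(a^{-2})$ on $\Omega$ from $C^{2}$-closeness (this is \eqref{curvatureconicalset}), and you correctly use the identity $|\nabla f|^{2}=\tfrac{1}{4}\varrho^{2}-S$ to conclude $\nabla f\neq 0$ on $V$ once $a_{\ast}$ is large. You also correctly identify the overall shape of the argument: flow $\nabla f$ forward from a level set inside $V$, let the self-similar structure do the work, and read off an asymptotically conical end.

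However, the central step of your proposal has a genuine gap, which you yourself flag at the end. You claim that the self-similar rescaling of the associated Ricci flow ``provides a canonical one-parameter family of rescalings sending a nearly conical annulus at scale $a$ to one at scale $\lambda a$,'' and that Shi-type estimates from the bounded-curvature assumption then give $C^{k}$ convergence to $g_{c}$. But the self-similar structure alone does not propagate near-conicality: rescaling preserves the geometry, so if you only know near-conicality on the bounded region $\Omega$ at time $0$, you do not automatically know anything about the metric at larger radii. Some new input is needed to transport the control from $\Omega$ outward, and Shi estimates only convert an a priori curvature bound into higher-derivative bounds; they do not by themselves produce the curvature bound along the flow.

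The paper supplies exactly this missing ingredient through Lu's version of Perelman's pseudo-locality theorem (Proposition \ref{pseudolocality}), applied to balls $B(x_{0},r_{0})$ centred on the level set $L=D(\xi)\cap V$ with $\xi\in(\alpha+r_{0},\beta-r_{0})$. Two hypotheses must be verified: the curvature bound $|\Rm|\leq r_{0}^{-2}$ on such balls (this is what your $C^{2}$-closeness gives), and a volume lower bound $\Vol(B(x_{0},r_{0}))\geq v_{0}r_{0}^{n}$. The volume bound is supplied by Corollary \ref{volumeball}, and your proposal does not mention this volume non-collapsing input at all. Pseudo-locality then yields $|\Rm(y,t)|\leq 1-t$ along the flow, which after rewriting by self-similarity and using the evolution of $f$ gives the clean inequality $|\Rm(y)|\leq\xi^{2}/\varrho^{2}(y)$ on the component $W$ of $D[\xi,\infty)$ swept out by the flow. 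This quadratic curvature decay is then converted into smooth asymptotic conicality by citing the argument in Section 2.2 of Kotschwar--Wang \cite{kw}, rather than by your proposed direct gluing of rescaled annuli. Your interpretation of the factor $3$ (overlap of consecutively rescaled annuli) also differs from its actual role in the paper, which is merely to ensure $(\alpha+r_{0},\beta-r_{0})\neq\emptyset$ so that the pseudo-locality balls fit inside $V$. So: correct skeleton and correct side lemmas, but the obstacle you identify at the end is real, and the tools that close it in the paper (pseudo-locality, the volume non-collapsing of Corollary \ref{volumeball}, and the Kotschwar--Wang characterisation of asymptotically conical shrinkers via quadratic curvature decay) are absent from your proposal.
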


Note that \cite{heatkernel,liwang} contain some results of similar nature.

The article is organised as follows. In Section \ref{section-preliminaries} we collect some preliminary facts about smooth and singular Ricci shrinkers, and recall the notions of convergence we need. Section \ref{sectionends} is devoted to the concept of ends of a space; we give precise definitions and collect some useful properties. In Section \ref{section-estimate-number-ends} we prove Theorems \ref{thm.endbound} and \ref{thm.limitendbound}. Applications of these theorems to moduli spaces of Ricci shrinkers are then obtain in Section \ref{applications-convergence}. 
Section \ref{section-conicalends} concentrates on asymptotically conical ends and their geometric properties as well as nearly conical regions in the sense of Definition \ref{nearannulus}. Section \ref{nearlyconical-section} contains the proof of Theorem \ref{thm.conicalannulus}, while in Section \ref{conicalends-estimate-section} we prove Theorems \ref{thm.conicalbound} and \ref{thm.limitconicalbound}. We then obtain further applications to moduli spaces in Section \ref{conicalends-applications-section}. 
Finally, Appendix \ref{appendixends} contains the proofs of the propositions from Section \ref{sectionends}, while Appendix \ref{appendix-nearbymetrics} is a reference for some useful facts regarding nearby Riemannian metrics used throughout the article. 

\textbf{Acknowledgements.} AB has been fully supported by a Phd studentship from SISSA.
RB has been partially supported by the Italian PRIN project ``Differential-geometric aspects of manifolds via global analysis'' (No.~20225J97H5). The authors also thank the referee for valuable comments.

\section{Preliminaries on smooth and singular Ricci shrinkers}\label{section-preliminaries}

Let $M=(M,g,f)$ be a smooth Ricci shrinker. It is well known that the quantity
\begin{equation}\label{auxilliaryconstant}
c(g)=S+|\nabla f|^{2}-f
\end{equation}
is constant on $M$, see e.g. \cite{chow}. Moreover, the scalar curvature $S$ is always non negative and only vanishes at some point if $M$ is isometric to $\R^{n}$ \cite{rps}. As a consequence, $|\nabla f|^2 \leq f+c(g)$ and hence
\begin{equation}\label{eq.rhoLipschitz}
|\nabla \sqrt{f+c(g)}| \leq \tfrac{1}{2}.
\end{equation}
As stated in the introduction, the growth of the potential $f$ is controlled by the distance from the basepoint $p$ of the shrinker. 
More precisely, one has
\begin{equation}\label{fgrowth}
\frac{1}{4}(d(x,p)-5n)_{+}^{2}\leq f(x)+c(g) \leq \frac{1}{4}(d(x,p)+\sqrt{2n})^{2},
\end{equation}
where  $a_{+}=\max\{a,0\}$. From this, one sees that two different minimum points of $f$ lie at most at distance $5n+\sqrt{2n}$ from each other. Setting $\varrho=2\sqrt{f+c(g)}$, the inequality \eqref{fgrowth} becomes
\begin{equation}\label{rhogrowth}
(d(x,p)-5n)_{+}\leq \varrho(x)\leq d(x,p)+\sqrt{2n}.
\end{equation}
In particular, by \eqref{eq.rhoLipschitz} and \eqref{rhogrowth}, $\varrho$ is $1$-Lipschitz and proper. If $M$ is not flat then $\varrho >0$ and smooth everywhere, while it coincides with $d(x,p)$ if $M=\R^{n}$.

Metric balls with center at $p\in M$ satisfy the volume growth condition
\begin{equation}\label{volumegrowth}
\Vol(B(p,r))\leq C_{0}r^{n},\quad \forall r>0,
\end{equation}
where $C_{0}=C_{0}(n)$ is a positive constant. Estimate \eqref{fgrowth} together with the Euclidean growth \eqref{volumegrowth} imply that the weighted volume of $M$ is finite, $\int_{M}e^{-f}d\nu_{g}<\infty$, and therefore we can normalise $f$ as in \eqref{normalisation}. Under this normalisation the constant $c(g)$ is equal to minus the Perelman entropy of $(M,g)$:
\begin{equation}
-c(g)=\mu(g)=\int_{M}(|\nabla f|^{2}+S+f-n)(4\pi)^{-\frac{n}{2}}e^{-f}d\nu_{g}.
\end{equation} 
Finally, under the additional assumption of an entropy lower bound $\mu(g)\geq \underline{\mu}$, it is possible to prove a volume non-collapsing result. In this case, for every $r>0$ and $0<\delta\leq 1$, if $B(x,\delta)\subseteq B(p,r)$, we have
\begin{equation}\label{noncollapsing}
\Vol(B(x,\delta))\geq \kappa(r)\delta^{n},
\end{equation}
where $\kappa(r)=\kappa(r,n,\underline{\mu})$ is a positive function. Proofs of \eqref{fgrowth}--\eqref{noncollapsing} can be found in \cite{hm}.

Every smooth Ricci shrinker has an associated Ricci flow, which can be constructed as follows. Since $M$ is complete, the vector field $\nabla f$ is complete \cite{zhang} and therefore generates a global flow $\Phi\colon M\times \R \to M$ with associated one-parameter group of diffeomorphisms 
$(\varphi_{t})_{t\in \R}$ satisfying
\begin{equation*}
\left\{
\begin{aligned}
\tfrac{d}{dt}\varphi_{t}(x) &=\nabla f(\varphi_{t}(x)),\\
\varphi_{0}(x) &=x,
\end{aligned}
\right.
\end{equation*}
for every $x\in M$ and $t\in \R$.
Similarly the time-dependent vector field $\frac{1}{1-t}\nabla f$, where $t\in (-\infty,1)$, is complete in the sense that it generates a family of diffeomorphisms 
$(\psi_{t})_{t\in (-\infty,1)}$ satisfying
\begin{equation*}
\left\{
\begin{aligned}
\tfrac{d}{dt}\psi_{t}(x) &=\tfrac{\nabla f(\psi_{t}(x))}{1-t},\\
\psi_{0}(x) &=x,
\end{aligned}
\right.
\end{equation*}
for every $x\in M$ and $t\in (-\infty,1)$. Indeed one can easily pass from one flow to the other, using the relations
\begin{equation}\label{cambioparametro}
\psi_{t}(x)=\varphi_{-\log(1-t)}(x), \quad \forall x\in M,\ \forall t\in (-\infty,1),
\end{equation}
and
\begin{equation}\label{cambioparametro2}
\varphi_{t}(x)=\psi_{1-e^{-t}}(x), \quad \forall x\in M,\ \forall t\in \R.
\end{equation}
We will use these flows throughout the article. Defining the metrics $g_{t}=(1-t)\psi_{t}^{\ast}g$ one can prove that 
$(g_{t})_{t\in (-\infty,1)}$ is an ancient Ricci flow on $M$ and since the metrics defined in this way are self-similar, the quantities depending on them can be expressed in term of $g$ only. Indeed, the $(1,3)$-curvature tensor satisfies $\Rm(x,t)=\psi_{t}^{\ast}\Rm(x)$ and we therefore obtain:
\begin{align*}
|\Rm(x,t)| &=(1-t)^{-1}|\Rm(\psi_{t}(x))|,\\
S(x,t) &=(1-t)^{-1}S(\psi_{t}(x)),\\
B_{t}(x,r) &=\psi_{t}^{-1}\left(B(\psi_{t}(x),r/\sqrt{1-t})\right),
\end{align*}
where the quantities on the left hand side refer to the evolving metric $g_{t}$, while the ones on the right hand side refer to the fixed metric $g$. Finally, the potential $f$ satisfies the evolution equations
\begin{equation}
\frac{d}{dt}(f\circ \varphi_{t})=|\nabla f \circ \varphi_{t}|^{2}
\end{equation}
for every $t\in \R$ and
\begin{equation}\label{evolution2}
\frac{d}{dt}(f\circ \psi_{t})=\frac{|\nabla f\circ \psi_{t}|^{2}}{1-t}
\end{equation}
for every $t\in (-\infty,1)$.\\

In the remainder of this section, we concentrate our attention to singular Ricci shrinkers as in Definition \ref{singularRS}. Some of the above properties for smooth shrinkers generalise to the singular case, with some precautions.

\begin{prop} Let $\X$ be a singular Ricci shrinker. The following hold:
\begin{enumerate}
\item The quantity $c(g)=|\nabla f|^{2}-f+S$ is constant on $\reg$.
\item The function $f$ (defined on $\reg$) can be extended continuously to the entire $\X$, 
attains its minimum at some point $p \in \X$ and satisfies the growth condition:
\begin{equation}\label{fgrowth-singular}
\frac{1}{4}\big(d(x,p)-2\sqrt{f(p)+c(g)}-4n\big)^{2}_{+}\leq f(x)+c(g)\leq \frac{1}{4}\big(d(x,p)+2\sqrt{f(p)+c(g)}\big)^2
\end{equation}
for every $x\in \X$. In particular, $f$ can be normalised as in \eqref{normalisation}.
\item $\varrho=2\sqrt{f+c(g)}$ is $1$-Lipschitz and proper and satisfies
\begin{equation}\label{rhogrowth-singular}
\left(d(x,p)-\varrho(p)-4n\right)_{+} \leq \varrho(x)\leq d(x,p)+\varrho(p)
\end{equation} 
for every $x\in \X$.
\end{enumerate}
\end{prop}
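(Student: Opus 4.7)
The plan is to prove the three assertions in order. Part (1) is a direct computation together with a short connectedness argument; Part (2) is the main content, adapting the Cao-Zhou argument from \cite{hm} to the singular setting via the mildness condition; Part (3) is then a quick corollary.

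For (1), I would verify on $\reg$ the standard identity $\nabla c(g) \equiv 0$. Concretely, tracing \eqref{eq.shrinker} yields $\Delta f = n/2 - S$; taking its divergence and combining with the twice-contracted second Bianchi identity produces $\nabla S = 2\Ric(\nabla f,\cdot)$; together with $\nabla|\nabla f|^{2} = 2\nabla^{2}f(\nabla f,\cdot)$ and the shrinker equation itself, this gives $\nabla c(g) = 2(\Ric + \nabla^{2}f)(\nabla f,\cdot) - \nabla f = 0$. To promote constancy on each component of $\reg$ to global constancy, I would check that $\reg$ is connected under mildness: the set of points in $\reg$ joinable to a fixed $x \in \reg$ by a path in $\reg$ is open (by local Riemannian charts), closed in $\reg$ (same reason), and dense (by mildness, applied to $x$), hence equals all of $\reg$.

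For (2), Part (1) together with $S \geq 0$ gives $|\nabla f|^{2} = f + c(g) - S \leq f + c(g)$, so $f + c(g) \geq 0$ on $\reg$ and $\varrho := 2\sqrt{f+c(g)}$ is smooth where positive with $|\nabla \varrho| \leq 1$. Integrating this bound along minimising geodesics in $\reg$ (available by mildness for a dense family of endpoints) and using Item 2 of Definition \ref{singularRS}, $\varrho$ is $1$-Lipschitz on $(\reg, d_g)$ and hence extends uniquely to a $1$-Lipschitz function on the metric completion $\X$; then so does $f$. The upper bound in \eqref{fgrowth-singular} follows immediately from the 1-Lipschitzness of $\varrho$ applied with the basepoint $p$. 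The lower bound is the core step: for a fixed $q \in \reg$ and $y \in \reg \setminus Q_q$, along a minimising unit-speed geodesic $\gamma : [0,\ell] \to \reg$ from $q$ to $y$, the identity $(f \circ \gamma)'' = 1/2 - \Ric(\dot\gamma,\dot\gamma)$ combined with the second variation of arc length (bounding $\int_0^\ell \Ric(\dot\gamma,\dot\gamma)\,dt \leq n - 1 + O(1/\ell)$) as in Cao-Zhou yields a quadratic lower bound for $f(y) + c(g)$ in $\ell = d(q,y)$; density and continuity then extend the estimate to all of $\X$. In particular $f$ is proper, so its continuous extension attains a minimum at some $p \in \X$ (using Hopf-Rinow for locally compact complete length spaces to pass from bounded closed sublevel sets to compact ones). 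Re-running the argument with $q$ a regular approximation of $p$ and letting it tend to $p$, once more appealing to mildness when $p \in \s$, produces \eqref{fgrowth-singular} with the stated constants. The normalisation \eqref{normalisation} is then available because the Euclidean volume growth from Item 3 of Definition \ref{singularRS} is defeated by the Gaussian $e^{-f}$ factor, yielding finite weighted volume.

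Part (3) is a direct corollary. The $1$-Lipschitz property of $\varrho$ has already been established in the course of proving (2), and \eqref{rhogrowth-singular} follows from \eqref{fgrowth-singular} by taking square roots and noting $\varrho(p) = 2\sqrt{f(p)+c(g)}$. Properness of $\varrho$ follows because its sublevel sets are closed and $d$-bounded in $\X$, hence compact by Hopf-Rinow. The main obstacle in the whole proof is the lower growth estimate in (2): the Cao-Zhou-type ODE computation must be performed on minimising geodesics lying entirely in $\reg$, which is precisely what Point 4 of Definition \ref{singularRS} is designed to provide on a dense set of endpoints, and one must then carefully handle the case where the minimiser $p$ lies in $\s$ by an approximation argument based again on mildness.
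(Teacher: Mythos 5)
Your proposal is correct and follows essentially the same route as the paper's proof: derive the conservation of $c(g)$ on $\reg$, use $S\geq 0$ to get $|\nabla\varrho|\leq 1$ and hence a Lipschitz extension of $\varrho$ and $f$ to $\X$, adapt the Cao--Zhou second-variation estimate along minimising geodesics in $\reg$ (available for a dense set of endpoints by mildness), extend by density and continuity, extract a minimiser $p$ of $f$ by completeness, and read off \eqref{fgrowth-singular} and \eqref{rhogrowth-singular}. One genuine addition you make is the connectedness of $\reg$, which the paper takes for granted when it calls $(\reg,g)$ a connected Ricci shrinker but which does need justification since open dense subsets of connected spaces need not be connected; your argument via mildness (path component $A_x$ of $x$ is open and closed in $\reg$, contains the dense set $\reg\setminus Q_x$, hence equals $\reg$) fills that gap cleanly. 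Everything else -- the use of Hopf--Rinow for properness, the decay of $e^{-f}$ against the Euclidean volume bound for normalisability, and the density/continuity extension of the growth estimate -- matches the paper.
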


\begin{proof}
The first point is obvious, being $(\reg,g)$ a connected Ricci shrinker. In order to prove the other two points, note that from the first point and the assumption $S\geq 0$, we conclude \eqref{eq.rhoLipschitz} or equivalently $|\nabla \varrho|\leq 1$ on $\reg$. Hence $\varrho$ is $1$-Lipschitz with respect to $d_g \equiv d\vert_{\reg}$ and therefore can be extended continuously to the completion $\X$. In particular we can also extend $f$.

Now we follow the proof of Lemma $2.1$ in \cite{hm}. First of all, since $\varrho$ is $1$-Lipschitz we have
\begin{equation}\label{ineq1}
\sqrt{f(x)+c(g)}\leq \frac{1}{2}\big(d(x,y)+ 2\sqrt{f(y)+c(g)}\big)
\end{equation}
for every $x,y\in \X$. Now fix a point $x \in \reg$ and let $Q_x$ be the set of measure zero as in the mildness condition of Definition \ref{singularRS}. For all $y\in \reg\setminus Q_x$, we can consider a minimal geodesic $\gamma\colon [0,d]\to \reg$ connecting $x$ to $y$, where $d=d(x,y)$. Exactly as in \cite{hm}, using the second variation formula for the energy of $\gamma$ and the shrinker equation, one finds
\begin{equation}\label{ineq2}
\frac{1}{2}d(x,y)+\frac{4}{3}-2n \leq \sqrt{f(x)+c(g)}+\sqrt{f(y)+c(g)}+1.
\end{equation}
Since $\reg\setminus Q_x$ is dense, we can extend the preceding inequality first to all $y$ and then to all $x$ in $\X$ by (Lipschitz) continuity.

Now let $(x_{k})$ be a minimising sequence for $f$. Thanks to \eqref{ineq2} it follows that $(x_{k})$ is bounded and therefore sub-converges to a point $p$ of $\X$ by completeness, and this point is a minimum of $f$. Evaluating the inequalities \eqref{ineq1} and \eqref{ineq2} at $p$ we get
\begin{equation*}
\frac{1}{2}\big(d(x,p)-2\sqrt{f(p)+c(g)}-4n\big) \leq \sqrt{f(x)+c(g)} \leq \frac{1}{2}\big(d(x,p)+2\sqrt{f(p)+c(g)}\big),
\end{equation*}
and the desired growth conditions \eqref{fgrowth-singular} and \eqref{rhogrowth-singular} follow. By Point 3 of Definition \ref{singularRS} and the growth condition just obtained, $f$ can be normalised in order to satisfy \eqref{normalisation}. Finally $\varrho$ is proper, thanks to \eqref{rhogrowth-singular} and since $\X$ is compactly bounded, being a complete length space.
\end{proof}

Several of our results require a particular notion of convergence. We start recalling the definition of $\varepsilon$-approximation.

\begin{defn}\label{approssimazione} 
A map $f\colon (X,d_{X},p)\to (Y,d_{Y},q)$ between pointed metric spaces is an $\varepsilon$-pointed Gromov-Hausdorff approximation ($\varepsilon$-pGHA) if it is quasi-isometric and quasi-onto in the following sense:
\begin{enumerate}
\item $|d_{Y}(f(x),f(y))-d_{X}(x,y)|<\varepsilon$ for every $x,y \in B_{X}(p,1/\varepsilon)$;
\item for every $y\in B_{Y}(q,1/\varepsilon)$ there exists $x\in B_{X}(p,1/\varepsilon)$ with $d_{Y}(f(x),y)<\varepsilon$.
\end{enumerate}
\end{defn}

We are ready to give the precise definition we need. 

\begin{defn}\label{def.singularCG} 
A sequence $(M_{i},g_{i},f_{i},p_{i})_{i \in \N}$ of pointed, $n$-dimensional smooth Ricci shrinkers converges to a pointed, $n$-dimensional singular Ricci shrinker $(\X,d,\s,g,f,p)$ in the pointed singular Cheeger-Gromov sense if
\begin{enumerate}
\item there exists an exhaustion $(U_{i})_{i \in \N}$ of the regular part $\reg$ and a family of embeddings $\phi_{i}\colon U_{i}\to M_{i}$
such that $(\phi_{i}^{\ast}g_{i},\phi_{i}^{\ast}f_{i})$ converges to $(g,f)$ in $C^{\infty}_{\text{loc}}(\reg)$;
\item the maps $\phi_{i}\colon U_{i}\to M_{i}$ can be extended to $\varepsilon_{i}$-pGHA between $(\X,d,p)$ and $(M_{i},d_{g_{i}},p_{i})$ 
with $\varepsilon_{i}\to 0$. 
\end{enumerate}
\end{defn}

Singular shrinkers arising as limits of smooth Ricci shrinkers as in the previous definition will be called \emph{limit singular Ricci shrinkers}. Note that their basepoint $p$ and potential $f$ come naturally from the sequence. In this case we can strengthen the previous proposition.

\begin{prop}\label{prop.limitsingularRS}
Let $\X$ be a limit singular Ricci shrinker. Then $f$ attains its minimum at $p$ and the inequalities
\eqref{fgrowth-singular}--\eqref{rhogrowth-singular} can be improved to \eqref{fgrowth}--\eqref{rhogrowth}
for every $x\in \X$. Moreover the distance between two minimum points of $f$ is bounded by $5n+\sqrt{2n}$. If in addition the sequence converging to $\X$ satisfies the entropy lower bound $\mu_{i}\geq \underline{\mu}$ then:
\begin{enumerate}
\item $f$ is normalised, that is $\int_{\reg}e^{-f}d\nu_{g}=(4\pi)^{\frac{n}{2}}$;
\item $\Vol(B(p,r)\cap \reg)\leq C_{0}r^{n}$ for every $r>0$, where $C_{0}$ is as in \eqref{volumegrowth};
\item $\Vol(B(x,\delta)\cap \reg)\geq \kappa(r) \delta^{n}$ for every $B(x,\delta)\subseteq B(p,r)$, where $\delta \in (0,1]$ and
$\kappa$ is as in \eqref{noncollapsing}.
\end{enumerate}
\end{prop}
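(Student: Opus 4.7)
The plan is to obtain the sharper conclusions for the limit singular Ricci shrinker by transferring the corresponding smooth estimates \eqref{fgrowth}--\eqref{rhogrowth}, \eqref{volumegrowth} and \eqref{noncollapsing} from the approximating sequence $(M_i,g_i,f_i,p_i)$ along the singular Cheeger-Gromov convergence of Definition \ref{def.singularCG}. The key observation is that all the constants appearing in these smooth estimates depend only on $n$ (and, for the non-collapsing part, on $\underline{\mu}$), and hence survive the limiting procedure unchanged.

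First I would fix $x \in \reg$ and, for $i$ large enough that $x \in U_i$, set $x_i = \phi_i(x) \in M_i$. By the $C^\infty_{\text{loc}}(\reg)$ convergence of $(\phi_i^{\ast} g_i, \phi_i^{\ast} f_i)$ to $(g,f)$ one has $f_i(x_i) \to f(x)$ and, since $c(\cdot) = |\nabla f|^{2}+S-f$ is constant on each connected shrinker, $c(g_i) \to c(g)$ (the convergence at any single point in $\reg$ suffices). The $\varepsilon_{i}$-pGHA property together with the standard convention that basepoints are approximately preserved ($d_{g_i}(\phi_i(p),p_i) \to 0$) yields $d_{g_i}(x_i,p_i) \to d(x,p)$. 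Passing \eqref{fgrowth} from $M_i$ to the limit pointwise then gives \eqref{fgrowth} on $\reg$, which extends to all of $\X$ by the continuity of $f$ and of $d(\cdot,p)$ already established in the previous proposition together with the density of $\reg$. Inequality \eqref{rhogrowth} follows by taking square roots. Specialising \eqref{fgrowth} to $x = p$ gives $0 \leq f(p)+c(g) \leq n/2$; combined with the uniform Lipschitz bound for $f_{i}$ near $p_{i}$ that follows from $|\nabla f_i|^{2} \leq f_i + c(g_i) \leq n/2$, this forces $f_i(p_i) \to f(p)$, so $p$ is actually a minimum of $f$. For any other minimum point $q$ one then has $0 \leq f(q)+c(g) \leq f(p)+c(g) \leq n/2$, and \eqref{fgrowth} yields $d(q,p) \leq 5n+\sqrt{2n}$.

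Under the entropy lower bound $\mu_{i} \geq \underline{\mu}$, the normalisation $\int_{\reg} e^{-f} d\nu_{g} = (4\pi)^{n/2}$ is obtained by splitting each smooth integral $\int_{M_i}(4\pi)^{-n/2} e^{-f_i} d\nu_{g_i}=1$ into the piece over $\phi_{i}(U_{i})$ and its complement. On $\phi_{i}(U_{i})$, the $C^\infty_{\text{loc}}$ convergence together with the uniform Gaussian decay $e^{-f_{i}(y)} \leq e^{c(g_{i})} e^{-\frac{1}{4}(d_{g_{i}}(y,p_{i})-5n)_{+}^{2}}$ provided by \eqref{fgrowth}, and dominated convergence, give convergence to $\int_{\reg}(4\pi)^{-n/2} e^{-f} d\nu_{g}$. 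On the complement, the same Gaussian decay combined with the Euclidean volume growth \eqref{volumegrowth} and the fact that $\phi_{i}(U_{i})$ exhausts arbitrarily large balls around $p_{i}$ forces the tail contribution to vanish. The upper bound $\Vol(B(p,r) \cap \reg) \leq C_{0} r^{n}$ is obtained analogously by writing $B(p,r)\cap \reg$ as an increasing union of the sets $\phi_{i}^{-1}(B_{g_{i}}(p_{i},r(1+o(1)))) \cap U_{i}$ and passing \eqref{volumegrowth} to the limit, and the non-collapsing bound is inherited by the same comparison applied to balls $B(x,\delta) \subseteq B(p,r)$.

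The main obstacle will be the normalisation step, because it concerns a \emph{global} integral over the non-compact space $\X$, whereas the Cheeger-Gromov convergence is purely local and the singular set could in principle accumulate toward infinity. Handling this cleanly requires combining the uniform Gaussian decay provided by the smooth shrinker inequalities with the Ahlfors-regularity built into Definition \ref{singularRS} (which ensures that $\s$ carries no volume and that the approximating regions $\phi_{i}(U_{i})$ capture all the mass in the limit). The pointwise growth inequalities themselves are, by contrast, a direct passage to the limit once the basepoint correspondence $\phi_{i}(p) \approx p_{i}$ is in place.
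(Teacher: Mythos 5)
Your treatment of the first part (transferring the growth inequalities and identifying $p$ as a minimum) and of Points 1 and 2 is essentially the same as the paper's: pointwise convergence on $\reg$ via the maps $\phi_i$, the basepoint correspondence from the pGHA property, uniform continuity of $f$ to extend to $\X$, interior exhaustion by compact sets for the volume upper bound, and Gaussian decay for the normalisation (which the paper outsources to Proposition~8.8 of~\cite{LLW}). One minor slip: you write $\phi_i(p)$, but $\phi_i$ is only defined on $U_i \subseteq \reg$ and $p$ may well lie in $\s$; the paper works with the pGHA extension $\widehat{\phi}_i$ instead, and for the minimum-point argument uses the $1$-Lipschitz property of $\varrho_i$ along a sequence $x_k \to p$ in $\reg$ rather than a Lipschitz bound on $f_i$ itself.

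There is, however, a genuine gap in Point~3. You assert that the non-collapsing lower bound ``is inherited by the same comparison applied to balls $B(x,\delta)\subseteq B(p,r)$'', but the interior exhaustion that works for the upper bound in Point~2 does \emph{not} give a lower bound. For a compact $K\subseteq B(x,\delta)\cap\reg$ one gets $\Vol_g(K)\approx \Vol_{g_i}(\phi_i(K))$, but $\Vol_{g_i}(\phi_i(K))$ could be much smaller than $\Vol_{g_i}(B_{g_i}(x_i,\delta))$: the portion of $B_{g_i}(x_i,\delta)$ not captured by $\phi_i(U_i)$ (the part ``converging to $\s$'') could carry a definite fraction of the volume, and nothing in the Cheeger--Gromov convergence or in the Ahlfors-regularity of the limit prevents this. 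The paper resolves exactly this point using the covering estimate of Proposition~8.2 in~\cite{LLW}: $\n_{\varepsilon}(\s)\cap B(p,r+1)$ is covered by at most $N\leq C\varepsilon^{3-n}$ balls of radius $3\varepsilon$, so (after pushing forward with $\widehat\phi_i$ and using the smooth volume bound for small balls) the volume of the $\varepsilon$-neighbourhood of $\widehat\phi_i(\s)$ inside $B(x_i,\delta)$ is $\leq C\varepsilon^3$, and one then lets $i\to\infty$ with $\varepsilon$ fixed before sending $\varepsilon\to 0$. Without this quantitative smallness of the volume near $\s$ in the \emph{approximating} spaces $M_i$ — which does not follow from Ahlfors-regularity of the limit alone but from the codimension-four structure theory of LLW — the transfer of the smooth non-collapsing estimate is not justified.
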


\begin{proof}
Essentially, all results follow by smooth convergence on $\reg$, but for completeness we give a detailed proof here. Let $\X=(\X,d,\s,g,f,p)$ be the limit of a sequence $(M_{i},g_{i},f_{i},p_{i})_{i \in \N}$. Now for every $x\in \X$, Point 2 in Definition \ref{approssimazione} implies that $d(\widehat{\phi}_{i}(x),p_{i})\to d(x,p)$ as $i\to \infty$, where $\widehat{\phi}_{i}$ is the extension of $\phi_{i}$ to an $\varepsilon_{i}$-pGHA. For a fixed $x\in \reg$ we have $x\in U_{i}$ for $i$ large enough and
\begin{equation*}
\frac{1}{4}(d(\phi_{i}(x),p_{i})-5n)_{+}^{2}\leq f_{i}(\phi_{i}(x))+c(g_{i}) \leq \frac{1}{4}(d(\phi_{i}(x),p_{i})+\sqrt{2n})^{2}.
\end{equation*}
Passing to the limit in the preceding inequality, noting that the auxiliary constants $c(g_{i})$ converge to $c(g)$ thanks to the smooth convergence on $\reg$, we get \eqref{fgrowth} or equivalently \eqref{rhogrowth} on $\reg$. Since $\reg$ is dense in $\X$, these estimates extend by uniform continuity to the whole of $\X$.

To see that $p\in \X$ is actually a minimum point for $f$, let $(x_{k})\subseteq \reg$ be a sequence converging to $p$. If $k\in \N$ is fixed then $x_{k}\in U_{i}$ for $i$ large and since every $\varrho_{i}$ is $1$-Lipschitz we have
\begin{equation*}
\varrho_{i}(\phi_{i}(x_{k})) - d(\phi_{i}(x_{k}),p_{i}) \leq \varrho_{i}(p_{i})\leq \varrho_{i}(\phi_{i}(x_{k})) + d(\phi_{i}(x_{k}),p_{i}).
\end{equation*}
Taking limits, first in $i$ and then in $k$, we get
\begin{equation*}
\lim_{i\to \infty}\varrho_{i}(p_{i})=\varrho(p).
\end{equation*}
This obviously implies $f_{i}(p_{i})\to f(p)$ and it is easy to conclude. Finally the statement about the distance between minimum points is a simple consequence of \eqref{fgrowth}.

Now we work under the additional condition $\mu(g_{i})\geq \underline{\mu}$ for every $i \in \N$.
\begin{enumerate}
\item This follows from the smooth convergence on $\reg$ and decay estimates for $e^{-f}$. See Proposition $8.8$ in \cite{LLW} for a detailed proof.

\item Consider the metric ball $B(p,r)$ and let $K\subseteq B(p,r)\cap \reg$ be a compact subset. 
Then $K\subseteq U_{i}$ for $i$ large and we can consider $\phi_{i}(K)\subseteq M_{i}$. 
It is easy to see that $\phi_{i}(K)\subseteq B(p_{i},r+\varepsilon_{i})$ and therefore
$\Vol_{g_{i}}(\phi_{i}(K))\leq C_{0}(r+\varepsilon_{i})^{n}$, thanks to \eqref{volumegrowth}. Now
\begin{equation*}
\Vol_{g}(K)\leq \vert \Vol_{g}(K)- \Vol_{\phi_{i}^{\ast}g_{i}}(K)\vert + \Vol_{g_{i}}(\phi_{i}(K))
\leq C(K)|g-\phi_{i}^{\ast}g_{i}|_{g}+C_{0}(r+\varepsilon_{i})^{n}.
\end{equation*}
Taking the limit as $i\to \infty$ we get $\Vol_{g}(K)\leq C_{0}r^{n}$. Since the Riemannian measure on $\reg$ is regular, 
taking the supremum over $K$ we obtain $\Vol_{g}(B(p,r)\cap \reg)\leq C_{0}r^{n}$, as desired.

\item Denote by $\n_{\varepsilon}(A)$ and $\overline{\n}_{\varepsilon}(A)$ the open and the closed $\varepsilon$-neighbourhoods of a set $A$, respectively. Consider $B(x,\delta)\subseteq B(p,r)$, where $\delta \in (0,1]$, and fix $\varepsilon>0$ small such that  $B(x,\delta)\setminus \overline{\n}_{\varepsilon/2}(\s)$ is not empty. Since $\overline{B(p,r)}\setminus \n_{\varepsilon/4}(\s)$ is compact and contained in $\reg$ we have $\overline{B(p,r)}\setminus \n_{\varepsilon/4}(\s)\subseteq U_{i}$ for $i$ large. We can also suppose $\varepsilon_{i}<\min\{\varepsilon/10, \delta/5, 1/2r \}$ for such $i$. An argument similar to the one we will see in Claim \ref{cc} below shows that
\begin{equation*}
B(x_{i},\delta-5\varepsilon_{i}) \setminus \overline{\n}_{\varepsilon}\big(\widehat{\phi}_{i}(\s)\big)
\subseteq \phi_{i}\big(B(x,\delta)\setminus \overline{\n}_{\varepsilon/2}(\s)\big),
\end{equation*}
where $x_{i}=\widehat{\phi}_{i}(x)$. Therefore, we have 
\begin{align*}
\Vol_{g}(B(x,\delta)\cap \reg)
&\geq \big(\Vol_{g}\big(B(x,\delta)\setminus \overline{\n}_{\varepsilon/2}(\s)\big)
-\Vol_{\phi_{i}^{\ast}g_{i}}\big(B(x,\delta)\setminus \overline{\n}_{\varepsilon/2}(\s)\big) \big) \\
&\quad+\Vol_{\phi_{i}^{\ast}g_{i}}\big(B(x,\delta)\setminus\overline{\n}_{\varepsilon/2}(\s)\big).
\end{align*}
The first term can be estimated by
\begin{equation*}
\vert \Vol_{g}(B(x,\delta)\setminus \overline{\n}_{\varepsilon/2}(\s))
-\Vol_{\phi_{i}^{\ast}g_{i}}(B(x,\delta)\setminus \overline{\n}_{\varepsilon/2}(\s))\vert
\leq C_{\varepsilon}|\phi_{i}^{\ast}g-g|_{g},\end{equation*}
where $C_{\varepsilon}>0$ is a constant depending only on $B(x,\delta)\setminus \overline{\n}_{\varepsilon/2}(\s)$. Regarding the second term, write
\begin{align*}
\Vol_{\phi_{i}^{\ast}g_{i}}\big(B(x,\delta)\setminus\overline{\n}_{\varepsilon/2}(\s)\big)
&\geq Vol_{g_{i}}\big(B(x_{i},\delta-5\varepsilon_{i})\setminus \overline{\n}_{\varepsilon}\big(\widehat{\phi}_{i}(\s)\big)\big) \\
&=\Vol_{g_{i}}(B(x_{i},\delta-5\varepsilon_{i})) -\Vol_{g_{i}}\big(\overline{\n}_{\varepsilon}\big(\widehat{\phi}_{i}(\s)\big)\cap B(x_{i},\delta-5\varepsilon_{i})\big).
\end{align*}
First of all, observe that
\begin{equation*}
\overline{\n}_{\varepsilon}\big(\widehat{\phi}_{i}(\s)\big)\cap B(x_{i},\delta-5\varepsilon_{i})\subseteq 
\n_{\varepsilon}\big(\widehat{\phi}_{i}\big(\s\cap B(p,r+1)\big)\big).
\end{equation*}
In the proof of Proposition $8.2$ in \cite{LLW}, it is shown that $\n_{\varepsilon}(\s)\cap B(p,r+1)$ can be covered by at most 
$N$ balls of radius $3\varepsilon$, 
\begin{equation*}
\n_{\varepsilon}(\s)\cap B(p,r+1) \subseteq \bigcup_{\alpha=1}^{N}B(y_{\alpha},3\varepsilon),
\end{equation*}
where $N\leq C\varepsilon^{3-n}$ and $C$ is a positive constant depending only on $n$ and $r$. In particular
\begin{equation*}
\s \cap B(p,r+1) \subseteq \bigcup_{\alpha=1}^{N}B(y_{\alpha},3\varepsilon)
\end{equation*}
and it follows that 
\begin{equation*}
\n_{\varepsilon}\big(\widehat{\phi}_{i}(\s \cap B(p,r+1))\big)\subseteq \bigcup_{\alpha=1}^{N}B(y_{\alpha}^{i},5\varepsilon),
\end{equation*}
where $y_{\alpha}^{i}=\widehat{\phi}_{i}(y_{\alpha})$. Therefore
\begin{align*}
\Vol_{g_{i}}\big(\overline{\n}_{\varepsilon}\big(\widehat{\phi}_{i}(\s)\big)\cap B(x_{i},\delta-5\varepsilon_{i})\big)
&\leq \Vol \big(\n_{\varepsilon}\big(\widehat{\phi}_{i}(\s \cap B(p,r+1))\big)\big)
\leq \sum_{\alpha=1}^{N}\Vol\left(B(y_{\alpha}^{i}, 5\varepsilon)\right)\\
&\leq N 5^{n}\varepsilon^{n} e^{10(r+11n)\varepsilon}\omega_{n}\leq C\varepsilon^{3},
\end{align*}
where we used Lemma $2.3$ from \cite{LLW} and $C>0$ depends only on $n$ and $r$. 
At this point, putting everything together, we get
\begin{equation*}
\Vol_{g}(B(x,\delta)\cap \reg)\geq 
\kappa(r)(\delta-5\varepsilon_{i})^{n}-C\varepsilon^{3}-C_{\varepsilon}|\phi_{i}^{\ast}g-g|_{g},
\end{equation*}
since $B(x_{i},\delta-5\varepsilon_{i})\subseteq B(p_{i},r)$. Taking the limit as $i\to \infty$, with $\varepsilon$ fixed, we obtain 
\begin{equation*}
\Vol_{g}(B(x,\delta)\cap \reg)\geq \kappa(r)\delta^{n}-C\varepsilon^{3},
\end{equation*}
and sending $\varepsilon$ to $0$ the claim follows. \qedhere
\end{enumerate}
\end{proof}

\section{Definitions and basic properties of ends of Ricci shrinkers}\label{sectionends}

We first recall some definitions and facts about ends of a general topological space. Despite being standard results (often implicitly used in articles studying noncompact spaces), we give very precise statements as they seem to be rather difficult to find in the literature. To keep the presentation compact, we moved the proofs of all propositions of this section to Appendix \ref{appendixends}.

Let $X$ be a connected, locally path connected, locally compact, second countable Hausdorff topological space and denote by 
$\mathcal{K}_{X}$ the collection of its compact subsets. 

\begin{defn}\label{def.end1} 
An \emph{end of $X$} is a function $\e \colon \mathcal{K}_{X}\to \mathcal{P}(X)$ that assigns to every $K\in \mathcal{K}_{X}$ a connected component $\e(K)$ of $X\setminus K$ and that satisfies $\e(H)\subseteq \e(K)$ if $K\subseteq H$. 
The set of ends of $X$ is denoted by $\mathcal{E}(X)$.
\end{defn}

In research articles on geometric analysis, one can also find the following definition (see e.g. \cite{li}).

\begin{defn}\label{def.end2} 
Let $K\in \mathcal{K}_{X}$. An \emph{end of $X$ relative to $K$} is a non relatively compact connected component
of $X\setminus K$. We denote by $\mathcal{E}(K,X)$ the set of ends relative to $K$.
\end{defn}

Since we are going to use both of the preceding definitions, it is useful to list their main properties and relations.

\begin{prop}\label{prop.ends-general} 
The following hold.
\begin{enumerate}
\item If $\e\in \mathcal{E}(X)$ then $\e(K)\in \mathcal{E}(K,X)$ for every $K\in \mathcal{K}_{X}$.
\item If $X$ is not compact then for every $K\in \mathcal{K}_{X}$ there exists at least one end relative to $K$. In particular $X$ is compact if and only if $\mathcal{E}(X)$ is empty.
\item Given $K\subseteq H$ compact subsets, the natural map $\mathcal{E}(H,X) \to \mathcal{E}(K,X)$ induced by the inclusion is onto.
\item Given an exhaustion $(K_{i})_{i\in \N}$ of $X$ by compact subsets, the map
\begin{align*}
\mathcal{E}(X) &\to \{(U_{i})_{i \in \N}\in \Pi_{i\in \N}\pi_{0}(X\setminus K_{i})\colon U_{i}\supseteq U_{i+1},\ \forall i \in \N\}\\
\e&\mapsto (\e(K_{i}))_{i\in \N}
\end{align*}
is a bijection. 
\item Let $E$ be an end relative to $K\subseteq X$. Then there exists $\e\in \mathcal{E}(X)$ such that $\e(K)=E$. In particular, if for every $H\supseteq K$ compact there exists a unique end relative to $H$ contained in $E$ then $\e$ is unique.
\end{enumerate}
\end{prop}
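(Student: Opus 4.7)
The overall strategy is to verify each item in turn, moving from the most local (identification with a connected component) to the global (parametrisation of $\mathcal{E}(X)$ via an exhaustion), and using each part to bootstrap the next. A standing tool will be the following consequence of local compactness and the connectedness of $X$: any non-empty open set $U \subseteq X$ whose closure is compact satisfies $\overline{U} \cup K$ compact for every compact $K$, which is what allows us to push obstructions up to larger compact sets.

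For Part 1, I would argue by contradiction: if $\e(K) \in \mathcal{E}(X)$ were relatively compact in $X$, set $H := K \cup \overline{\e(K)}$, which is compact. Then any component of $X\setminus H$ contained in $\e(K)$ would have to sit inside $\e(K)\setminus \overline{\e(K)} = \emptyset$, contradicting the fact that $\e(H)\subseteq \e(K)$ is a (non-empty) component of $X\setminus H$. For Part 2, suppose for contradiction that every component of $X\setminus K$ is relatively compact. Taking a compact neighbourhood $L$ of $K$ (via local compactness) and analysing the components of $X\setminus K$ that meet $X\setminus L$ versus those contained in the compact set $L\setminus K$, one obtains that $X$ itself is a union of $K$ together with relatively compact pieces controlled by a compact set, forcing $X$ to be compact, a contradiction. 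The reverse implication (no ends when compact) is immediate since $\mathcal{K}_X$ contains $X$ itself.

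Part 3 follows from a direct application of Part 2 applied to the end $E$ itself (viewed as an open, locally compact, connected space in its own right if needed, or more simply by working inside $X$): $E\setminus H$ is open in $X\setminus H$ and decomposes into components of $X\setminus H$; if all of them were relatively compact, then as in Part 2 one would conclude that $E \subseteq H\cup (E\setminus H)$ is relatively compact, contradicting $E \in \mathcal{E}(K,X)$. At least one such component $E'$ of $X\setminus H$ contained in $E$ is therefore in $\mathcal{E}(H,X)$, and it maps to $E$ under the natural inclusion-induced map. Part 4 is now formal: injectivity uses that any compact $K$ is contained in some $K_i$, and $\e(K_i)\subseteq \e(K)$ together with connectedness of $\e(K_i)$ forces $\e(K)$ to be the unique component of $X\setminus K$ containing $\e(K_i)$, so $\e$ is determined by $(\e(K_i))_i$; for surjectivity, given a compatible sequence $(U_i)$, define $\e(K)$ as the component of $X\setminus K$ containing $U_i$ for any (hence all, by compatibility) $i$ with $K\subseteq K_i$, and check the monotonicity $\e(H)\subseteq \e(K)$ by the same argument.

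Finally, Part 5 is obtained by combining Parts 3 and 4. Choose an exhaustion $(K_i)_{i\in\N}$ of $X$ by compact sets with $K_0 = K$; by repeated application of Part 3, pick $E_{i+1} \in \mathcal{E}(K_{i+1}, X)$ with $E_{i+1}\subseteq E_i$, starting from $E_0 = E$. Each $E_i$ lies in a unique component of $X\setminus K_i$, giving a compatible sequence and hence, via Part 4, an end $\e\in \mathcal{E}(X)$ with $\e(K_0) = E$. Under the extra hypothesis that every $H\supseteq K$ compact admits a unique end relative to $H$ contained in $E$, the choices $E_i$ are forced at every stage, so the resulting $\e$ is unique. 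The main obstacle in the whole plan is the topological finiteness argument underlying Parts 2 and 3, where one must rule out the pathological situation of infinitely many relatively compact components accumulating; this is where one genuinely needs local compactness (together with the implicit local connectedness coming from the $X$ we will apply the proposition to, namely smooth manifolds and singular spaces of Definition \ref{singularRS}) to control the components meeting the boundary of a compact neighbourhood.
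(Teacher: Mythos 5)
Your proposal follows the same overall strategy as the paper's proof, and each step is essentially the argument the paper gives: Part 1 via $H=K\cup\overline{\e(K)}$, Part 2 and Part 3 via the topological finiteness argument for relatively compact components, Part 4 by determining an end from its values on an exhaustion, Part 5 by recursively choosing nested ends using Part 3 and then invoking Part 4.

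The one place where your sketch is thinner than the paper's is exactly where you yourself flag the obstacle: the finiteness/accumulation argument in Parts 2--3. The paper isolates this as a separate claim (if every member of a family of connected components of $X\setminus K$ is relatively compact, then so is their union), proved by taking a relatively compact open $U\supseteq K$, observing that every such component either lies in $U$ or meets $\partial U$, and that the components meeting $\partial U$ cut out pairwise disjoint non-empty \emph{clopen} subsets of the compact set $\partial U$, hence are finitely many. Your description (``analysing the components... that meet $X\setminus L$ versus those contained in $L\setminus K$'') gestures at the same dichotomy but does not spell out why the first family is finite; making this precise is the content of the paper's Claim A.1. You are also right that this is where local compactness is used together with local connectedness (to guarantee that components of open subsets are open); the paper uses this implicitly, while you make it explicit, which is actually a careful observation since the Proposition's hypotheses only say ``connected, locally compact Hausdorff'' but the proof needs more in order to conclude that components are open. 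One small caution: your parenthetical suggestion to ``apply Part 2 to $E$ itself'' does not quite work as stated, because the ends of $E$ as an abstract space need not correspond to components of $X\setminus H$ in $X$; your alternative (``working inside $X$'' with the finiteness lemma applied to $E\setminus H$), which is what the paper does, is the correct route.
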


There are now at least two ways to count the ends of a space. The first one consists in considering $\#\mathcal{E}(X)$ while
the second one uses Definition \ref{def.end2}, as we now explain. Given $K\in \mathcal{K}_{X}$ the set $\mathcal{E}(X,K)$ is at most countable and we can denote by $n(K)$ its cardinality, obtaining a non-decreasing function $n(\cdot)$. Indeed, we have the following.

\begin{prop}\label{numerofinitofini} 
The function $K\mapsto n(K)$ is non-decreasing with respect to inclusion and attains its supremum when bounded. If the supremum is realised by some $K$ then $n(H)=n(K)$ for every $H\supseteq K$ compact. Finally we always have
\begin{equation*}
\#\mathcal{E}(X)=\sup_{K\in \mathcal{K}_{X}}n(K).
\end{equation*}
\end{prop}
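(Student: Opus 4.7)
The plan is to reduce every claim to the surjectivity statement of Proposition \ref{prop.ends-general}(3). First I would check monotonicity: for any compact $K\subseteq H$, the inclusion $X\setminus H\subseteq X\setminus K$ sends each connected component of $X\setminus H$ into the unique component of $X\setminus K$ containing it, and a non relatively compact component is sent to a non relatively compact one (as the latter contains the former). This gives a well defined map $\mathcal{E}(H,X)\to \mathcal{E}(K,X)$, which is surjective by Proposition \ref{prop.ends-general}(3), and hence $n(K)\leq n(H)$. Since $n$ takes values in $\mathbb{N}\cup\{\infty\}$, if $\sup_{K} n(K)$ is finite it is automatically realised by some compact $K_0$. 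If $K_0$ realises the supremum $N$, then for any compact $H\supseteq K_0$ monotonicity forces $N = n(K_0)\leq n(H)\leq N$, so $n(H) = n(K_0)$ and the induced surjection $\mathcal{E}(H,X)\to \mathcal{E}(K_0,X)$ is a map between finite sets of the same cardinality, hence a bijection. This bijection property is the key input for the last assertion.

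For the identity $\#\mathcal{E}(X)=\sup_K n(K)$ I would split into two cases. If $\sup_K n(K) = N <\infty$, I pick an exhaustion $(K_i)_{i\in \mathbb{N}}$ of $X$ by compact sets with $K_0\subseteq K_1$; by the previous step every transition map $\mathcal{E}(K_{i+1},X)\to \mathcal{E}(K_i,X)$ is a bijection between $N$-element sets, so by Proposition \ref{prop.ends-general}(4) the set $\mathcal{E}(X)$ is in bijection with the inverse limit of this system, which has exactly $N$ elements. If instead $\sup_K n(K) = \infty$, then for every $m\in \mathbb{N}$ I can pick a compact $K$ and $m$ pairwise distinct elements $U_1,\dots,U_m\in \mathcal{E}(K,X)$; Proposition \ref{prop.ends-general}(5) produces ends $\e_1,\dots,\e_m\in \mathcal{E}(X)$ with $\e_j(K)=U_j$, and these are pairwise distinct by construction, giving $\#\mathcal{E}(X)\geq m$ for all $m$.

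The only non-routine point is the bookkeeping in the finite-supremum case: once the value $N$ is reached at $K_0$, one must verify that every further surjection $\mathcal{E}(H,X)\to \mathcal{E}(K_0,X)$ is actually a bijection, so that the inverse limit describing $\mathcal{E}(X)$ via Proposition \ref{prop.ends-general}(4) cannot collapse below $N$; this is precisely what monotonicity together with the equality $n(H)=n(K_0)$ guarantees. Everything else is a direct application of parts (3)--(5) of Proposition \ref{prop.ends-general}.
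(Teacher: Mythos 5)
Your proof is correct and follows essentially the same route as the paper: monotonicity via the surjection of Proposition \ref{prop.ends-general}(3), attainment of the supremum because $n$ is integer-valued, and then the observation that once $n$ stabilises on $K_0$ all further surjections $\mathcal{E}(H,X)\to\mathcal{E}(K_0,X)$ are bijections, which combined with parts (4) and (5) of Proposition \ref{prop.ends-general} yields $\#\mathcal{E}(X)=n^{\ast}$. The paper packages the finite case as the map $\mathcal{E}(X)\to\mathcal{E}(K_0,X)$ being a bijection, while you phrase it via the inverse limit along an exhaustion; these are the same argument.
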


Now we consider pointed singular Ricci shrinkers $\X=(\X,\s,g,f,p)$. In order to study their structure at infinity we will work with the super-level sets of the function $\varrho\colon \X\to \R$. We start with some notations.

\begin{defn}\label{def.D}
Given an interval $I\subseteq [0,\infty)$, define
\begin{align*}
D(I)=\{x\in \X\colon \varrho(x)\in I\}.
\end{align*}
If $I=\{a\}$, we also write
\begin{equation*}
D(a)=D(\{a\})=\{x\in \X\colon \varrho(x)=a\}
\end{equation*}
for the corresponding level set. 
\end{defn}

Thanks to \eqref{rhogrowth-singular}, if $I$ is compact then $D(I)$ is compact too (possibly empty), being $\varrho$ proper. If $D[a,\infty)\subseteq \reg$ and $a$ is a regular value with $D(a)\neq \emptyset$ then $D[a,\infty)$ is a submanifold with boundary of 
$\reg$, with interior $D(a,\infty)$ and boundary $D(a)$. Similar statements hold for other types of intervals. If $a\in [\min_{\X}\varrho,\sup_{\X}\varrho)$ then $D(a)$ is not empty and $\varrho(D(I))=I$ for every interval 
$I\subseteq [\min_{\X}\varrho,\sup_{\X}\varrho)$.

Note that, if $\X$ is a limit singular Ricci shrinkers, we have $0\leq \varrho(p)\leq \sqrt{2n}$ and every sub-level $D[0,a]$ contains at least $p$ if $a>\sqrt{2n}$.

The following result is an analogue of a well known fact from Morse theory. 

\begin{prop}\label{prop.ends-shrinker} 
Let $\X$ be a singular Ricci shrinker and $I\subseteq [0,\infty)$ an interval. Consider a connected component $W$ of $D(I)$ with $W\subseteq \reg$ and not containing any critical point of $f$. Then $\varrho(W)=I$ and the following hold.
\begin{enumerate}
\item $W$ is a $n$-dimensional submanifold with boundary of $\reg$. If $I$ is open then $\partial W=\emptyset$
otherwise $\partial W=D(\partial I \cap I)\cap W \neq \emptyset$, and it is the union of the connected components of $D(\partial I\cap I)$ 
that are contained in $W$. The interior $W^{\circ}$ is given by $D(I^{\circ}) \cap W$ and coincides with the unique connected component of $D(I^{\circ})$ contained in $W$.
\item Take $\xi \in I$. The vector field $\nabla f/|\nabla f|^{2}$ induces a diffeomorphism
$W\simeq W(\xi)\times I(\xi)$, where $W(\xi)=D(\xi)\cap W$ and $I(\xi)=\{t\in \R\colon t=(s^{2}-\xi^{2})/4, s\in I\}$ is an interval. 
In particular $W(\xi)$ is connected, compact and is the unique connected component of the level set $D(\xi)$ that is contained in $W$.
An analogous statement holds for the vector field $\nabla f$.
\item For every sub-interval $J\subseteq I$ the set $W(J)=D(J)\cap W$ is connected and is the unique connected component of $D(J)$ that is contained in $W$. In particular, it satisfies all the properties seen for $W$.
\item If $\sup I=\infty$ then $W^{\circ}$ is an end of $\X$ relative to $D[0,a]$, where $a=\inf I$. 
In particular, $\X$ is not compact and $W^{\circ}$ determines a unique end $\e \in \mathcal{E}(\X)$ such that $\e(D[0,a])=W^{\circ}$.
\end{enumerate}
\end{prop}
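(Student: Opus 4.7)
My plan is to prove all parts by using that $\varrho|_W$ is a smooth submersion (as $W \subseteq \reg$ has no critical points of $f$), that it is proper, and by constructing the required diffeomorphism explicitly via the flow of $X = \nabla f/|\nabla f|^2$.

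First, I would establish $\varrho(W) = I$. Since $W \subseteq \reg$ and $|\nabla f| > 0$ on $W$, the restriction $\varrho|_W$ is smooth with $\nabla\varrho \neq 0$, hence a submersion and an open map. As a connected component, $W$ is open and closed in $D(I)$; combined with properness of $\varrho : \X \to [0,\infty)$, this yields properness of $\varrho|_W : W \to I$. Hence $\varrho|_W$ is both open and closed, and its image is a clopen subset of the connected interval $I$, giving $\varrho(W) = I$. Part (1) then follows from the regular value theorem applied to $\varrho|_W$: on $D(I^\circ) \cap W$ we have a smooth $n$-manifold without boundary, whereas $D(\partial I \cap I) \cap W$ yields the expected smooth boundary, and the identification $W^\circ = D(I^\circ) \cap W$ is immediate.

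For part (2), the field $X = \nabla f / |\nabla f|^2$ is smooth on $W$ and satisfies $df(X) = 1$, so along its flow $\phi_t$ one has $\varrho(\phi_t(x))^2 = \varrho(x)^2 + 4t$. I would then consider the natural candidate
\begin{equation*}
\Phi : W(\xi) \times I(\xi) \to W, \qquad \Phi(x,t) = \phi_t(x),
\end{equation*}
whose inverse is $w \mapsto (\phi_{-\tau(w)}(w), \tau(w))$ with $\tau(w) = (\varrho(w)^2 - \xi^2)/4$. The technical crux is showing that the flow is defined on all of $W(\xi) \times I(\xi)$ and remains in $W$. Fix $x \in W(\xi)$, let $[0, b_x)$ be the maximal forward time during which $\phi_t(x) \in W$, and suppose $b_x < \sup I(\xi)$ for contradiction. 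Since $\varrho$ is bounded along the orbit, the orbit lies in some compact $D(K)$ with $K \subseteq I$ and admits a limit point $y$ as $t \to b_x^-$. By continuity of $\varrho$ on $\X$, $\varrho(y) = \sqrt{\xi^2 + 4b_x} \in I$, so $y \in D(I)$; since $W$ is closed in $D(I)$, we deduce $y \in W \subseteq \reg$, and since $W$ contains no critical points, $|\nabla f|(y) > 0$. Thus the flow extends past $y$, contradicting maximality. The backward direction is symmetric, and $\Phi$ is then a diffeomorphism, which forces connectedness of $W(\xi)$.

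Parts (3) and (4) then follow from this structure. For part (3), applying the same flow argument to a sub-interval $J$ yields $W(J) \simeq W(\xi) \times J(\xi)$ and hence connectedness of $W(J)$. Uniqueness of $W(\xi)$ (respectively $W(J)$) as the connected component of $D(\xi)$ (respectively $D(J)$) contained in $W$ is a general trapping argument: any connected subset of $D(\xi)$ (respectively $D(J)$) contained in $W$ must sit inside $W(\xi)$ (respectively $W(J)$), because its union with $W$ would otherwise be a strictly larger connected subset of $D(I)$, contradicting that $W$ is itself a connected component. For part (4), assume $\sup I = \infty$ and set $a = \inf I$. Then $W^\circ = W \cap D(I^\circ)$ is connected (by part (3) applied to $J = I^\circ$), lies in $\X \setminus D[0,a] = D((a,\infty))$, and is a connected component of this set by the same trapping argument; properness of $\varrho$ together with $\sup\varrho(W^\circ) = \infty$ shows that $W^\circ$ is not relatively compact, so it is an end of $\X$ relative to $D[0,a]$, forcing $\X$ to be non-compact. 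Proposition~\ref{prop.ends-general}(5) yields $\e \in \E(\X)$ with $\e(D[0,a]) = W^\circ$; uniqueness of $\e$ follows by reducing an arbitrary compact $H \supseteq D[0,a]$ to some $H = D[0,b]$ (using properness of $\varrho$) and invoking part (3) with $J = I \cap (b,\infty)$ to see that $W \cap D((b,\infty))$ is the unique end relative to $H$ sitting inside $W^\circ$.

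The step I expect to be the main obstacle is the escape argument in part (2): a priori the flow line could approach the singular set $\s$, a critical point of $f$ outside $W$, or a different connected component of $D(I)$. All three are excluded simultaneously by combining two facts, namely that $W$ is closed in $D(I)$ as a connected component and that $\varrho$ extends continuously to all of $\X$ (established in Section~\ref{section-preliminaries}). The latter forces any limit point of the orbit to lie in $D(I)$; the former then places it inside $W \subseteq \reg$, where no critical points exist and the flow automatically extends. Without the continuous extension of $\varrho$, excluding approaches to $\s$ would require substantially finer estimates near the singular set.
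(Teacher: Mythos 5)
Your proposal is correct and takes essentially the same route as the paper: both hinge on the flow of $\nabla f/|\nabla f|^2$ together with $\varrho^2$ evolving linearly, the continuity and properness of $\varrho$ on $\X$, the fact that $W$ is both open and closed in $D(I)$, and a compactness (escape lemma) argument to show the flow is defined on all of $I(\xi)$ and trapped in $W$. The only cosmetic difference is that you first establish $\varrho(W)=I$ by a purely topological open/closed-image argument (submersion plus properness of $\varrho|_W$), whereas the paper derives it directly from completeness of the flow, and you invoke Proposition~\ref{prop.ends-general}(5) rather than (4) in Point~4; both are valid, and your escape step via a limit point $y$ is the same contradiction the paper draws from the escape lemma, just phrased slightly more informally.
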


\section{Estimating the number of general ends}\label{section-estimate-number-ends}

In this section we estimate the number of ends of Ricci shrinkers and in particular prove Theorems \ref{thm.endbound} and \ref{thm.limitendbound}. First of all, we prove the following preliminary result for singular shrinkers.

\begin{prop}\label{endspecialcase} Let $\X$ be a singular Ricci shrinker and assume that $\s\subseteq D[0,r_{0}]$ and $|\nabla f|>0$ on $D(r_{0},\infty) \neq \emptyset$. Let $a\geq r_{0}$.
\begin{enumerate}
\item Every connected component of $D(a,\infty)$ is an end of $\X$ relative to $D[0,a]$ and therefore we can identify $\mathcal{E}(D[0,a],\X)$ 
with $\pi_{0}(D(a,\infty))$.
\item There is a bijection $\mathcal{E}(\X)\to \pi_{0}(D(a,\infty))$.
\item $\#\mathcal{E}(\X)$ is finite.
\end{enumerate} 
\end{prop}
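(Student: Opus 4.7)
The strategy is to reduce all three statements to the Morse-theoretic description of super-level sets of $\varrho$ provided by Proposition \ref{prop.ends-shrinker}. Its hypotheses are satisfied on $D(a,\infty)$: since $a\geq r_{0}$ and $\s\subseteq D[0,r_{0}]$ we have $D(a,\infty)\subseteq \reg$, and by assumption $|\nabla f|>0$ throughout. Moreover, $\varrho=2\sqrt{f+c(g)}$ satisfies $|\nabla\varrho|=2|\nabla f|/\varrho$, so $\varrho$ is smooth with nowhere vanishing gradient on $D(a,\infty)$; every $\xi>a$ is therefore a regular value.

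For Point (1), I use that $\X\setminus D[0,a]=D(a,\infty)$, so the connected components of $D(a,\infty)$ are exactly the connected components of $\X\setminus D[0,a]$. Applying Proposition \ref{prop.ends-shrinker}(4) to each such component $W$ with $I=(a,\infty)$ directly produces an end of $\X$ restricting to $W$; in particular $W$ is not relatively compact, so it is an end of $\X$ relative to $D[0,a]$. This identifies $\mathcal{E}(D[0,a],\X)$ with $\pi_{0}(D(a,\infty))$.

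For Point (2), I would define $\Phi\colon \mathcal{E}(\X)\to \pi_{0}(D(a,\infty))$ by $\Phi(\e):=\e(D[0,a])$, well-defined by Proposition \ref{prop.ends-general}(1) and Point (1). Surjectivity follows again from Proposition \ref{prop.ends-shrinker}(4). The key step is injectivity, which I would establish through the uniqueness criterion in the second half of Proposition \ref{prop.ends-general}(5): for each compact $H\supseteq D[0,a]$, only one end of $\X$ relative to $H$ is contained in a given $W\in \pi_{0}(D(a,\infty))$. Pick $b$ with $H\subseteq D[0,b]$; by Proposition \ref{prop.ends-shrinker}(3) applied with sub-interval $J=(b,\infty)$, the set $W_{b}:=W\cap D(b,\infty)$ is connected. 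If $C\subseteq W$ is a non-relatively-compact component of $\X\setminus H$, then properness of $\varrho$ forces $\varrho$ to be unbounded on $C$, so $C\cap W_{b}\neq \emptyset$; since $W_{b}\subseteq \reg$ is locally connected and $C$ is a component of $\X\setminus H\supseteq W_{b}$, the intersection $C\cap W_{b}$ is open and closed in $W_{b}$, hence equals $W_{b}$. Therefore $C$ must be the component of $\X\setminus H$ that contains $W_{b}$, which is uniquely determined.

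For Point (3), I pick any $\xi>a$. Then $D(\xi)$ is a compact $(n-1)$-dimensional submanifold of $\reg$ (compactness from properness of $\varrho$; manifold because $\xi$ is regular), so it has only finitely many connected components. Proposition \ref{prop.ends-shrinker}(2) applied to each $W\in \pi_{0}(D(a,\infty))$ shows that $W\cap D(\xi)$ is a single component of $D(\xi)$, and distinct $W$'s give disjoint components. Hence $\#\pi_{0}(D(a,\infty))<\infty$, and combined with Point (2) this yields $\#\mathcal{E}(\X)<\infty$. The injectivity step in Point (2) is the most delicate part, being precisely where the abstract uniqueness condition of Proposition \ref{prop.ends-general}(5) must be translated into the concrete level-set connectedness provided by the $\nabla f$-flow.
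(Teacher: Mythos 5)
Your proposal is correct and follows the paper's overall strategy closely: all three points are reduced to the Morse-theoretic structure of $D(I)$ provided by Proposition \ref{prop.ends-shrinker}. Points (1) and (3) are handled as in the paper; the only genuine divergence is in your proof of injectivity in Point (2). The paper argues by direct contradiction: if two distinct ends $\e\neq\f$ both restrict to the same $E\in\pi_0(D(a,\infty))$, then there is $b>a$ with $\e(D[0,b])\neq\f(D[0,b])$ disjoint and both inside $E$, so $E$ would contain two distinct components of $D(b,\infty)$, contradicting Proposition \ref{prop.ends-shrinker}(3) applied with $J=(b,\infty)$. You instead verify the uniqueness criterion of Proposition \ref{prop.ends-general}(5) for each $W$: for every compact $H\supseteq D[0,a]$ you pick $b$ with $H\subseteq D[0,b]$ and argue that any unbounded component $C$ of $\X\setminus H$ inside $W$ must absorb the connected set $W_b=W\cap D(b,\infty)$ (via an open-and-closed argument, using local connectedness of $\X\setminus H\subseteq\reg$, plus the fact that properness of $\varrho$ forces $C\cap D(b,\infty)\neq\emptyset$). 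Both arguments ultimately rest on the same fact, namely the connectedness of $W\cap D(b,\infty)$ from Proposition \ref{prop.ends-shrinker}(3); the paper's version is shorter because it only needs to deal with exhausting compacts of the form $D[0,b]$, whereas your route is a touch more robust in that it verifies the abstract hypothesis of Proposition \ref{prop.ends-general}(5) for arbitrary compact $H$. For the existence part of that criterion you should explicitly invoke Proposition \ref{prop.ends-general}(3) (surjectivity of $\mathcal{E}(H,\X)\to\mathcal{E}(D[0,a],\X)$) to guarantee that at least one unbounded component of $\X\setminus H$ lies in $W$; aside from that small omission the argument is complete.
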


\begin{proof}\
\begin{enumerate}
\item From Proposition \ref{prop.ends-shrinker} it follows that $D(a,\infty)\neq \emptyset$ and each of its connected components is an end of $\X$ relative to $D[0,a]$.

\item Define a map $\mathcal{E}(\X)\to \pi_{0}(D(a,\infty))$ as $\e\mapsto \e(D[0,a])$. This map is well defined thanks to the preceding point and it is onto thanks to Proposition \ref{prop.ends-general}. Let us show that it is injective. Take ends $\e \neq \f$ and suppose by contradiction that $E=\e(D[0,a])=\f(D[0,a])$. Being distinct ends, there exists $b>a$ such that $\widetilde{E}=\e(D[0,b])\neq \widetilde{F}=\f(D[0,b])$. Therefore $\widetilde{E}\cap \widetilde{F}=\emptyset$ but $\widetilde{E},\widetilde{F}\subseteq E$. This says that $E$ contains two distinct connected components of $D(b,\infty)$, in contradiction to Proposition \ref{prop.ends-shrinker}. 

\item Take $\xi>a$ and consider the level set $D(\xi)$. Then $D(\xi)$ is a compact submanifold and has a finite number of connected components. The inclusion $D(\xi)\subseteq D(a,\infty)$ induces a natural map $\pi_{0}(D(\xi))\to \pi_{0}(D(a,\infty))$, which is a bijection as one can easily verify.\qedhere
\end{enumerate} 
\end{proof}

We can now prove Theorems \ref{thm.endbound} based on a counting argument using the volume estimates \eqref{volumegrowth} and \eqref{noncollapsing}. 

\begin{proof}[Proof of Theorem \ref{thm.endbound}.]
Thanks to Proposition \ref{endspecialcase} we know that $M$ has finitely many ends and that the natural map $\mathcal{E}(M)\to \mathcal{E}(D[0,r_{0}],M)$ is a bijection. Let $E_{1},\ldots, E_{k}$ be the distinct ends of $M$ relative to $D[0,r_{0}]$ and, if 
$I\subseteq (r_{0},\infty)$ is an interval, denote by $E_{i}(I)$ the unique connected component of $D(I)$ that is contained in $E_{i}$. 
For every $i=1,\ldots,k$ choose a point $x_{i}\in E_{i}(\xi)$, where $\xi=r_{0}+1$. Since $\varrho$ is $1$-Lipschitz we have $B(x_{i},1)\subseteq E_{i}(\xi-1,\xi+1)$. In particular the balls $B(x_{1},1),\ldots, B(x_{k},1)$ are pairwise disjoint and contained in $B(p,r)$, where $r=\xi+1+5n$.

Now thanks to \eqref{volumegrowth} and \eqref{noncollapsing} the number of disjoint balls of radius $1$ contained in $B(p,r)$ is at most 
$\Lambda=\Lambda(n,\underline{\mu},r)=C_{0}r^{n}/\kappa(1)$ and therefore the number $k$ of ends is bounded by this $\Lambda$. Finally $r=r_{0}+2+5n$ and we have the claimed dependence.
\end{proof}

\begin{rem}\label{rem.11singular} 
The theorem can be generalised to the singular case with minor modifications. Let $\X$ be a singular Ricci shrinker and suppose $\s\subseteq D[0,r_{0}]$. Then $\#\mathcal{E}(\X)\leq \Lambda$, where now $\Lambda=\Lambda(n,r_{0},\kappa_{1}(K,r),\kappa_{2}(K,r))$, with $K=D(r_{0}+1)$, $r=\varrho(p)+r_{0}+1+4n$ and $\kappa_1, \kappa_2$ are the constants from Definition \ref{singularRS}. If $\X$ is a limit singular Ricci shrinker arising from a sequence of Ricci shrinkers with entropy bounded from below, $\mu_{i}\geq \underline{\mu}$, then by Proposition \ref{prop.limitsingularRS} the dependencies in $\Lambda$ are again the same as in the Theorem \ref{thm.endbound}. 
\end{rem}

We are now interested in seeing what happens to the number of ends when we pass to the limit in a converging sequence of shrinkers. If we suppose that the shrinkers have no critical points at large distance and that the limit space has discrete singularities outside of a compact subset, then the number of ends is a lower semicontinuous function, but it becomes continuous if we further assume that the singular locus of the limit is contained in a compact subset.

\begin{proof}[Proof of Theorem \ref{thm.limitendbound}.]
We start with some notations. Let $(U_{i})_{i\in \N}$ and $(\phi_{i}\colon U_{i}\to M_{i})_{i\in \N}$ be as in Definition \ref{def.singularCG} and denote by $\widehat{\phi}_{i}\colon \X\to M_{i}$ the extension of $\phi_{i}$ to  an $\varepsilon_{i}$-pGHA. Furthermore all the objects depending on $M_{i}$ will be indexed by $i$. We also note that, thanks to Proposition \ref{endspecialcase}, if $D_{i}(r_{0},\infty)\neq \emptyset$ then $M_{i}$ has a finite number of ends and for every $a\geq r_{0}$ the map $\mathcal{E}(M_{i})\to \mathcal{E}(D_{i}[0,a],M_{i})$ is a bijection, where we can identify $\mathcal{E}(D_{i}[0,a],M_{i})$ with $\pi_{0}(D_{i}(a,\infty))$. Finally, we remark that $\X$ is a limit singular Ricci shrinker, and therefore we can use the estimates from Proposition \ref{prop.limitsingularRS}.

\begin{enumerate}
\item The idea is to construct an injective map $\lambda_{i}\colon \mathcal{E}(\X)\to \mathcal{E}(M_{i})$ for every $i$ large. First of all we can suppose that $\X$ is not compact, otherwise the claim is trivial. In particular $D(\xi)\neq \emptyset$ for $\xi>\sqrt{2n}\geq \inf_{\X}\varrho$ and similarly $D(a,\infty)\neq \emptyset$ for every $a>0$.

Note that if we take $x\in \reg$ with $\varrho(x)=r_{0}+1$ then $x\in U_{i}$ for $i$ large. Therefore $\phi_{i}(x)\in M_{i}$ and since $\varrho_{i}(\phi_{i}(x))\to \varrho(x)$ we have $\varrho_{i}(\phi_{i}(x))>r_{0}$ for $i$ large. Therefore $D_{i}(r_{0},\infty)\neq \emptyset$ and every late $M_{i}$ is not compact. For the moment suppose $\mathcal{E}(\X)$ is finite. Then from Proposition \ref{numerofinitofini} it follows that there exists $a_{0}\geq r_{0}$ such that the natural map $\mathcal{E}(\X)\to \mathcal{E}(D[0,a_{0}],\X)$ is a bijection. Furthermore, we can take $a_{0}$ satisfying $K\subseteq D[0,a_{0}-1]$ and $a_{0}>\sqrt{2n}$.

We can now use the discreteness assumption of $\s$ outside $K$ in order to find $b>a>a_{0}$ such that $D[a,b]\subseteq \reg$ and $D[a,b]$ does not contain any critical points of $f$. Indeed, consider $D(I)$, where $I\subseteq (a_{0},\infty)$ is any closed interval with $I^{\circ}\neq \emptyset$. Then since $D(I)$ is compact and $\s\cap D(a_{0},\infty)$ is discrete, the intersection $D(I)\cap \s$ is finite. If $s_{1}<\ldots < s_{k}$ are the distinct values assumed by $\varrho$ on these singular points, choose any closed interval $J\subseteq (s_{j},s_{j+1})$, so that $D(J)\subseteq \reg$. By Sard's lemma applied to $\varrho\vert_{\reg}$, there exists a regular value $\xi \in J^{\circ}$ and we have $D(\xi)\neq \emptyset$. It is easy to construct an open neighbourhood $U$ of the compact set $D(\xi)$ with $U\subseteq D(J)$ and $\nabla f\neq 0$ on $U$. At this point, using the compactness of $D(J)$, one can find $\varepsilon>0$ such that $D[\xi-\varepsilon,\xi+\varepsilon]\subseteq U$ and we conclude taking $a=\xi-\varepsilon$ and $b=\xi+\varepsilon$.

Fixed $D[a,b]$ as above, consider any end $\e\in \mathcal{E}(\X)$ and let $E=\e(D[0,a_{0}])$. We have $\varrho(E)=(a_{0},\infty)$ and in particular $E\cap D[a,b] \neq \emptyset$. Let $E[a,b]$ be a connected component of $D[a,b]$ contained in $E$ and, for every subinterval $J\subseteq [a,b]$, denote by $E(J)$ the unique connected component of $D(J)$ that is contained in $E[a,b]$.
\smallskip

\begin{claim}\label{cc} 
There exist an interval $(\alpha,\beta)\subseteq (a,b)$ and $i_{0}\in \N$ such that, for every $i\geq i_{0}$, there is a connected component $V_{i}$ of $D_{i}(\alpha,\beta)$ contained in $\phi_{i}(E(a,b))$.
\end{claim}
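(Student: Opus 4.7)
The plan is to pick a regular value $\xi \in (a,b)$ of $\varrho\vert_{\reg}$ and values $\alpha,\beta$ with $a<\alpha<\xi<\beta<b$, then use the smooth convergence on compacta to argue that for $i$ large, the image $\phi_i(E(\xi))$ lies inside $D_i(\alpha,\beta)$ while the images of the boundary pieces $E(a)$ and $E(b)$ lie outside $D_i[\alpha,\beta]$. A connectedness argument then forces the component $V_i$ of $D_i(\alpha,\beta)$ containing $\phi_i(E(\xi))$ to stay inside $\phi_i(E(a,b))$.

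More concretely, first I would pick $\alpha,\beta,\xi$ with $a<\alpha<\xi<\beta<b$ where $\xi$ is a regular value of $\varrho\vert_{\reg}$; set $\delta=\tfrac14\min(\alpha-a,\xi-\alpha,\beta-\xi,b-\beta)>0$. Since $E[a,b]\subseteq\reg$ is compact and $(U_i)$ exhausts $\reg$, for $i\geq i_0$ we have $E[a,b]\subseteq U_i$, and by the $C^\infty_{\mathrm{loc}}$ convergence of $(\phi_i^{\ast}g_i,\phi_i^{\ast}f_i)$ to $(g,f)$ together with the convergence $c(g_i)\to c(g)$, one gets $\varrho_i\circ\phi_i\to \varrho$ uniformly on $E[a,b]$; thus (up to enlarging $i_0$) $|\varrho_i\circ\phi_i-\varrho|<\delta$ on $E[a,b]$. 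This yields three things for $i\geq i_0$: (i) $\varrho_i<\alpha$ on $\phi_i(E(a))$, so $\phi_i(E(a))\cap D_i[\alpha,\beta]=\emptyset$; (ii) $\varrho_i>\beta$ on $\phi_i(E(b))$, so $\phi_i(E(b))\cap D_i[\alpha,\beta]=\emptyset$; (iii) $\varrho_i\in(\alpha,\beta)$ on $\phi_i(E(\xi))$, so $\phi_i(E(\xi))\subseteq D_i(\alpha,\beta)$. Note that $E(\xi)$ is non-empty, compact and connected by Proposition \ref{prop.ends-shrinker}, so $\phi_i(E(\xi))$ is a non-empty connected subset of $D_i(\alpha,\beta)$.

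Now set $\Omega_i=\phi_i(E(a,b))$. Since $\phi_i\colon U_i\to M_i$ is a smooth embedding of equidimensional manifolds, $\phi_i(U_i)$ is open in $M_i$ and $\phi_i$ restricts to a diffeomorphism onto its (open) image; in particular $\Omega_i$ is open in $M_i$. Moreover, $\phi_i$ restricted to the compact $E[a,b]$ is a homeomorphism onto $\phi_i(E[a,b])$, which is closed in $M_i$, and one has $\overline{\Omega_i}=\phi_i(E[a,b])$ (since $E(a,b)$ is dense in $E[a,b]$ by the flow-box description in Proposition \ref{prop.ends-shrinker}), so the topological boundary satisfies $\partial\Omega_i\subseteq\phi_i(E(a))\cup\phi_i(E(b))$. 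Combined with (i)--(ii) this gives the crucial property $\partial\Omega_i\cap D_i(\alpha,\beta)=\emptyset$.

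Let $V_i$ be the connected component of $D_i(\alpha,\beta)$ containing the connected set $\phi_i(E(\xi))\subseteq\Omega_i$. If $V_i\not\subseteq\Omega_i$, then $V_i$ intersects both the open set $\Omega_i$ and the open set $M_i\setminus\overline{\Omega_i}$; by connectedness of $V_i$ it must then meet $\partial\Omega_i$, contradicting $V_i\subseteq D_i(\alpha,\beta)$ and $\partial\Omega_i\cap D_i(\alpha,\beta)=\emptyset$. Hence $V_i\subseteq\Omega_i=\phi_i(E(a,b))$, as desired. The one subtle point (and hence the main thing to keep an eye on) is verifying that $\partial\Omega_i$ lies inside $\phi_i(E(a))\cup\phi_i(E(b))$; everything else is a fairly direct exploitation of the uniform convergence of $\varrho_i\circ\phi_i$ on the compact set $E[a,b]$.
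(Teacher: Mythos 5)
Your proposal is correct, and at the conceptual level it follows the same strategy as the paper: establish $|\varrho_i\circ\phi_i-\varrho|<\delta$ uniformly on the compact core (the paper uses $D[a,b]$, you use the smaller $E[a,b]$, both work), then trap the connected component $V_i$ of $D_i(\alpha,\beta)$ inside $\phi_i(E(a,b))$ by a connectedness argument. The implementation of the trapping step, however, is genuinely different and cleaner in your version: the paper picks a curve $\gamma$ from the seed set out to a hypothetical point of $V_i\setminus\phi_i(E(a,b))$, locates the first exit time $t_\ast$, and extracts a convergent subsequence of $\phi_i^{-1}\circ\gamma$ inside the compact $E[a+\varepsilon,b-\varepsilon]$ to contradict the definition of $t_\ast$; you instead note that $\partial\Omega_i\subseteq\phi_i(E(a))\cup\phi_i(E(b))$ is disjoint from $D_i[\alpha,\beta]$, so that a connected subset of $D_i(\alpha,\beta)$ meeting the open set $\Omega_i$ can never leave it. Your route trades the explicit escape/compactness argument for a purely topological one, at the price of having to establish $\overline{\Omega_i}=\phi_i(E[a,b])$, which you correctly identify as the delicate point and correctly justify via the flow-box description from Proposition \ref{prop.ends-shrinker} (the density of $E(a,b)$ in $E[a,b]$ is exactly the interior-is-dense fact for a compact manifold with boundary). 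Both approaches use the same ingredients; yours packages the final step more transparently.
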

\begin{proof}
Let $\varepsilon<(b-a)/4$ and take $i_{0}\in \N$ such that $D[a,b]\subseteq U_{i}$ and $|\varrho-\phi_{i}^{\ast}\varrho_{i}|<\varepsilon$ on $D[a,b]$ for every $i\geq i_{0}$. A simple computation shows $\phi_{i}(E[a+3\varepsilon,b-3\varepsilon])\subseteq D_{i}(a+2\varepsilon,b-2\varepsilon)$. Let $V_{i}$ be the (unique) connected component of $D_{i}(a+2\varepsilon,b-2\varepsilon)$ that contains $\phi_{i}(E[a+3\varepsilon,b-3\varepsilon])$. Then $V_{i}\subseteq \phi_{i}(E(a,b))$. First of all $V_{i}$ intersects $\phi_{i}(E(a,b))$ by construction and we can take $x$ in this intersection. Suppose, by contradiction, that there exists $y\in V_{i}\setminus \phi_{i}(E(a,b))$ and let $\gamma\colon [0,1]\to V_{i}$ be a curve connecting $x$ to $y$. Now there exists $t_{\ast}\in (0,1]$ such that $\gamma(t)\in \phi_{i}(E(a,b))$ for $t\in [0,t_{\ast})$ and $\gamma(t_{\ast})\notin \phi_{i}(E(a,b))$. Considering the curve $\eta\colon [0,t_{\ast})\to E(a,b)$ defined as $\phi_{i}^{-1}\circ\gamma\vert_{[0,t_{\ast})}$ it is straightforward to check that $\varrho(\eta(t))\in (a+\varepsilon, b-\varepsilon)$ and therefore $\eta$ takes values in $E[a+\varepsilon,b-\varepsilon]$. Now let $(t_{j})\subseteq [0,t_{\ast})$ be a sequence of times converging to $t_{\ast}$. Since $\eta(t_{j})\in E[a+\varepsilon,b-\varepsilon]$, by compactness we get, up to a subsequence, $\eta(t_{j})\to z\in E[a+\varepsilon,b-\varepsilon]$. Therefore $\gamma(t_{j})=\phi_{i}(\eta(t_{j}))\to \phi_{i}(z)$ but on the other hand $\gamma(t_{j})\to \gamma(t_{\ast})$ by continuity. This gives $\gamma(t_{\ast})=\phi_{i}(z)$, contradicting the definition of $t_{\ast}$. The claim follows with $\alpha=a+2\varepsilon$ and $\beta=b-2\varepsilon$.
\end{proof}

Since $V_{i}$ is contained in a unique connected component of $D_{i}(\alpha,\infty)$ it defines a unique end $\e_{i}\in \mathcal{E}(M_{i})$ thanks to Proposition \ref{endspecialcase}. Note that in the above argument $a,b, \alpha, \beta, i_{0}$ are independent of $\e$. Therefore Claim \ref{cc} holds for every end of $\X$, with the same constants $a,b,\alpha,\beta,i_{0}$. In this way we get a map $\lambda_{i}\colon \mathcal{E}(\X)\to \mathcal{E}(M_{i})$ for every $i\geq i_{0}$. We claim that each $\lambda_{i}$ is injective and therefore $\#\mathcal{E}(\X)\leq \liminf_{i\to \infty}\#\mathcal{E}(M_{i})$.

Indeed, take $\e \neq \widetilde{\e}$ ends of $\X$. Then $E=\e(D[0,a_{0}])$ and $\widetilde{E}=\widetilde{\e}(D[0,a_{0}])$ are disjoint. 
In particular $E[a,b]\cap \widetilde{E}[a,b]=\emptyset$ and $V_{i}\cap \widetilde{V}_{i}=\emptyset$. Take the corresponding ends $\e_{i}$ and $\widetilde{\e}_{i}$ of $M_{i}$. If $\e_{i}=\widetilde{\e}_{i}$ then $E_{i}=\e_{i}(D[0,\alpha])=\widetilde{\e}_{i}(D[0,\alpha])$ and $V_{i},\widetilde{V}_{i}\subseteq E$. This contradicts Proposition \ref{prop.ends-shrinker} since $E_{i}$ is a connected component of $D_{i}(\alpha,\infty)$ without critical points. Therefore $\e_{i}\neq \widetilde{\e}_{i}$ and $\lambda_{i}$ is injective.

Finally, suppose that $\mathcal{E}(\X)$ is not finite. Then for every $m\in \N$ we can select a finite subset $\mathcal{F}_{m}\subseteq \mathcal{E}(\X)$ of cardinality $m$. As in the finite case we can find $a_{0}\geq r_{0}$ and constants  $a,b,\alpha,\beta,i_{0}$, this time possibly dependent on $m$, such that the natural map $\mathcal{E}(X)\to \pi_{0}(D(a_{0},\infty))$ is injective, $D[a,b]\subseteq \reg$ and Claim \ref{cc} holds for every end in $\mathcal{F}_{m}$. This again defines a map $\lambda_{i}\colon \mathcal{F}_{m}\to \mathcal{E}(M_{i})$ for $i\geq i_{0}$ and the same argument seen above proves that  $\lambda_{i}$ is injective. Therefore $\#\mathcal{E}(M_{i})\geq m$ for $i$ large. As $m$ was chosen arbitrarily, we conclude that $\liminf_{i\to \infty}\#\mathcal{E}(M_{i})=\infty=\#\mathcal{E}(\X)$.

\item Due to the first point, we only need to prove the inequality $\limsup_{i\to \infty}\#\mathcal{E}(M_{i})\leq \#\mathcal{E}(\X)$.
Let $m=\limsup_{i\to \infty}\#\mathcal{E}(M_{i})<\infty$. If $m=0$, there is nothing to prove. If $m\geq 1$, up to a subsequence we can assume $\#\mathcal{E}(M_{i})=m$ for every $i$. In particular each $M_{i}$ is not compact and $D_{i}(a,\infty)\neq \emptyset$ for every $a>0$. Thanks to the convergence, $\X$ is non-compact too.

By assumption, $\s$ is contained in a compact subset and therefore we can find $a_{0}\geq r_{0}$ such that $\s \subseteq D[0,a_{0}]$. Thanks to Proposition \ref{endspecialcase}, we know that $\X$ has a finite number of ends and the natural map 
$\mathcal{E}(\X)\to \mathcal{E}(\X, D[0,a_{0}])$ is a bijection, where we can identify $\mathcal{E}(\X, D[0,a_{0}])$ and $\pi_{0}(D(a_{0},\infty))$. Now consider an end $\e_{i}$ of $M_{i}$ and let $E_{i}=\e_i(D[0,a_{0}])$. Moreover if $J\subseteq (a_{0},\infty)$ is an interval, denote with $E_{i}(J)$ the unique connected component of $D(J)$ contained in $E_i$. Let $b>a>a_{0}+(10n+1)$. We have a result similar to Claim \ref{cc}.
\smallskip

\begin{claim}\label{cc2} 
There exist $i_{0}\in \N$ such that, for every $i\geq i_{0}$, we have $E_{i}[a,b] \subseteq \phi_{i}(U_{i})$. If we define $W_{i}=\phi_{i}^{-1}(E_{i}[a,b])\subseteq \X$ then there exists and interval $(\alpha,\beta)\subseteq (a,b)$ such that $W_{i}$ contains a connected component of $D(\alpha,\beta)$.
\end{claim}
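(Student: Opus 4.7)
The plan is to first prove the inclusion $E_i[a,b]\subseteq\phi_i(U_i)$ and then, using the homeomorphism $\phi_i\colon W_i\to E_i[a,b]$, to transport the Morse-theoretic structure on $E_i[a,b]$ back to $\X$ in order to produce the desired connected component of $D(\alpha,\beta)$ inside $W_i$. The hardest step is the inclusion itself, since the approximation maps $\widehat\phi_i$ extending $\phi_i$ may send singular points to ``ghost'' points of $M_i$ outside $\phi_i(U_i)$; the quantitative assumption $a>a_0+10n+1$ is precisely what provides the buffer needed to avoid these.

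For the inclusion, the key computation is that $a>a_0+10n+1$ together with $\s\subseteq D[0,a_0]$ forces $E_i[a,b]$ to remain at a uniformly positive distance from $\widehat\phi_i(\s)$. Indeed, for $y\in E_i[a,b]$ and $z\in\s$, the growth estimates in Proposition \ref{prop.limitsingularRS} yield $d(y,p_i)\geq \varrho_i(y)-\sqrt{2n}\geq a-\sqrt{2n}$ and $d(\widehat\phi_i(z),p_i)\leq d(z,p)+2\varepsilon_i\leq a_0+5n+2\varepsilon_i$, so for $i$ large $d(y,\widehat\phi_i(\s))\geq c$ for some positive constant $c$. Since $E_i[a,b]$ also lies in a ball of bounded radius around $p_i$, the approximation inclusion established inside the proof of Proposition \ref{prop.limitsingularRS}(3), namely $B(x_i,\delta-5\varepsilon_i)\setminus\overline{\n}_\varepsilon(\widehat\phi_i(\s))\subseteq\phi_i(B(x,\delta)\setminus\overline{\n}_{\varepsilon/2}(\s))$ applied with $x=p$ and suitable $\delta$ and $\varepsilon<c$, yields $E_i[a,b]\subseteq\phi_i(U_i)$ for all $i\geq i_0$.

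For the second assertion, set $\alpha=a+1$ and $\beta=b-1$ and let $W_i=\phi_i^{-1}(E_i[a,b])$; this is connected because $\phi_i$ is an embedding and $E_i[a,b]$ is connected. Since $|\nabla f_i|>0$ on $\{\varrho_i>r_0\}$ and $a>r_0$, Proposition \ref{prop.ends-shrinker} applied to $E_i[a,b]$ provides a point $y\in E_i[a,b]$ with $\varrho_i(y)=(a+b)/2$; put $x=\phi_i^{-1}(y)\in W_i$. Uniform convergence of $\phi_i^*\varrho_i$ to $\varrho$ on compact subsets of $\reg$ forces $\varrho(x)\to(a+b)/2$, so $\varrho(x)\in(\alpha,\beta)$ for $i$ large. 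Let $V$ be the connected component of $D(\alpha,\beta)$ containing $x$. Since $\alpha>a_0$, its compact closure $\overline V\subseteq D[\alpha,\beta]$ is disjoint from $\s$ and hence $\overline V\subseteq U_i$ for large $i$, and another application of uniform convergence shows $\phi_i(V)\subseteq D_i(a,b)\subseteq D_i[a,b]$. As $\phi_i(V)$ is connected and contains $y\in E_i[a,b]$, it must lie entirely in the single connected component $E_i[a,b]$, so $V\subseteq W_i$ as required.
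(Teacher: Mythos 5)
Your proof follows the same high-level plan as the paper's (keep $E_i[a,b]$ away from $\widehat\phi_i(\s)$ using the buffer $a>a_0+10n+1$, conclude the inclusion into $\phi_i(U_i)$, and then pull back a single level of $\varrho_i$ to find a connected component of $D(\alpha,\beta)$ inside $W_i$). The second half of your argument is essentially a variant of the paper's: you anchor a point $y_i$ at level $(a+b)/2$, pull it back, and propagate, whereas the paper starts from the component of $D[a-1,b+1]$ containing $W_i$ and descends to $D[a+2\varepsilon,b-2\varepsilon]$. Both are correct and comparably elementary.

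The genuine difference is in how the inclusion $E_i[a,b]\subseteq\phi_i(U_i)$ is established. The paper gives a direct, self-contained two-step argument: (i) using the pGHA property and \eqref{rhogrowth} to produce at least one point of $E_i[a,b]$ lying in $\phi_i(D(a-1,b+1))$, and (ii) a connectedness-by-curves contradiction to show that in fact all of $E_i[a,b]$ lies in $\phi_i(D(a-1,b+1))$. You instead invoke the auxiliary inclusion
\begin{equation*}
B(x_i,\delta-5\varepsilon_i)\setminus\overline{\n}_\varepsilon\big(\widehat\phi_i(\s)\big)\subseteq\phi_i\big(B(x,\delta)\setminus\overline{\n}_{\varepsilon/2}(\s)\big)
\end{equation*}
from the proof of Proposition \ref{prop.limitsingularRS}(3). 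Two small cautions here. First, that inclusion is not actually proved where you cite it: the paper states it there only with the remark that it follows ``by an argument similar to the one we will see in Claim \ref{cc}'' --- so in structure you are leaning on exactly the kind of curve argument the paper carries out explicitly in Claims \ref{cc} and \ref{cc2}, rather than circumventing it. Second, in Proposition \ref{prop.limitsingularRS}(3) the inclusion is stated in a regime $\delta\in(0,1]$, whereas you need $\delta$ large enough that $B(p_i,\delta-5\varepsilon_i)\supseteq E_i[a,b]$; the underlying curve argument does extend to this regime, but that extension is what actually needs to be checked. So your proof is sound and takes the intended route, but the step you treat as already available is precisely the one the paper proves from scratch in Claim \ref{cc2}; a fully self-contained version of your argument would reinstate that curve argument rather than cite the Proposition \ref{prop.limitsingularRS}(3) inclusion.
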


\begin{proof}
Let $\varepsilon<\min\{(b-a)/4, 1/(b+6n)\}$ and take $i_{0}\in \N$ such that $D[a-c,b+c]\subseteq U_{i}$ and $|\phi_{i}^{\ast}\varrho_{i}-\varrho|<\varepsilon$ on $D[a-c,b+c]$ for every $i\geq i_{0}$, where $c=10n+1$. We can also suppose that every $\widehat{\phi}_{i}$ is an $\varepsilon_{i}$-pGHA with $\varepsilon_{i}<\varepsilon$. By the choice of $\varepsilon$ and using \eqref{rhogrowth}, one can check that $E_{i}[a,b]\subseteq B(p_{i},1/\varepsilon_{i})$. Therefore, if $x\in E_{i}[a+\varepsilon,b-\varepsilon]$ there exists $y\in B(p,1/\varepsilon_{i})\subseteq \X$ such that $d(x,\widehat{\phi}_{i}(y))<\varepsilon_{i}$ and an easy computation gives $|d(y,p)-d(x,p_{i})|<2\varepsilon_{i}$. Again by \eqref{rhogrowth} we deduce $\varrho(y)\in (a-c,b+c)$ and therefore $y \in D[a-c,b+c]\subseteq U_{i}\subseteq \reg$. Now observe that if $z\in B(x,\varepsilon)$ then $|\varrho_{i}(x)-\varrho_{i}(z)|\leq d(x,z)<\varepsilon$ and therefore $B(x,\varepsilon)\subseteq E_{i}[a,b]$ by connectedness. In particular $\phi_{i}(y)\in E_{i}[a,b]$.
Moreover, we have $a-1<\varrho(y)<b+1$, that is, $y \in D(a-1,b+1)$, and therefore $\phi_{i}(D(a-1,b+1))\cap E_{i}[a,b]\neq \emptyset$.

Now we show that the inclusion $E_{i}[a,b]\subseteq \phi_{i}(D(a-1,b+1))$ holds. Suppose by contradiction that there exists $y\in E_{i}[a,b]$ not in $\phi_{i}(D(a-1,b+1))$ and let $\gamma\colon [0,1]\to E_{i}[a,b]$ be a curve connecting $x\in E_{i}[a,b]\cap \phi_{i}(D(a-1,b+1))$ to $y$. Since $\phi_{i}(D(a-1,b+1))\subseteq M_{i}$ is open, there exists $t_{\ast}\in (0,1]$ such that $\gamma(t)\in \phi_{i}(D(a-1,b+1))$ for $t\in [0,t_{j})$ but $\gamma(t_{\ast})\notin \phi_{i}(D(a-1,b+1))$. Let $\eta\colon [0,t_{\ast})\to D(a-1,b+1)$ be the curve obtained composing $\gamma\vert_{[0,t_{j})}$ and $\phi_{i}^{-1}$. If $(t_{j})\subseteq [0,t_{\ast})$ is a sequence of times converging to $t_{\ast}$ we have $\eta(t_{j})\in D(a-\varepsilon, b+\varepsilon)$. Up to a subsequence, we have $\eta(t_{j})\to z \in D[a-\varepsilon,b+\varepsilon]$ and therefore $\gamma(t_{j})=\phi_{i}(\eta(t_{j}))\to \phi_{i}(z)$. On the other hand, by continuity $\gamma(t_{j})\to \gamma(t_{\ast})$ and we obtain $\gamma(t_{\ast})=\phi_{i}(z)$, in contradiction with the assumption.

Consider $W_{i}=\phi_{i}^{-1}(E_{i}[a,b])\subseteq \X$. Being connected, $W_{i}$ is contained in a unique connected component of $D[a-1,b+1]$ and we can consider the unique connected component of $D[a+2\varepsilon,b-2\varepsilon]$ contained in the latter, which we denote $E[a+2\varepsilon,b-2\varepsilon]$. We claim $E[a+2\varepsilon,b-2\varepsilon] \subseteq W_{i}$. Indeed take $x\in W_{i}\cap E[a+2\varepsilon,b-2\varepsilon]$, which is not empty. Then $x=\phi_{i}^{-1}(y)$ for some $y\in E_{i}[a,b]$. This gives $E_{i}[a,b]\cap \phi_{i}(E[a+2\varepsilon,b-2\varepsilon])\neq \emptyset$. Therefore $\phi_{i}(E[a+2\varepsilon,b-2\varepsilon])\subseteq E_{i}[a,b]$ being contained in $D[a,b]$. Finally $E[a+2\varepsilon,b-2\varepsilon]\subseteq W_{i}$ and we conclude setting $\alpha=a+2\varepsilon$ and $\beta=b-2\varepsilon$. This finishes the proof of the claim.
\end{proof}

Now $W_{i}$ is connected and is contained in a unique connected component of $D(a_{0},\infty)$, determining a unique end $\e^{\infty}_{i}\in \mathcal{E}(\X)$ thanks to Proposition \ref{endspecialcase}. Note that in the above argument all the constants $a,b,\alpha,\beta,i_{0}$ are independent of the specific end $\e_{i}$. Therefore Claim \ref{cc2} holds for every end of $M_{i}$ where $i\geq i_{0}$ and with the same constants $a,b,\alpha,\beta,i_{0}$. This produces a map $\omega_{i}\colon \mathcal{E}(M_{i})\to \mathcal{E}(\X)$ for every $i\geq i_{0}$. Every $\omega_{i}$ is injective and therefore $\limsup_{i\to \infty}\#\mathcal{E}(M_{i})\leq \#\mathcal{E}(\X)$. Indeed, take $\e\neq \widetilde{\e}$ ends of $M_{i}$ and let $\e^{\infty}_{i}$, $\widetilde{\e}^{\infty}_{i}$ be the corresponding ends of $\X$. Since $\e(D[0,a])\cap \widetilde{\e}(D[0,a])=\emptyset$, we have $E_{i}(a,b)\cap \widetilde{E}_{i}(a,b)=\emptyset$ and therefore $W_{i}, \widetilde{W}_{i}$ are disjoint. In particular, the connected components of $D(\alpha,\beta)$ that they contain are disjoint, giving $\e^{\infty}_{i}\neq \widetilde{\e}^{\infty}_{i}$.

Finally, suppose $\limsup_{i\to \infty}\#\mathcal{E}(M_{i})=\infty$. Then given $m\in \N$ we can suppose, up to a subsequence, that $\#\mathcal{E}(M_{i})\geq m$ for every $i$ large. As $\mathcal{E}(M_{i})$ is finite, the preceding argument applies verbatim to the case in consideration, remarking again that $a,b,\alpha,\beta,i_{0}$ are constants independent of the ends of $M_{i}$ (but now possibly dependent on $m$). Therefore we obtain $\#\mathcal{E}(\X)\geq \#\mathcal{E}(M_{i})\geq m$ and since $m$ was chosen arbitrarily we deduce $\#\mathcal{E}(\X)=\infty=\limsup_{i\to \infty}\#\mathcal{E}(M_{i})$. The proof is complete.\qedhere
\end{enumerate}
\end{proof}

\begin{rem} 
As a consequence of this theorem, if $\#\mathcal{E}(\X)=\infty$ then the sequence $\#\mathcal{E}(M_{i})$ diverges. This cannot happen if we add the condition $\mu_{i}\geq \underline{\mu}$ and in this case we deduce $\#\mathcal{E}(\X)\leq \Lambda(n,\underline{\mu},r_{0})$ thanks to Theorem \ref{thm.endbound}. Under the assumption of the second point of the theorem, $\#\mathcal{E}(\X)$ is always finite (see Remark \ref{rem.11singular}) and therefore the number of ends $\#\mathcal{E}(M_{i})$ stabilises from some point onwards along the sequence.
\end{rem}

\section{Applications to moduli spaces}\label{applications-convergence}

The preceding results can be used to derive some information regarding the structure of moduli spaces of Ricci shrinkers. 
We recall the following compactness results.

\begin{prop}[Theorem 1.1 and Theorem 8.6 in \cite{LLW}; Theorem 1.1 and Theorem 2.5 in \cite{hm}; Theorem 1.1 in \cite{hm4}] \label{LLWcompactness} 
Let $(M_{i},g_{i},f_{i},p_{i})_{i\in \N}$ be a sequence of $n$-dimensional Ricci shrinkers with basepoint $p_{i}\in M_{i}$ and entropy bounded below $\mu_{i}\geq \underline{\mu}$. Then a subsequence of $(M_{i},g_{i},f_{i},p_{i})_{i \in \N}$ converges, in the pointed singular Cheeger-Gromov sense, to a $n$-dimensional singular Ricci shrinker $\X$ with singular part of Minkowski codimension at least four, $\dim_{\mathcal{M}} \s\leq n-4$.

When $n=4$ or under the local energy condition 
\begin{equation}\label{energycondition}
\int_{B(p_{i},r)}|\Rm(g_{i})|^{n/2}_{g_{i}}d\nu_{g_{i}}\leq E(r)<\infty, \quad \forall i,r,
\end{equation}
then the limit is an orbifold Ricci shrinker, in particular $\s$ is discrete and $f$ is critical at every point in $\s$.

Finally, if we assume a lower curvature bound $\Rm(g_{i}) \geq \underline{K}$, then we have convergence to a smooth Ricci shrinker in the pointed smooth Cheeger-Gromov sense.
\end{prop}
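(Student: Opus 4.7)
The plan is to combine a Gromov--Hausdorff pre-compactness argument with an $\varepsilon$-regularity theorem adapted to the shrinker equation, and then control the singular set via a quantitative stratification in the spirit of Cheeger--Naber. First I would use the entropy lower bound $\mu_i\geq \underline{\mu}$ together with Perelman's monotonicity for the associated self-similar Ricci flows $g_{i,t}=(1-t)\psi_{i,t}^{\ast}g_i$ to produce a uniform $\kappa$-non-collapsing bound, i.e.\ $\Vol(B(x,r))\geq \kappa(r)r^{n}$ at all scales up to $1$ on compact subsets of each $M_i$. Combined with the Euclidean volume upper bound \eqref{volumegrowth} and the uniform quadratic growth of $f_i$, this gives uniform doubling on balls around $p_i$ and a uniform bound on the number of disjoint $1$-balls in any $B(p_i,r)$. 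Passing to a subsequence, $(M_i,d_{g_i},p_i)$ converges in the pointed Gromov--Hausdorff topology to a complete length space $(\X,d,p)$ satisfying the volume bounds in point~3 of Definition~\ref{singularRS}.

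Next I would establish an $\varepsilon$-regularity theorem of the following type: there exist $\varepsilon_0=\varepsilon_0(n,\underline{\mu})$ and $C$ such that if $x\in M_i$ with $\varrho_i(x)\leq R$ satisfies
\begin{equation*}
r^{4-n}\int_{B(x,r)}|\Rm|^{n/2}d\nu_{g_i}\leq \varepsilon_0
\end{equation*}
at some scale $r\leq 1$, then $\sup_{B(x,r/2)}|\Rm|\leq Cr^{-2}$ together with smooth derivative estimates. This is proved by standard Moser iteration/Morrey estimates applied to the elliptic inequality $\Delta|\Rm|\geq -c|\Rm|^2$ inherited from the shrinker identity $\Delta_f \Rm = \Rm - \Rm\ast \Rm$, using non-collapsing to convert the integral bound into a pointwise one. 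Define the regular set $\reg\subseteq \X$ as the locus where such an $\varepsilon$-regularity bound eventually holds along the sequence at a uniform scale; by Arzel\`a--Ascoli on the rescaled harmonic-coordinate metrics, the restrictions $\phi_i^\ast g_i$ converge in $C^\infty_{\mathrm{loc}}(\reg)$ to a smooth Ricci shrinker metric $(g,f)$, giving an exhaustion $U_i\subseteq \reg$ and embeddings $\phi_i$ as in Definition~\ref{def.singularCG}. Combined with the global GH approximation, this yields pointed singular Cheeger--Gromov convergence, and the shrinker identity together with $S\geq 0$ (from \cite{rps}, which passes to the limit) upgrades $\X$ to a singular Ricci shrinker.

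To control the singular set $\s=\X\setminus\reg$, I would apply Cheeger--Naber's quantitative stratification to the tangent-cone structure of $\X$: non-collapsed Ricci limits with a two-sided $\Ric$ bound have $\dim_\mathcal{M}\s\leq n-4$ because every tangent cone splits at most $\R^{n-4}$ times as a metric cone factor before becoming a cone over a non-smooth link; the shrinker PDE gives effective two-sided Ricci bounds on all compact subsets through \eqref{eq.shrinker} and local bounds on $|\nabla^2 f|$. For the orbifold improvement in the $L^{n/2}$-energy case \eqref{energycondition}, a covering argument using $\varepsilon$-regularity shows that at most finitely many scales can fail $\varepsilon$-regularity in $B(p_i,r)$, so singular points are isolated in the limit, and a tangent-cone analysis together with the rigidity of volume-cone implies tangent cones are flat $\R^n/\Gamma$; in dimension $n=4$ the bound \eqref{energycondition} is automatic from Gauss--Bonnet applied to the self-similar flow. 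Finally, under a uniform lower bound $\Rm(g_i)\geq\underline{K}$ combined with the shrinker equation and the upper bound on $\Ric$ coming from $\nabla^2 f$, one obtains a two-sided sectional-curvature bound on compact sets, and the classical Cheeger--Gromov compactness theorem yields smooth convergence with $\s=\emptyset$.

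The main obstacle is the $\varepsilon$-regularity step: one must extract just enough PDE structure from the shrinker identity to run Moser iteration without assuming any a priori pointwise curvature bound, and to do this uniformly along the sequence using only the entropy lower bound. Once this is in place, the stratification and the orbifold/smooth specialisations follow from fairly standard machinery.
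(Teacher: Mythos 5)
This Proposition is not proved in the paper: it is a composite citation of Theorem 1.1 and Theorem 8.6 in \cite{LLW}, Theorem 1.1 and Theorem 2.5 in \cite{hm}, Theorem 1.1 in \cite{hm4} (and implicitly \cite{HLW} for the structure theory). There is therefore nothing in the paper to compare your argument against, but it is still worth assessing whether your sketch could in principle be turned into a proof, since you are essentially re-deriving those references.

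Your overall architecture (entropy $\Rightarrow$ $\kappa$-non-collapsing $\Rightarrow$ GH precompactness; $\varepsilon$-regularity $\Rightarrow$ local $C^\infty$ convergence on a regular set; quantitative stratification $\Rightarrow$ codimension $\geq 4$; $L^{n/2}$ energy $\Rightarrow$ orbifold; two-sided curvature $\Rightarrow$ smooth Cheeger--Gromov) matches the strategy of \cite{hm,hm4,LLW,HLW}. However, there is one step where the argument as written would fail. You invoke Cheeger--Naber's codimension-$4$ theorem for ``non-collapsed Ricci limits with a two-sided $\Ric$ bound,'' and you justify the two-sided bound by saying that the shrinker equation together with ``local bounds on $|\nabla^2 f|$'' gives effective Ricci bounds. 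This is circular: a bound on $|\nabla^2 f|$ is equivalent, via $\Ric = \tfrac12 g - \nabla^2 f$, to the very two-sided Ricci bound you want, and neither is available a priori on a general Ricci shrinker with only an entropy lower bound. What the shrinker equation \emph{does} give is a constant Bakry--\'Emery Ricci tensor $\Ric_f = \tfrac12 g$, and the correct route (which is what \cite{HLW,LLW} actually carry out) is to extend the Cheeger--Colding/Cheeger--Naber almost-splitting and quantitative stratification machinery to the weighted/Bakry--\'Emery setting with the measure $e^{-f}\,d\nu_g$, using the polynomial growth of $f$ and the non-collapsing to control the weight. This adaptation is the genuine content of those papers; it cannot be replaced by a direct appeal to the unweighted Cheeger--Naber theorem. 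Your $\varepsilon$-regularity step has a similar issue hidden inside: the elliptic inequality for $|\Rm|$ you quote needs the weighted Laplacian $\Delta_f$ and the associated weighted Sobolev inequality (derived from the log-Sobolev inequality coming from the entropy bound), rather than the unweighted $\Delta$, to close the Moser iteration uniformly. The $n=4$ orbifold improvement via Chern--Gauss--Bonnet and the smooth compactness under a lower curvature bound are stated correctly in outline.
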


\begin{rem} We point out that condition $\Rm(g_{i})\geq \underline{K}$ actually implies the energy condition \eqref{energycondition}.
\end{rem}

We are going to work with various moduli spaces of Ricci shrinkers. We will always assume that the shrinkers under consideration are $n$-dimensional, with basepoint $p$, entropy bounded below $\mu(g)\geq \underline{\mu}$, and energy condition \eqref{energycondition} if $n\geq 5$.

First of all, denote by $\m[n,\underline{\mu},E(r),r_{0}]$ the moduli space formed by the Ricci shrinkers $M$ satisfying the above conditions as well as $|\nabla{f}|>0$ on $D(r_{0},\infty)$. In the following, all moduli spaces $\m$ of our interest will satisfy $\m\subseteq \m[n,\underline{\mu},E(r),r_{0}]$ for some appropriate $r_{0}>0$ and therefore Proposition \ref{thm.endbound} implies that $\#\mathcal{E}(M)\leq \Lambda=\Lambda(n,\underline{\mu},r_{0})$ for every $M\in \m$. We can therefore write 
\begin{equation*}
\m=\bigsqcup_{N\leq \Lambda}\m_{N},
\end{equation*}
where $\m_{N}$ denotes the moduli sub-space formed by those shrinkers in $\m$ with \emph{exactly} $N$ ends. Starting with $\m_N[n,\underline{\mu},E(r),r_{0}]$, a direct application of Theorem \ref{thm.limitendbound} and Theorem \ref{LLWcompactness} gives the following.

\begin{cor}\label{cor1}
The boundary points of the moduli space $\m_N[n,\underline{\mu},E(r),r_{0}]$ are limit orbifold Ricci shrinkers with at most $N$ ends.
\end{cor}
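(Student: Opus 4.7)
A boundary point of $\m_N[n,\underline{\mu},E(r),r_{0}]$ is, by definition, the pointed singular Cheeger--Gromov limit of a sequence $(M_{i},g_{i},f_{i},p_{i})_{i\in\N}$ with each $M_{i}\in \m_N[n,\underline{\mu},E(r),r_{0}]$. The plan is to assemble the already-established compactness and semicontinuity results and verify that their hypotheses are in force along such a sequence.

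First, I would invoke the compactness statement of Proposition \ref{LLWcompactness}. Since each $M_{i}$ has entropy at least $\underline{\mu}$ and (by definition of the moduli space) either satisfies the local energy bound \eqref{energycondition} or has dimension $n=4$, a subsequence converges in the pointed singular Cheeger--Gromov sense to a singular Ricci shrinker $\X$, and moreover to an \emph{orbifold} Ricci shrinker. In particular the singular set $\s\subseteq \X$ is discrete. This already gives the qualitative part of the statement: boundary points of $\m_N$ are limit orbifold Ricci shrinkers.

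Second, to bound the number of ends of $\X$ I would apply Theorem \ref{thm.limitendbound}(1) to the (sub)sequence $(M_{i})\to \X$. Its two hypotheses are available: the condition $|\nabla f_{i}|>0$ on $\{\varrho_i>r_{0}\}$ is built into the definition of $\m_N[n,\underline{\mu},E(r),r_{0}]$, and the required partial discreteness of the singular set, namely the existence of a compact $K\subseteq \X$ with $\s\cap(\X\setminus K)$ discrete, is immediate from $\s$ being discrete (one may even take $K=\emptyset$). Thus
\begin{equation*}
\#\mathcal{E}(\X) \leq \liminf_{i\to\infty}\#\mathcal{E}(M_{i}) = N,
\end{equation*}
which is the quantitative conclusion.

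There is essentially no analytic obstacle beyond what is already packaged in the cited results; the only substantive verification is that the orbifold conclusion of Proposition \ref{LLWcompactness} delivers exactly the discreteness of $\s$ that Theorem \ref{thm.limitendbound}(1) requires, which it does. The cosmetic subtlety is only that, when $n\geq 5$, one must make sure the energy condition passes along any subsequence extracted via Proposition \ref{LLWcompactness}; this is automatic because \eqref{energycondition} is a per-element assumption that is preserved under passing to subsequences.
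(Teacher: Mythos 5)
Your argument is correct and is exactly what the paper intends: Corollary \ref{cor1} is stated in the paper without a written-out proof, preceded only by the remark that it follows from a direct application of Theorem \ref{thm.limitendbound} and Proposition \ref{LLWcompactness}, and your proof fills that in faithfully. In particular you correctly identify that the orbifold conclusion of Proposition \ref{LLWcompactness} supplies the discreteness of $\s$ needed for the hypothesis of Theorem \ref{thm.limitendbound}(1), and that the no-critical-points condition $|\nabla f_i|>0$ on $\{\varrho_i>r_0\}$ is built into the definition of the moduli space, so the bound $\#\mathcal{E}(\X)\leq\liminf_i\#\mathcal{E}(M_i)=N$ follows.
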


Now suppose that we have a scalar curvature bound. In this case it is possible to say something more. Consider the moduli space $\m[n,\underline{\mu},E(r),A]$ of $n$-dimensional Ricci shrinkers with entropy bounded below $\mu(g)\geq \underline{\mu}$, energy condition \eqref{energycondition} (if $n\geq 5$), and bounded scalar curvature $S\leq A$. We have the following result.

\begin{cor}\label{cor2}
The boundary points of the moduli space $\m_N[n,\underline{\mu},E(r),A]$ are given by limit orbifold Ricci shrinkers with singular locus contained in a compact set and exactly $N$ ends.
\end{cor}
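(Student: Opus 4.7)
The plan is to refine Corollary \ref{cor1} by exploiting the uniform scalar curvature bound $S\leq A$, which enforces two extra properties: (a) the singular locus of any orbifold limit is contained in a compact set, and (b) the number of ends in the limit is exactly $N$ (rather than merely $\leq N$). Property (a) will come from combining the scalar curvature bound with the orbifold fact that $f$ is critical at every singular point, while (b) will then follow from the sharper statement in Theorem \ref{thm.limitendbound}\,(2).

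First I would note that on any Ricci shrinker with $S\leq A$, the constancy of $c(g)=S+|\nabla f|^2-f$ together with the definition $\varrho=2\sqrt{f+c(g)}$ yields
\[
|\nabla f|^2 \;=\; \tfrac{\varrho^2}{4}-S \;\geq\; \tfrac{\varrho^2}{4}-A.
\]
Hence $|\nabla f|>0$ on $\{\varrho>r_0\}$ with $r_0:=2\sqrt{A}$, uniformly across the moduli space, and therefore $\m_N[n,\underline{\mu},E(r),A]\subseteq \m_N[n,\underline{\mu},E(r),r_0]$. Proposition \ref{LLWcompactness} then produces a subsequential orbifold Ricci shrinker limit $\X$, and because the convergence is smooth on $\reg$ (so that $c(g_i)\to c(g)$ and $S_i\to S$ there), both the identity $|\nabla f|^2=\varrho^2/4-S$ and the bound $S\leq A$ descend to the regular part of $\X$. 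In particular $|\nabla f|>0$ on $\reg\cap\{\varrho>r_0\}$.

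Next I would localise the singular set. Any $x_0\in\s$ is an orbifold singularity at which $f$ is critical, so $|\nabla f|$ vanishes at $x_0$ on the local orbifold cover and hence tends to zero along sequences in $\reg$ approaching $x_0$. If $\varrho(x_0)>r_0$ were to hold, continuity of $\varrho$ would give a punctured neighbourhood of $x_0$ in $\reg$ on which $|\nabla f|^2\geq \varrho^2/4-A$ is bounded away from zero, contradicting the vanishing just established. Therefore $\s\subseteq D[0,r_0]$, which is compact since $\varrho$ is proper. With $\s$ compact and $|\nabla f|>0$ on $\{\varrho>r_0\}$ for both the sequence and the limit, the hypotheses of Theorem \ref{thm.limitendbound}\,(2) are met and yield $\#\mathcal{E}(\X)=\lim_{i\to\infty}\#\mathcal{E}(M_i)=N$.

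The key step is the localisation of $\s$: it hinges on the orbifold fact that $|\nabla f|$ vanishes at each singular point together with the quadratic lower bound on $|\nabla f|$ coming from the scalar curvature bound, two pieces of information that are incompatible at points far from the basepoint. The remaining steps amount to routine passage to the limit on $\reg$.
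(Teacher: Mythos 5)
Your proposal is correct and follows essentially the same route as the paper's proof: extract the uniform bound $|\nabla f|^2=\varrho^2/4-S\geq \varrho^2/4-A$ to get the no-critical-point hypothesis for the sequence and (by smooth convergence on $\reg$) for the limit, combine this with the orbifold fact that $f$ is critical at singular points to deduce $\s\subseteq D[0,r_0]$, and then invoke Theorem \ref{thm.limitendbound}\,(2). The only cosmetic difference is that you deduce $|\nabla f|>0$ on the limit directly from the limiting curvature identity, whereas the paper also phrases this as the sequence bound $|\nabla f_i|\geq c_0>0$ passing to the limit; both routes are equivalent.
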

\begin{proof}
Let $(M_{i},g_{i},f_{i},p_{i})_{i \in \N}$ be a sequence in $\m_N[n,\underline{\mu},E(r),A]$. Up to a subsequence, thanks to Theorem \ref{LLWcompactness}, we have convergence to an orbifold Ricci shrinker $\X$. Observe that condition $S_{i}\leq A$ passes to the limit, giving $S\leq A$ on $\reg\subseteq \X$. From this and \eqref{auxilliaryconstant}, \eqref{fgrowth} it follows that there exists $r_{0}>0$ such that $|\nabla f_{i}|\geq c_0>0$ on $D_{i}(r_{0},\infty)$ for every $i \in \N$ and $|\nabla f|\geq c_0>0$ on $D(r_{0},\infty)\cap \reg$. The lower bound $|\nabla f_{i}|\geq c_0>0$ implies that the limit $f$ cannot have critical points on $D(r_{0},\infty)$ and by Theorem \ref{LLWcompactness} it follows that $\s \subseteq \X$ is contained in $D[0,r_0]$. Therefore we can apply Point 2 of Theorem \ref{thm.limitendbound}, obtaining $\#\mathcal{E}(\X)=N$.  
\end{proof}

Under additional conditions it is possible to derive some compactness results. Let $\m[n,\underline{\mu},\underline{K},r_{0},c_0]$ be the moduli space of Ricci shrinkers with curvature bounded below $\Rm\geq \underline{K}>-\infty$ and with $|\nabla f|\geq c_0>0$ on $D(r_{0},\infty)$. We claim that every $\m_N[n,\underline{\mu},\underline{K},r_{0},c_0]$ is compact. 

\begin{cor}\label{cor3} 
The moduli space $\m_N[n,\underline{\mu},\underline{K},r_{0},c_0]$ is compact with respect to the pointed smooth Cheeger-Gromov convergence.
\end{cor}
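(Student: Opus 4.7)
The plan is to show that this corollary follows by directly combining the strongest form of Proposition \ref{LLWcompactness} with Point 2 of Theorem \ref{thm.limitendbound}. Let $(M_i, g_i, f_i, p_i)_{i \in \N}$ be a sequence in $\m_N[n,\underline{\mu},\underline{K},r_0,c_0]$. Since we have a uniform lower curvature bound $\Rm(g_i) \geq \underline{K}$, Proposition \ref{LLWcompactness} guarantees that, after passing to a subsequence, the $(M_i, g_i, f_i, p_i)$ converge in the pointed smooth Cheeger-Gromov sense to a \emph{smooth} Ricci shrinker $\X = (M, g, f, p)$. It only remains to verify that $\X$ belongs to the same moduli space $\m_N$.

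The next step is to check that the defining bounds pass to the limit. The entropy lower bound $\mu(g) \geq \underline{\mu}$ and the curvature lower bound $\Rm(g) \geq \underline{K}$ are preserved in the smooth limit. For the gradient condition, take any $x \in M$ with $\varrho(x) > r_0$. By smooth convergence on compact sets and the fact that the auxiliary constants $c(g_i)$ converge to $c(g)$ (as used in the proof of Proposition \ref{prop.limitsingularRS}), we have $\varrho_i \circ \phi_i^{-1} \to \varrho$ locally uniformly on a neighbourhood of $x$, so for $i$ large the corresponding point lies in $D_i(r_0, \infty)$ and hence satisfies $|\nabla f_i| \geq c_0$. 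Passing to the limit yields $|\nabla f(x)| \geq c_0$, so $\X$ inherits the lower bound $|\nabla f| \geq c_0$ on $D(r_0, \infty)$.

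It remains to show $\#\mathcal{E}(\X) = N$, which is the key content. Here we invoke Point 2 of Theorem \ref{thm.limitendbound}: its hypotheses are satisfied because $|\nabla f_i| > 0$ on $\{\varrho_i > r_0\}$ for every $i$, the limit likewise satisfies $|\nabla f| > 0$ on $\{\varrho > r_0\}$ by the previous paragraph, and the singular set $\s$ of $\X$ is empty (hence trivially contained in a compact subset) because the convergence is smooth. The conclusion $\#\mathcal{E}(\X) = \lim_{i \to \infty} \#\mathcal{E}(M_i) = N$ then places $\X$ in $\m_N[n,\underline{\mu},\underline{K},r_0,c_0]$, completing the compactness argument.

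In this setting there is no serious obstacle: the lower curvature bound already rules out the formation of singularities in the limit, and the uniform strict lower bound on $|\nabla f|$ away from $D[0,r_0]$ is exactly what is needed both to preserve the ``no critical points at infinity'' assumption in the limit and to ensure the continuity statement from Theorem \ref{thm.limitendbound}. The corollary is therefore a clean combination of the two theorems, and its proof should be quite short.
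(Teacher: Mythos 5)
Your argument is correct and follows essentially the same route as the paper's proof: smooth compactness via Proposition \ref{LLWcompactness} with the lower curvature bound, passage of the defining estimates (including the gradient lower bound) to the limit, and an appeal to Point 2 of Theorem \ref{thm.limitendbound} to conclude $\#\mathcal{E}(\X)=N$. The additional detail you provide on how $|\nabla f|\geq c_0$ passes to the limit is exactly what the paper leaves implicit.
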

\begin{proof}
If $(M_{i},g_{i},f_{i},p_{i})_{i\in \N}$ is a sequence in $\m_N[n,\underline{\mu},\underline{K},r_{0},c_0]$, thanks to Theorem \ref{LLWcompactness} we have convergence, up to a subsequence, to a smooth Ricci shrinker $(M,g,f,p)$ in the pointed smooth Cheeger-Gromov sense. Hence $M$ satisfies the estimates $\mu(g)\geq \underline{\mu}$, $\Rm_{g}\geq \underline{K}$, and $|\nabla f|\geq c_0>0$ on $D(r_{0},\infty)$. At this point we can invoke Point 2 of Theorem \ref{thm.limitendbound} obtaining $\#\mathcal{E}(M)=N$. This yields $M\in \m_N[n,\underline{\mu},\underline{K},r_{0},c_0]$, and the proof is complete. 
\end{proof}

In particular, also the subspace $\m_N[n,\underline{\mu},\underline{K},A]$ of $n$-dimensional Ricci shrinkers with lower entropy bound $\mu(g)\geq \underline{\mu}$, lower curvature bound $\Rm\geq \underline{K}$, upper scalar curvature bound $S \leq A$, and exactly $N$ ends is compact with respect to pointed smooth Cheeger-Gromov convergence.

Finally, in dimension $n=4$, we can use the following result of Munteanu-Wang.

\begin{prop}[Theorem 2.4 in \cite{mw15}]\label{mw-primo} 
Let $M$ be a $4$-dimensional Ricci shrinker with bounded scalar curvature $S \leq A$. Then there exists $r_{1}>0$,
depending only on $A$, such that 
\[
\sup_{M} |\Rm| \leq C
\]
where $C>0$ is a constant depending on $A$ and on $\sup_{D[0,r_{1})} |\Rm|$.
\end{prop}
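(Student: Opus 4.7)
The plan is to establish the global curvature bound by a weighted Moser iteration adapted to the shrinker structure, exploiting the special algebraic features of the curvature tensor in dimension four. Since $|\Rm|$ is already controlled on $D[0,r_{1})$ by hypothesis, it will suffice to obtain an $L^{\infty}$ estimate on the complementary region $D[r_{1},\infty)$, with $r_{1}=r_{1}(A)$ chosen sufficiently large.

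The starting point would be the standard Bochner-type identities on a Ricci shrinker. Applying the drift Laplacian $\Delta_{f}:=\Delta-\langle \nabla f,\nabla\cdot\rangle$, which is self-adjoint with respect to the weighted measure $e^{-f}\,d\nu_{g}$, one obtains schematically $\Delta_{f}\Rm=\Rm+\Rm\ast\Rm$ and $\Delta_{f}\Ric=\Ric-2\,\Rm\ast\Ric$, and hence
\begin{equation*}
\Delta_{f}|\Rm|^{2}\geq 2|\nabla\Rm|^{2}+|\Rm|^{2}-c_{n}|\Rm|^{3}.
\end{equation*}
Using \eqref{fgrowth}, the finiteness of the weighted volume, and integration by parts against $e^{-f}\,d\nu_{g}$ on exhausting sublevels of $f$, together with a Gauss--Bonnet--Chern type computation specific to dimension four, I would first derive an initial weighted estimate of the form $\int_{M}|\Rm|^{2}e^{-f}\,d\nu_{g}\leq C(A)$, which is the critical integrability scaling. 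I would then split the curvature into its scalar, traceless Ricci, and Weyl components: $S$ is bounded by hypothesis, while for $|\Ric|$ a weighted Moser iteration applied to its own drift-Laplace equation on the exterior region $\{\varrho\geq r_{1}\}$, starting from the $L^{2}$ datum just obtained, yields $|\Ric|\leq C(A)$ there.

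The main obstacle will be the final step, controlling the Weyl tensor $W$. In dimension four the self-dual/anti-self-dual splitting $W=W^{+}+W^{-}$ gives two separate elliptic equations of the form
\begin{equation*}
\Delta_{f}W^{\pm}=W^{\pm}+W^{\pm}\ast W^{\pm}+\Phi^{\pm}(\Ric,S),
\end{equation*}
where $\Phi^{\pm}$ is bounded by the previous step. The cubic self-interaction $|W^{\pm}|^{3}$ sits precisely at the critical scaling for Moser iteration in dimension four, so the iteration will close only because the initial weighted $L^{2}$ bound on $|W|$ is sharp and because a good weighted Sobolev inequality is available on exterior regions $\{\varrho\geq r_{1}\}$; the boundedness of $S$ provides exactly the slack needed to absorb the borderline cubic term. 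A careful choice of cutoff functions adapted to the potential $f$ should then yield the pointwise bound $|W^{\pm}|\leq C(A)$ on $D[r_{1},\infty)$, and combining this with the Ricci and scalar controls, together with the assumed bound on $D[0,r_{1})$, gives the claimed global estimate on $|\Rm|$.
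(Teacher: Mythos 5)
This proposition is stated in the paper without proof: it is quoted directly from Munteanu--Wang, \emph{Geometry of shrinking Ricci solitons} (Theorem~2.4 of \cite{mw15}), so there is no internal argument to compare against. Measured against what that reference actually does, your sketch is a genuinely different route, and as written it has two concrete gaps.

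First, the initial estimate $\int_{M}|\Rm|^{2}e^{-f}\,d\nu_{g}\leq C(A)$ is not available by ``a Gauss--Bonnet--Chern type computation.'' The Chern--Gauss--Bonnet identity controls $\int(|W|^{2}-2|\mathring{\Ric}|^{2}+\tfrac{1}{6}S^{2})\,d\nu_{g}$ on a \emph{closed} four-manifold with the \emph{unweighted} measure; on a noncompact shrinker with the weight $e^{-f}$ neither the topological normalisation nor the sign structure survives, and there is no known weighted analogue. What \emph{is} available from the soliton structure is the Ricci estimate $\int_{M}|\Ric|^{2}e^{-f}\,d\nu_{g}\leq \tfrac{1}{2}\int_{M}S\,e^{-f}\,d\nu_{g}\leq C(A)$, obtained by integrating $\Delta_{f}S=S-2|\Ric|^{2}$ against $e^{-f}$. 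This does not control $\int|W|^{2}e^{-f}$, and in fact a bound on $\int|\Rm|^{2}e^{-f}$ depending only on $A$ cannot be expected: the theorem's conclusion itself must involve $\sup_{D[0,r_{1})}|\Rm|$, so any preliminary $L^{2}$ bound should as well. Second, even granting such an $L^{2}$ datum, you identify the Moser iteration for $W^{\pm}$ as sitting exactly at critical scaling, and then assert that bounded $S$ ``provides exactly the slack needed to absorb the borderline cubic term''; this is precisely the point that needs a proof, not an assertion. At critical exponent, iteration does not close from a global $L^{2}$ bound alone --- one needs a smallness/$\varepsilon$-regularity input or an improved structure for the nonlinearity --- and nothing in the sketch supplies it.

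For comparison, the Munteanu--Wang argument behind the cited theorem avoids iteration altogether and is maximum-principle based. The dimension-four input is a \emph{pointwise} algebraic estimate, obtained from the second Bianchi identity and the Weyl/Ricci decomposition, controlling $|\Rm|$ in terms of $|\Ric|$, $|\nabla\Ric|$ and $S$. This is then fed into the drift-Laplacian equation for $|\Ric|^{2}$, together with the weighted $L^{2}$ Ricci estimate above and the quadratic growth of $f$ (hence $\varrho$), to run a maximum principle on exterior regions $\{\varrho\geq r_{1}\}$; the bound on $D[0,r_{1})$ enters as the boundary/initial control. If you want to salvage the Moser route, you would at minimum need to (i) replace the Gauss--Bonnet step by an honest derivation of a weighted $L^{2}$ or $L^{2}_{\mathrm{loc}}$ bound for the Weyl part, with the correct dependence on $\sup_{D[0,r_{1})}|\Rm|$, and (ii) explain why the critical cubic term for $W^{\pm}$ can be absorbed --- for instance, by first proving a polynomial-decay estimate for $|W|$ (as Munteanu--Wang do for $|\Ric|$) so that the integral over balls at scale $\varrho^{-1}$ becomes small.
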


Setting $\m[4,\underline{\mu}, A,B]$ to be the moduli space of $4$-dimensional Ricci shrinkers with entropy lower bound, bounded scalar curvature $S\leq A$ and $|\Rm|\leq B$ on $D[0,r_{1})$, where $r_{1}>0$ is as in Theorem \ref{mw-primo} (in particular depending on $A$), then Theorem \ref{mw-primo} gives a global bound $|\Rm|\leq C$ on $M$ for every element of this moduli space, where $C=C(A,B)>0$. In particular, we can use the above smooth compactness result to conclude the following.

\begin{cor}\label{cor4}
The moduli space $\m_N[4,\underline{\mu}, A,B]$ is compact with respect to the pointed smooth Cheeger-Gromov convergence.  
\end{cor}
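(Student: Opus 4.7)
The plan is to reduce Corollary \ref{cor4} to Corollary \ref{cor3} by producing, from the hypotheses on $\m_N[4,\underline{\mu}, A,B]$, all the ingredients needed to put any sequence inside some $\m_N[4,\underline{\mu},\underline{K},r_0,c_0]$.

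First I would invoke Proposition \ref{mw-primo}: by definition every element of $\m_N[4,\underline{\mu}, A,B]$ satisfies $S\leq A$ and $|\Rm|\leq B$ on $D[0,r_1)$, where $r_1=r_1(A)$. Hence Proposition \ref{mw-primo} yields a global bound $|\Rm|\leq C$ with $C=C(A,B)$. In particular every $g_i$ satisfies the uniform lower curvature bound $\Rm(g_i)\geq \underline{K}$ with $\underline{K}=-C(A,B)$.

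Next I would extract a uniform positive lower bound on $|\nabla f_i|$ outside a fixed compact region. Using the soliton identity $c(g)=S+|\nabla f|^2-f=-\mu(g)$ and the fact that $\varrho^2=4(f+c(g))$, we have $|\nabla f_i|^2=\tfrac{1}{4}\varrho_i^{2}-S\geq \tfrac{1}{4}\varrho_i^{2}-A$ on $M_i$. Choosing $r_0=r_0(A)$ large enough (for instance $r_0=2\sqrt{A+1}$) and $c_0=1$, this gives $|\nabla f_i|\geq c_0>0$ on $D_i(r_0,\infty)$ for every $i$. Note that this $r_0$ depends only on $A$, independently of the sequence.

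Now given any sequence $(M_i,g_i,f_i,p_i)$ in $\m_N[4,\underline{\mu}, A,B]$, the above bounds show it lies in $\m_N[4,\underline{\mu},\underline{K},r_0,c_0]$ with constants depending only on $A$, $B$, $\underline{\mu}$. Applying Corollary \ref{cor3}, a subsequence converges in the pointed smooth Cheeger-Gromov sense to a smooth limit $(M,g,f,p)$ with exactly $N$ ends, $\mu(g)\geq \underline{\mu}$, $\Rm_g\geq \underline{K}$ and $|\nabla f|\geq c_0$ on $D(r_0,\infty)$. It remains to check that $M$ again belongs to $\m_N[4,\underline{\mu},A,B]$, that is, that $S\leq A$ globally on $M$ and $|\Rm|\leq B$ on $D[0,r_1)$. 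Both properties pass to the smooth Cheeger-Gromov limit by locally uniform smooth convergence (combined with the fact that the sub-level set $D[0,r_1)$ corresponds, under the exhausting diffeomorphisms provided by Cheeger-Gromov convergence, to the analogous sub-level sets along the sequence via continuity of $\varrho$). This closes $\m_N[4,\underline{\mu},A,B]$ under the convergence and completes the proof.

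There is no real obstacle here; the only point requiring minor care is verifying that the pointwise bound $|\Rm|\leq B$ on the $\varrho$-sublevel set $D[0,r_1)$ survives the limit, but this follows directly from smooth convergence on compact subsets and the fact that $r_1$ depends only on $A$, hence is preserved along the sequence.
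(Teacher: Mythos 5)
Your argument is correct and follows the paper's intended (but implicit) reasoning: invoke Proposition \ref{mw-primo} to upgrade the hypotheses to a uniform two-sided curvature bound $|\Rm|\leq C(A,B)$, then feed this together with the $S\leq A$ bound (which, via $|\nabla f|^2=\tfrac14\varrho^2-S$, supplies the no-critical-points-at-infinity condition) into the smooth compactness machinery of Corollary \ref{cor3} and Proposition \ref{LLWcompactness}. The paper states this corollary without proof, essentially gesturing at the same reduction in the preceding paragraph; you have supplied exactly the missing bookkeeping, including the check that the limit again satisfies $S\leq A$ globally and $|\Rm|\leq B$ on $D[0,r_1)$ (using that $r_1=r_1(A)$ is fixed and that $\varrho_i\circ\phi_i\to\varrho$ locally uniformly), which is what makes $\m_N[4,\underline{\mu},A,B]$ genuinely closed under the convergence rather than merely pre-compact.
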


\section{Asymptotically conical ends and nearly conical regions}\label{section-conicalends}

The main goal of this section is to study curvature and volume estimates on asymptotically conical ends and nearly conical regions as in Definitions \ref{asymptconical}  and \ref{nearannulus}, respectively. We start by recalling the definition and basic properties of a Riemannian cone.

\begin{defn} 
Let $(\Sigma,g_{\Sigma})$ be a connected, compact Riemannian manifold. The \emph{Riemannian cone} over $\Sigma$ is the product manifold $\mathcal{C}_{\Sigma}=(0,\infty)\times \Sigma$ endowed with the warped metric 
\begin{equation*}
g_{c}=d\r^{2}+\r^{2}g_{\Sigma}.
\end{equation*}
When $I\subseteq (0,\infty)$ is an interval, $\mathcal{C}_{\Sigma}(I)=I\times \Sigma$ endowed with the cone metric $g_{c}$ is called a \emph{conical annulus}. We denote points in the cone by $x=(\r,y)$ and $\r\colon \mathcal{C}_{\Sigma}\to (0,\infty)$ is the \emph{radial coordinate}. 
\end{defn}

\begin{lemma}\label{lemmacono} 
Consider $\mathcal{C}_{\Sigma}(a,\infty)$ with its intrinsic Riemannian distance $d_{c}$, where $a\geq 0$.
\begin{enumerate}
\item Fix a point $o \in \mathcal{C}_{\Sigma}(a,\infty)$. Then
\begin{equation}\label{stimadistcono}
|\r(x)-\r(o)|\leq d_{c}(x,o)\leq \r(o)+\r(x)-2a+a\diam(\Sigma)
\end{equation}
for every $x\in \mathcal{C}_{\Sigma}(a,\infty)$. In particular 
\begin{equation*}
\lim_{\r(x)\to \infty}\frac{d_{c}(o,x)}{\r(x)}=1.
\end{equation*}
\item A subset $\Omega \subseteq \mathcal{C}_{\Sigma}(a,\infty)$ is bounded if and only if $\sup_{x\in \Omega}\r(x)$ is finite. In this case, $\mathcal{C}_{\Sigma}(a,\infty)\setminus \Omega$ has only one unbounded connected component.
\end{enumerate}
\end{lemma}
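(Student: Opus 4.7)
The plan is to handle Part 1 by combining a Lipschitz bound for the lower estimate with an explicit piecewise path construction for the upper estimate, and then to deduce Part 2 as a corollary of the resulting distance comparison.

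For the lower bound in \eqref{stimadistcono}, I would note that $|\nabla \r|_{g_c} \equiv 1$ on $\mathcal{C}_\Sigma$, so $\r$ is $1$-Lipschitz with respect to $d_c$, giving $|\r(x) - \r(o)| \leq d_c(x, o)$ at once. For the upper bound, writing $o = (\r(o), y_0)$ and $x = (\r(x), y_1)$, I would build a three-segment test curve lying inside $\mathcal{C}_\Sigma(a,\infty)$. For any sufficiently small $\varepsilon > 0$, concatenate (i) the radial segment from $(\r(o), y_0)$ down to $(a + \varepsilon, y_0)$, (ii) a minimising $\Sigma$-geodesic from $y_0$ to $y_1$ at the fixed radius $a + \varepsilon$, and (iii) the radial segment from $(a+\varepsilon, y_1)$ up to $(\r(x), y_1)$. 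Since $g_c = d\r^2 + \r^2 g_\Sigma$, the three lengths equal $\r(o) - (a+\varepsilon)$, $(a+\varepsilon)\, d_\Sigma(y_0, y_1) \leq (a+\varepsilon)\diam(\Sigma)$, and $\r(x) - (a+\varepsilon)$, respectively. Summing and sending $\varepsilon \to 0^+$ yields the claimed upper bound, and the asymptotic $d_c(o,x)/\r(x) \to 1$ then follows by sandwiching.

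For Part 2, the two inequalities in \eqref{stimadistcono} together show that $d_c(\cdot, o)$ and $\r(\cdot)$ differ by a bounded quantity (with constants depending only on $o$, $a$, and $\diam(\Sigma)$), so $\Omega \subseteq \mathcal{C}_\Sigma(a,\infty)$ is $d_c$-bounded if and only if $\sup_\Omega \r < \infty$. When $\Omega$ is bounded, set $M = \sup_\Omega \r < \infty$; the slab $\{\r > M\}$ is then contained in $\mathcal{C}_\Sigma(a,\infty) \setminus \Omega$ and is path-connected (both factors $(\max\{M, a\}, \infty)$ and $\Sigma$ are), so it lies inside a single connected component $C$ of the complement, which is unbounded. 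Any other unbounded component $C'$ must, by the same $d_c$--$\r$ comparability, have $\sup_{C'} \r = \infty$, and therefore meets $\{\r > M\} \subseteq C$, forcing $C' = C$.

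The only step requiring genuine care is the upper-bound construction when $a > 0$, where the cone vertex is inaccessible and the three-segment curve must stay in the open cone: introducing the auxiliary parameter $\varepsilon > 0$ and letting $\varepsilon \to 0^+$ is precisely what produces the sharp additive constant $-2a + a\diam(\Sigma)$, while for $a = 0$ the same construction collapses to the expected bound $\r(o) + \r(x)$.
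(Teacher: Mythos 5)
Your proof is correct and takes essentially the same approach as the paper: a Lipschitz/length lower bound for $|\r(x)-\r(o)|\leq d_c(x,o)$, an explicit test curve descending near the inner boundary, moving in $\Sigma$, and ascending, with a limiting parameter to get the sharp constant $-2a+a\diam(\Sigma)$; and for Part 2, the observation that a bounded $\Omega$ sits in a bounded radial slab whose unbounded complement $\{\r>M\}$ is connected. The only cosmetic difference is that the paper parameterizes the test curve so that the $\Sigma$-geodesic runs over the whole time interval while $\r(t)$ is piecewise linear, whereas you concatenate three segments; both yield the same length estimate after the elementary inequality $\sqrt{u^2+v^2}\leq|u|+|v|$.
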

\newpage

\begin{proof}\
\begin{enumerate}
\item Let $\gamma\colon [0,1]\to \mathcal{C}_{\Sigma}(a,\infty)$ be any piecewise smooth curve connecting $o$ to $x$. Writing $\gamma(t)=(\r(t),\sigma(t))$, where $\r\colon [0,1]\to (a,\infty)$ and $\sigma\colon [0,1]\to \Sigma$, we have
\begin{equation}\label{stimacurvacono}
\begin{aligned}
|\r(x)-\r(o)|&\leq \int_{0}^{1}|\dot{\r}(t)|dt \leq L_{g_{c}}(\gamma)=\int_{0}^{1}\sqrt{|\dot{\r}(t)|^{2}+\r(t)^{2}|\dot{\sigma}(t)|^{2}}dt \\
&\leq \int_{0}^{1}|\dot{\r}(t)|dt +\int_{0}^{1}|\r(t)||\dot{\sigma}(t)|dt
\end{aligned}
\end{equation}
and the lower bound in \eqref{stimadistcono} follows. To prove the upper bound, suppose that $\sigma\colon [0,1]\to \Sigma$ is a minimising geodesic from $y(o)$ to $y(x)$. Then 
\begin{equation*}
\int_{0}^{1}|\r(t)||\dot{\sigma}(t)|dt =\bigg(\int_{0}^{1}|\r(t)|dt\bigg)d_{\Sigma}(y(o),y(x)) \leq \bigg(\int_{0}^{1}|\r(t)|dt\bigg)\diam(\Sigma). 
\end{equation*}
Take $a<b<\min\{\r(o),\r(x) \}$ and $\varepsilon\in (0,1)$. If we define 
\begin{equation*}
\r(t)=\begin{cases}
\r(o)+(b-\r(o))t/\varepsilon, & \text{if } t\in [0,\varepsilon],\\
b, & \text{if } t\in [\varepsilon,1-\varepsilon],\\
\r(x)+(b-\r(x))(1-t)/\varepsilon, & \text{if } t\in [1-\varepsilon,1],
\end{cases}
\end{equation*}
we obtain 
\begin{equation*}
\int_{0}^{1}|\dot{\r}(t)|dt=\r(o)+\r(x) -2b \quad \text{and} \quad  \int_{0}^{1}|\r(t)|dt=b+\frac{1}{2}\varepsilon(\r(o)-b)+
\frac{1}{2}\varepsilon(\r(x)-b).
\end{equation*}
Substituting in \eqref{stimacurvacono} and taking the limit as $(b,\varepsilon)\to (a,0)$, we obtain the claimed upper bound.

\item The first part follows directly from the preceding point. Moreover, if $\Omega$ is bounded, we have $\Omega\subseteq \mathcal{C}_{\Sigma}(a,b]$ for some $b>a$. Now $\mathcal{C}_{\Sigma}(b,\infty)$ is contained in a unique connected component of $\mathcal{C}_{\Sigma}(a,\infty)\setminus \Omega$, which therefore is unbounded. All other connected components of $\mathcal{C}_{\Sigma}(a,\infty)\setminus \Omega$ are contained in $\mathcal{C}_{\Sigma}(a,b]$ and therefore are bounded.\qedhere
\end{enumerate}
\end{proof}
 
The preceding proposition has some direct consequences for asymptotically conical ends as in Definition \ref{asymptconical}. Note that, given such an asymptotically conical end $E$, we can consider the diffeomorphism $\varphi\colon \mathcal{C}_{\Sigma}(a,\infty)\to E$ from Definition \ref{asymptconical} and write $\r(x)=\r(\varphi^{-1}(x))$ for $x\in E$, thinking of $\r\colon E\to (a,\infty)$ as a radial coordinate. We then obtain the following.

\begin{prop}\label{propconicalend} 
Let $E\subseteq \X$ be a $C^{0}$-asymptotically conical end relative to some compact subset $K$.
\begin{enumerate}
\item Fix a point $o\in E$. If $x\in E$ then 
\begin{equation*}
d(o,x)\to \infty \Longleftrightarrow d_{E}(o,x)\to \infty \Longleftrightarrow \r(x)\to \infty,
\end{equation*}
where $d_{E}$ is the intrinsic Riemannian distance on $E$.
\item $\Omega \subseteq E$ is bounded in $\X$ if and only if it is bounded in $E$ with respect to the intrinsic Riemannian distance.
\item Let $H\supseteq K$ be compact. Then $E\setminus H$ has only one unbounded connected component. In other words there is only one end of $\X$ relative to $H$ contained in $E$.
\item If $b>a$ then $F=\varphi(\mathcal{C}_{\Sigma}(b,\infty))$ is an end of $\X$ relative to the compact subset $K\cup \varphi(\mathcal{C}_{\Sigma}(a,b])$ and it is asymptotically conical. 
More precisely it satisfies Definition \ref{asymptconical} via the diffeomorphism 
$\varphi\vert_{\mathcal{C}_{\Sigma}(b,\infty)}\colon \mathcal{C}_{\Sigma}(b,\infty)\to F$.
\item We have for all $x\in E$
\begin{equation}\label{radialdistance}
\lim_{\r(x) \to \infty}\frac{d(o,x)}{\r(x)}=1.
\end{equation}
The same is true with respect to a general basepoint $p\in M$:
\begin{equation}\label{radialdistancebasepoint}
\lim_{\r(x)\to \infty}\frac{d(p,x)}{\r(x)}=1.
\end{equation}
\end{enumerate}
\end{prop}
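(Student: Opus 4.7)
The plan is to transfer every statement to the Riemannian cone $(\mathcal{C}_{\Sigma}(a,\infty), g_{c})$ via the diffeomorphism $\varphi$, where the distance behaviour is already completely described by Lemma \ref{lemmacono}. The preliminary step is to use the $C^{0}$-decay in \eqref{conicalcondition} at $\ell=0$ to conclude: for any $\varepsilon>0$ there is $R_{0}>0$ such that $(1-\varepsilon)g_{c}\leq \varphi^{\ast}g\leq (1+\varepsilon)g_{c}$ on $\mathcal{C}_{\Sigma}(R_{0},\infty)$. Consequently, as $\r(x)\to\infty$ the intrinsic distance $d_{E}(o,x) = d_{\varphi^{\ast}g}(\varphi^{-1}(o),\varphi^{-1}(x))$ is asymptotic to $d_{c}(\varphi^{-1}(o),\varphi^{-1}(x))$, which by Lemma \ref{lemmacono}(1) is itself asymptotic to $\r(x)$; hence $d_{E}(o,x)/\r(x)\to 1$.

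Point 5 carries the main quantitative content, and its lower estimate is the chief obstacle. The upper estimate $\limsup d(o,x)/\r(x)\leq 1$ is immediate from $d(o,x)\leq d_{E}(o,x)$ combined with the preceding step. For the lower bound, an arbitrary curve in $\X$ from $o$ to $x$ could a priori take shortcuts outside $E$. I would first establish a side-lemma precluding the worst kind of shortcut: if $y_{n}\in E$ with $\r(y_{n})\to\infty$ converged in $\X$ to some $y_{\ast}\in\partial E\subseteq K$, then the bi-Lipschitz equivalence in the far region would provide pairwise disjoint intrinsic balls $B_{d_{E}}(y_{n},c)\subseteq E$ of volume bounded below by $c_{0}c^{n}$ all sitting inside $B(y_{\ast},2c)$, contradicting the two-sided volume bound of Point 3 of Definition \ref{singularRS}. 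Given an almost-minimising curve $\gamma$ in $\X$ from $o$ to $x$ of length at most $d(o,x)+1$, one then tracks when $\gamma$ first enters the far region $\varphi(\mathcal{C}_{\Sigma}(R_{0},\infty))$ and uses the asymptotic $|\nabla\r|_{\varphi^{\ast}g}\to 1$ (another consequence of the $C^{0}$-closeness) to bound
\[
\r(x)-R_{0}\leq (1+\varepsilon)\bigl(d(o,x)+1\bigr);
\]
dividing by $\r(x)$ and letting $\varepsilon\to 0$ yields $\liminf d(o,x)/\r(x)\geq 1$. The version with general basepoint $p$ follows from $|d(o,x)-d(p,x)|\leq d(o,p)=O(1)$.

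With Point 5 in hand, Points 1--4 follow readily as corollaries. For Point 1, the equivalence $d_{E}(o,x)\to\infty\iff \r(x)\to\infty$ is the preliminary step plus Lemma \ref{lemmacono}(1), while $d(o,x)\to\infty\iff \r(x)\to\infty$ is Point 5. Point 2 then chains: $\Omega\subseteq E$ is bounded in $\X$ iff $\sup_{\Omega}d(o,\cdot)<\infty$ iff $\sup_{\Omega}\r<\infty$ iff $\Omega$ is bounded in $(E,d_{E})$. For Point 3, compactness of $H\supseteq K$ together with Point 1 yields $\sup_{H\cap E}\r<\infty$, so $\varphi^{-1}(H\cap E)$ is bounded in the cone; Lemma \ref{lemmacono}(2) then produces a unique unbounded component of $\mathcal{C}_{\Sigma}(a,\infty)\setminus \varphi^{-1}(H\cap E)$, which $\varphi$ transfers to the unique unbounded component of $E\setminus H$, and by Point 2 ``unbounded in $E$'' matches ``unbounded in $\X$''. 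Finally for Point 4, put $K'=K\cup \varphi(\mathcal{C}_{\Sigma}(a,b])$; this is compact since $\varphi(\mathcal{C}_{\Sigma}(a,b])$ is bounded in $\X$ by Point 2, while its limit points in $\X$ not already in the set must have $\r$-preimages approaching $a^{+}$ and hence lie in $\partial E\subseteq K$. Then $F=\varphi(\mathcal{C}_{\Sigma}(b,\infty))$ is open, connected, unbounded in $\X$, and disjoint from every other component of $\X\setminus K$ (separated by $K\subseteq K'$), so it is an end of $\X$ relative to $K'$; and the restriction $\varphi|_{\mathcal{C}_{\Sigma}(b,\infty)}$ inherits \eqref{conicalconditionmetric} and \eqref{conicalcondition}, making $F$ asymptotically conical.
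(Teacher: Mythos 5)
Your proof is correct, but it follows a genuinely different route from the paper's. The paper proves Point~1 first by a curve-escape argument carried out entirely inside $\reg$: given a competitor curve $\gamma$ from $x$ to $o$ it tracks $\gamma$ up to the \emph{first} time it leaves $E$, and uses the fact that $\psi\circ\gamma$ must escape every compact subset of $\mathcal{C}_{\Sigma}(a,\infty)$ not containing $\psi(o)$ to force the curve across a radial annulus of definite width; the $g$-length lower bound then comes from the bi-Lipschitz comparison on the far region. Points~2--4 follow easily, and Point~5 is upgraded from the $\tfrac12$-type bound of Point~1 by a moving-basepoint device (an auxiliary $o_x$ at radius $2\sqrt{\r(x)}$, inside the sub-end $F_x=\varphi(\mathcal{C}_{\Sigma}(\sqrt{\r(x)},\infty))$ where the bi-Lipschitz constant is $1+o(1)$). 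You reverse the order: you prove Point~5 directly via a new volume side-lemma --- packing disjoint cone-comparable balls near a putative limit of $y_n\in E$ with $\r(y_n)\to\infty$ to contradict the two-sided volume comparison in Point~3 of Definition~\ref{singularRS} --- which lets you track a near-minimising curve in $\X$ into the far region $\varphi(\mathcal{C}_{\Sigma}(R_0,\infty))$ where $|\nabla\r|_{\varphi^{\ast}g}\approx1$, and you then derive Points~1--4 as corollaries. The trade-off is clear: the paper's escape argument is purely topological and never touches the volume hypothesis, so it is more elementary and more broadly applicable; your argument invokes the singular-space volume comparison, but in return bypasses the $o_x$ trick and makes Points~1--4 fall out cleanly. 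Two places to tighten: one should track the \emph{last}, not the first, entry of $\gamma$ into $\varphi(\mathcal{C}_{\Sigma}(R_0,\infty))$ (using the side-lemma to see that $\overline{\varphi(\mathcal{C}_{\Sigma}(R_0,\infty))}\setminus\varphi(\mathcal{C}_{\Sigma}(R_0,\infty))=\varphi(\{R_0\}\times\Sigma)$) so that the tail segment lies wholly where the Lipschitz bound on $\r$ holds; and the assertion that $d_E(o,x)/\r(x)\to1$ is ``consequently'' immediate from the far-region bi-Lipschitz bound is too quick, since \eqref{conicalconditionmetric} yields no lower bound $\varphi^{\ast}g\gtrsim g_c$ on the near annulus $\mathcal{C}_{\Sigma}(a,R_0]$. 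The upper estimate needs an explicit comparison path routed through $\mathcal{C}_{\Sigma}(R_0,\infty)$ except for an $O(1)$ tail, and the lower estimate for $d_E$ requires the same curve-tracking you use for $d$; both are small fixes, not structural flaws.
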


\begin{proof}
Recall that by Definition \ref{singularRS} the intrinsic Riemannian metric $d_g$ on $\reg$ induced by $g$ coincides with the restriction $d\vert_{\reg}$. Now denote $d_{c}$ the intrinsic Riemannian distance on the cone $\mathcal{C}_{\Sigma}(a,\infty)$ and let 
$\psi=\varphi^{-1}$. We have $d_{E}\geq d_g\equiv d\vert_{\reg}$.

\begin{enumerate}
\item From \eqref{conicalconditionmetric} it follows that there exist a constant $c>0$ such that $\varphi^{\ast}g \leq c^{2}g_{c}$ 
on the whole $\mathcal{C}_{\Sigma}(a,\infty)$. In particular from \eqref{stimadistcono} we have
\begin{equation*}
d(x,o) \leq d_{E}(x,o) \leq c d_{c}(\psi(x),\psi(o))\leq c \big(r(o)+ \r(x) -2a +a\diam(\Sigma)\big),
\end{equation*}
for every $x\in E$. This says that if $d(x,o)\to \infty$ then $d_{E}(x,o)\to \infty$ and therefore $\r(x)\to \infty$.

Conversely, using \eqref{conicalcondition} it is easy to see that there exists $b>a$ such that $\varphi^{\ast}g \geq \frac{1}{2} g_{c}$ on $\mathcal{C}_{\Sigma}(b,\infty)$. Now take $x\in E$ with $\r(x)>b$ and consider a curve $\gamma\colon [0,1]\to \reg$ from $x$ to $o$. If 
$\gamma$ is entirely contained in $E$ put $t_{\ast}=1$, otherwise there exists $t_{\ast}\in [0,1)$ such that $\gamma(t)\in E$ for $t\in [0,t_{\ast})$ but $\gamma(t_{\ast})\notin E$. In every case consider $\eta\colon [0,t_{\ast})\to C_{\Sigma}(a,\infty)$, where $\eta=\psi\circ \gamma\vert_{[0,t_{\ast})}$. Then $\eta$ exits from every compact $H\subseteq \mathcal{C}_{\Sigma}(a,\infty)$ not containing $\psi(o)$. If this is not the case there exists $H$ compact with $\psi(o)\notin H$ and a sequence of times $(t_{k})\subseteq [0,t_{\ast})$ converging to $t_{\ast}$ such that $\eta(t_{k})\in H$ for every $k$. Therefore $\gamma(t_{k})$ belongs to the compact set $\varphi(H)\subseteq E$. It follows 
$\gamma(t_{k})\to \gamma(t_{\ast})\in \varphi(H)$, in contradiction to the definition of $t_{\ast}$ in both cases.

Consider the compact subset $H=[\r(x)-\delta,\r(x)+\delta]\times \Sigma$, where $\delta < \delta_{0}=\min\{|\r(x)-\r(o)|, \r(x)-b\}$. Then $H\subseteq \mathcal{C}_{\Sigma}(b,\infty)$, $\eta(0)\in H$ and $\psi(o)\notin H$. Thanks to the above argument there exists $t_{+}\in (0,t_{\ast})$ such that $\eta(t)\in H$ for every $t \in [0,t_{+}]$ and $\eta(t_{+})\in \partial H$. Then, writing $\eta(t)=(\r(t),x(t))$
\begin{equation*}
L_{g_{c}}(\eta\vert_{[0,t_{+}]})=\int_{0}^{t_{+}}\sqrt{\dot{\r}(t)^{2}+\r(t)^{2}\dot{x}(t)^{2}}dt \geq \int_{0}^{t_{+}}|\dot{\r}(t)|dt
\geq \delta
\end{equation*}
since $\r(t)$ connects $\r(x)$ to $\r(\eta(t_{+}))\in \{\r(x)-\delta,\r(x)+\delta\}$. Now 
\begin{equation*}
L_{g}(\gamma)\geq L_{g}(\gamma\vert_{[0,t_{+}]}) \geq \frac{1}{2} L_{g_{c}}(\eta\vert_{[0,t_{+}]})\geq  \frac{\delta}{2}
\end{equation*} 
and we conclude that $d(x,o)\geq \frac{\delta}{2}$. This can be improved to $d(x,o)\geq \frac{\delta_{0}}{2}$, as $\delta<\delta_{0}$ was chosen arbitrarily. Summing up, we finally have
\begin{equation*}
\frac{1}{2}\min\{\r(x)-b, |\r(x)-\r(0)|\}\leq d(x,o)
\end{equation*}
for every $x\in E$ with $\r(x)>b$. In conclusion, if $\r(x)\to \infty$ the above inequality implies $d(x,o)\to \infty$, and we are done.

\item If $\Omega$ is bounded with respect to $d_{E}$ then it is bounded in $\reg$ and in $\X$. Conversely, suppose towards a contradiction that $\Omega$ is bounded in $\X$ but not in $E$. Then there exists a sequence $(x_{i})\subseteq \Omega$ with $d_{E}(o,x_{i})\to \infty$. In particular $\r(x_{i})\to \infty$. Thanks to the first point above, we have $d(o,x_{i})\to \infty$, which contradicts the boundedness of $\Omega$ in $\X$.

\item The set $E\cap H$ is bounded in $\X$ and therefore in $E$. Since $\psi(E\cap H)$ is bounded in $\mathcal{C}_{\Sigma}(a,\infty)$ the complement $\mathcal{C}_{\Sigma}(a,\infty)\setminus \psi(E\cap H)$ has only one unbounded connected component $F$. Now $\varphi(F)$ is the only unbounded connected component of $E\setminus H$.

\item Denote $\Omega=\mathcal{C}_{\Sigma}(a,b]$ for short. Then $\Omega$ is a bounded subset of the cone and therefore $\varphi(\Omega)\subseteq E$ is bounded in $E$ and in $\X$, thanks to Point 2. By definition, $F$ is connected, unbounded and open, being $\Omega$ closed in $\mathcal{C}_{\Sigma}(a,\infty)$. In particular $\overline{\varphi(\Omega)}\cap E=\varphi(\Omega)$. Let $H=K\cup \overline{\varphi(\Omega)}$. Then $H$ is compact and $E\setminus H=E\setminus (E\cap \overline{\varphi(\Omega)})=E\setminus \varphi(\Omega)=F$. This proves that $F$ is a connected component of 
$\X \setminus H$ and therefore an end. Finally, the restriction $\varphi\vert_{\mathcal{C}_{\Sigma}(b,\infty)}\colon \mathcal{C}_{\Sigma}(b,\infty)\to F$ satisfies conditions \eqref{conicalconditionmetric}, \eqref{conicalcondition} and we conclude.

\item As seen in Point 1, we have 
\begin{equation}\label{stimadistconica}
d(x,o) \leq  c \big(\r(o)+\r(x)-2a +a\diam(\Sigma)\big)
\end{equation}
for every $x\in E$. For $b\geq a$, introduce the function
\begin{equation*}
\sigma(b)=\sup_{x\in \mathcal{C}_{\Sigma}(b,\infty)}|\varphi^{\ast}g-g_{c}|(x).
\end{equation*}
Then $\sigma$ is non-increasing and condition \eqref{conicalcondition} gives $b_{\ast}>a+1$ such that $\sigma(b_{\ast})<1/2$. Given a point $x\in E$ with $\r(x)>b_{\ast}^{2}$ denote $s(x)=\sqrt{\r(x)}$. Then $s(x)>b_{\ast}$ and $\r(x)>s(x)$. In particular $\sigma(s(x))\leq \sigma(b_{\ast})<1/2$ and $\psi(x)\in \mathcal{C}_{\Sigma}(s(x),\infty)$. Now take $o_{x}\in E$ with $\r(o_{x})=2s(x)$. If we define $F_{x}=\varphi(\mathcal{C}_{\Sigma}(s(x),\infty))$ then $F_{x}\subseteq E$ is a $C^{0}$-asymptotically conical end of $E$, thanks to Point 4 above, and $x,o_{x}\in F_{x}$. In order to estimate $d(o,x)$ write $d(o_{x},x)-d(o,o_{x}) \leq d(x,o)\leq d(x,o_{x})+d(o,o_{x})$. Applying \eqref{stimadistconica} to $o, o_{x}\in E$ we have
\begin{equation*}
d(o,o_{x})\leq c \big(\r(o)+2s(x)-2a +a\diam(\Sigma)\big).
\end{equation*}
Now we work on the end $F_{x}$. From \eqref{conicalcondition}, we obtain
\begin{equation*}
|\varphi^{\ast}g-g_{c}|(y) \leq \sup_{z\in \mathcal{C}_{\Sigma}(s(x),\infty)}|\varphi^{\ast}g-g_{c}|(z)=\sigma(s(x))<1/2
\end{equation*}
for every $y \in \mathcal{C}_{\Sigma}(s(x),\infty)$ and therefore
\begin{equation*}
(1-\sigma(s(x)))g_{c}\leq \varphi^{\ast}g\leq (1+\sigma(s(x)))g_{c}.
\end{equation*} 
Arguing as in Point 1, but this time for $o_{x}, x \in F_{x}$, we get
\begin{align*}
d(o_{x},x) &\leq \sqrt{1+\sigma(s(x))} \big(\r(o_{x})+\r(x)-2s(x) +s(x)\diam(\Sigma) \big) \\
&=\sqrt{1+\sigma(s(x))}\big(\r(x)+s(x)\diam(\Sigma) \big)
\end{align*}
and similarly 
\begin{equation*}
d(x,o_{x})\geq \sqrt{1-\sigma(s(x))}\min\big\{\r(x)-s(x), |\r(x)-2s(x)|\big\}.
\end{equation*}
Since the above estimates hold for every $x\in E$ with $\r(x)>b_{\ast}^{2}$ it easily follows 
\begin{equation*}
\lim_{\r(x)\to \infty}\frac{d(o,x)}{\r(x)}=1.
\end{equation*}
The last assertion follows from the triangle inequality.\qedhere
\end{enumerate}
\end{proof}

Now we want to see how curvature behaves along a conical end. We start with a simple observation. Let $E\subseteq \X$ be a $C^{k}$-asymptotically conical end and let $\varphi\colon \mathcal{C}_{\Sigma}(a,\infty)\to E$ be as in Definition \ref{asymptconical}. Then for every $\varepsilon>0$ there exists $b>a$ such that 
\begin{equation}\label{stimadiffmetrichecono}
\vert \nabla^{\ell}(\varphi^{\ast}g-g_{c})\vert(x) \leq \varepsilon \r(x)^{-\ell} 
\end{equation}
for every $x$ with $\r(x)>b$ and for $\ell=0,\ldots,k$. Condition \eqref{stimadiffmetrichecono} allows us to estimate the curvature using the following result, which is useful to state in a quite general form.

\begin{prop}\label{closenessmetric} 
Let $\Omega\subseteq \X$ be an open subset with $\Omega\subseteq \reg$ and suppose there exists a diffeomorphism $\varphi\colon \mathcal{C}_{\Sigma}(a,b)\to \Omega$ such that 
\begin{equation*}
|\nabla^{\ell}(\varphi^{\ast}g-g_{c})|(x)\leq \varepsilon_{\ell}(x)
\end{equation*}
for every $0\leq \ell \leq 2$, where $0<a<b\leq \infty$ and $\varepsilon_{\ell}\colon \mathcal{C}_{\Sigma}(a,b)\to (0,1/2)$ are functions. Then 
\begin{equation}\label{curvaturecondition} 
\vert \Rm(\varphi^{\ast}g)\vert_{\varphi^{\ast}g}(x) \leq C(\r(x)^{-2}+\varepsilon_{1}^{2}(x)+\varepsilon_{2}(x)),
\end{equation}
for every $x\in \Omega$, where $C>0$ is a constant depending only on $n$ and $\Sigma$. 
\end{prop}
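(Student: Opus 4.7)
The plan is to view the pullback metric $h=\varphi^{\ast}g$ as a perturbation $h=g_{c}+t$ of the cone metric, where $t=\varphi^{\ast}g-g_{c}$ satisfies $|t|_{g_{c}}\leq \varepsilon_{0}<1/2$, $|\nabla t|_{g_{c}}\leq \varepsilon_{1}$ and $|\nabla^{2}t|_{g_{c}}\leq \varepsilon_{2}$ (where $\nabla$ can be read as either $\nabla^{g_{c}}$ or $\nabla^{h}$ since the hypothesis $|t|<1/2$ makes the two connections and their norms uniformly equivalent, with constants depending only on $n$). Then the triangle inequality gives
\begin{equation*}
|\Rm(h)|_{h}\leq C(n)\bigl(|\Rm(g_{c})|_{g_{c}}+|\Rm(h)-\Rm(g_{c})|_{g_{c}}\bigr),
\end{equation*}
so it suffices to estimate each piece separately.

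First I would record the curvature of the cone. The metric $g_{c}=d\r^{2}+\r^{2}g_{\Sigma}$ is a warped product, so its $(1,3)$-curvature tensor can be computed explicitly and one finds the pointwise bound $|\Rm(g_{c})|_{g_{c}}(x)\leq C(\Sigma)\r(x)^{-2}$; indeed the radial directions are flat while the tangential sectional curvatures are of the form $(K_{\Sigma}-1)/\r^{2}$. This accounts for the $\r(x)^{-2}$ term in the claimed estimate and shows that the constant can be taken to depend only on $n$ and $\Sigma$.

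Next I would handle the difference of curvatures. Write $A:=\Gamma^{h}-\Gamma^{g_{c}}$, which is a tensor (difference of two connections). The standard formula
\begin{equation*}
A^{k}_{ij}=\tfrac{1}{2}h^{kl}\bigl(\nabla^{g_{c}}_{i}t_{jl}+\nabla^{g_{c}}_{j}t_{il}-\nabla^{g_{c}}_{l}t_{ij}\bigr),
\end{equation*}
together with the bound $|h^{-1}|_{g_{c}}\leq C(n)$, yields $|A|_{g_{c}}\leq C(n)\varepsilon_{1}$. Differentiating once more and using the product rule on the factor $h^{-1}$ produces the identity
\begin{equation*}
\Rm(h)-\Rm(g_{c})=\nabla^{g_{c}}A+A\ast A,
\end{equation*}
schematically, where $\ast$ denotes a contraction. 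Taking norms and using $|h^{-1}|_{g_{c}}\leq C(n)$ once more, together with $|\nabla^{g_{c}}h^{-1}|_{g_{c}}=|h^{-1}(\nabla^{g_{c}}t)h^{-1}|_{g_{c}}\leq C(n)\varepsilon_{1}$, gives
\begin{equation*}
|\Rm(h)-\Rm(g_{c})|_{g_{c}}\leq C(n)\bigl(\varepsilon_{2}+\varepsilon_{1}^{2}\bigr).
\end{equation*}
Combining with the cone estimate and the equivalence $|\cdot|_{h}\leq C(n)|\cdot|_{g_{c}}$ yields the desired bound.

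The only mildly delicate point is ensuring that none of the constants hide a dependence on $\r(x)$ or on the size of the annulus $(a,b)$. The cone curvature naturally produces the $\r^{-2}$ factor, while the perturbation estimates are purely tensorial and depend only on the pointwise bounds on $t$ and its first two covariant derivatives, together with the uniform bound $|h^{-1}|_{g_{c}}\leq C(n)$ coming from $|t|_{g_{c}}<1/2$. Thus the final constant $C$ genuinely depends only on $n$ and $\Sigma$ (through the curvature of $g_{\Sigma}$), as required.
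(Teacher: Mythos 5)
Your proof is correct and follows essentially the same approach as the paper: bound the cone curvature by $C(\Sigma)\r^{-2}$ via the warped-product structure, bound the curvature difference $|\Rm(h)-\Rm(g_c)|$ by $C(n)(\varepsilon_1^2+\varepsilon_2)$ via the Christoffel-symbol perturbation, and combine using equivalence of norms. The paper outsources the perturbation estimate to Proposition \ref{curvaturametrichevicine} and phrases the cone bound via $\Riem(g_c)=\r^2 T$ with $T=\Riem_\Sigma-\tfrac12 g_\Sigma\kn g_\Sigma$, but the substance is identical.
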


\begin{proof}
Denote by $h=\varphi^{\ast}g$ the pull-back metric on $\mathcal{C}_{\Sigma}(a,b)$. From Propositions \ref{nearmetrics} and \ref{curvaturametrichevicine}, we find for some $C>0$ depending only on $n$
\begin{equation*}
|\Rm(h)|_{h}\leq C |\Rm(h)|_{g_{c}}\leq C \big(|\Rm(g_{c})|_{g_{c}}+\varepsilon_{1}^{2}+\varepsilon_{2}\big)
\end{equation*}
and therefore it is sufficient to bound $\Rm(g_{c})$. To this end, denote by $\Riem$ the $(0,4)$-version of $\Rm$ and recall that 
$|\Rm|=|\Riem|$. We have $\Riem(g_{c})=\r^{2}T$, where $T$ is the $(0,4)$-tensor given by
\begin{equation*}
T=\Riem_{\Sigma}-\frac{1}{2}g_{\Sigma}\kn g_{\Sigma}.
\end{equation*}
In particular $|\Riem(g_{c})|_{g_{c}}=\r^{2}|T|_{g_{c}}$. Computing in a product chart $((a,b)\times U, \text{id}\times \varphi)$ of $\mathcal{C}_{\Sigma}(a,b)$, we have
\begin{equation*}
|T|_{g_{c}}^{2}=g_{c}^{ai}g_{c}^{bj}g_{c}^{ck}g_{c}^{d\ell}T_{abcd}T_{ijk\ell}=
\r^{-8}g_{\Sigma}^{ai}g_{\Sigma}^{bj}g_{\Sigma}^{ck}g_{\Sigma}^{d\ell}T_{abcd}T_{ijk\ell}=\r^{-8}|T|^{2}_{g_{\Sigma}}
\end{equation*}
and therefore $|\Riem(g_{c})|_{g_{c}}=\r^{-2}|T|_{g_{\Sigma}}$. Being $\Sigma$ compact, there exists a constant $C>0$, depending only on $\Sigma$, such that $|\Riem(g_{c})|_{g_{c}}\leq \r^{-2}C$. The claim follows.
\end{proof}

Inequality \eqref{curvaturecondition} can be read directly on $\Omega$, giving
\begin{equation*}
|\Rm(x)|_g\leq C(\r^{-2}(x)+\varepsilon_{1}^{2}(x)+\varepsilon_{2}(x))
\end{equation*}
for every $x\in \Omega$. For asymptotically conical ends, we then get the following immediate corollary.

\begin{cor}\label{cor.curvatureonend}
Let $E\subseteq \X$ be a $C^{2}$-asymptotically conical end and let $\varphi\colon \mathcal{C}_{\Sigma}(a,\infty)\to E$ be as in Definition \ref{asymptconical}. Then there exists $b>a$ such that 
\begin{equation}\label{curvaturafineconica}
|\Rm(x)|_{g}\leq C \r(x)^{-2}
\end{equation}
for every $x\in F=\varphi(\mathcal{C}_{\Sigma}(b,\infty))$, where $C>0$ is a constant depending only on $n$ and $\Sigma$. Equivalently, if $p\in \X$ is a basepoint, then there exists $r>0$ such that 
\begin{equation}\label{curvaturedecayconicalend}
|\Rm(x)|_g\leq C/d(x,p)^{2}
\end{equation}
for every $x\in E$ with $d(x,p)\geq r$, for some $C=C(n,\Sigma)>0$.
\end{cor}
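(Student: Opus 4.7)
The plan is to derive the corollary by combining the decay estimate \eqref{stimadiffmetrichecono} on the difference $\varphi^{\ast}g-g_{c}$ (which is just the definition of $C^{2}$-asymptotic conicality unpacked) with the general curvature comparison in Proposition \ref{closenessmetric}, and then to translate the $\r$-decay into a $d(\cdot,p)$-decay using Proposition \ref{propconicalend}.

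First, I would fix any $\varepsilon\in(0,1/2)$, say $\varepsilon=1/4$. Applying \eqref{stimadiffmetrichecono} for $\ell=0,1,2$ (allowed since $E$ is $C^{2}$-asymptotically conical), I obtain $b>a$ such that
\[
|\nabla^{\ell}(\varphi^{\ast}g-g_{c})|(x)\leq \varepsilon\,\r(x)^{-\ell}, \qquad \ell=0,1,2,
\]
for every $x\in \mathcal{C}_{\Sigma}(b,\infty)$. I then apply Proposition \ref{closenessmetric} on the annulus $\mathcal{C}_{\Sigma}(b,\infty)$ with $\varepsilon_{\ell}(x):=\varepsilon\,\r(x)^{-\ell}$, each of which lies in $(0,1/2)$ on this domain. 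This gives
\[
|\Rm(\varphi^{\ast}g)|_{\varphi^{\ast}g}(x)\leq C\bigl(\r(x)^{-2}+\varepsilon^{2}\r(x)^{-2}+\varepsilon\,\r(x)^{-2}\bigr)\leq C'\,\r(x)^{-2},
\]
with $C'=C'(n,\Sigma)$. Since the pointwise norm of the curvature tensor is invariant under the pullback by $\varphi$, and since $\r$ was defined on $E$ precisely via $\r(\varphi(x))=\r(x)$, this transfers to the bound \eqref{curvaturafineconica} on $F=\varphi(\mathcal{C}_{\Sigma}(b,\infty))$.

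For the equivalent formulation in terms of $d(\cdot,p)$, I would invoke Proposition \ref{propconicalend}(5), namely \eqref{radialdistancebasepoint}, which says $d(p,x)/\r(x)\to 1$ as $\r(x)\to\infty$. In particular there exists $R>b$ and a constant $c>0$ such that $d(x,p)\leq c\,\r(x)$ whenever $x\in E$ and $\r(x)\geq R$, so $\r(x)^{-2}\leq c^{2}d(x,p)^{-2}$ there. Using Point 1 of the same proposition ($d(x,p)\to\infty$ iff $\r(x)\to\infty$ on $E$), I can choose $r>0$ large enough that $\{x\in E:d(x,p)\geq r\}\subseteq \{x\in E:\r(x)\geq R\}$, and then the already-established \eqref{curvaturafineconica} yields \eqref{curvaturedecayconicalend}. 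There is no real obstacle here: the argument is entirely a bookkeeping combination of the two earlier propositions, the only minor point to keep track of being that the choice of $\varepsilon<1/2$ ensures Proposition \ref{closenessmetric} applies also for $\ell=0$.
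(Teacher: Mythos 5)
Your proposal is correct and takes essentially the same route as the paper: apply \eqref{stimadiffmetrichecono} with a fixed small $\varepsilon$, feed $\varepsilon_{\ell}(x)=\varepsilon\,\r(x)^{-\ell}$ into Proposition~\ref{closenessmetric}, and convert from $\r$ to $d(\cdot,p)$ via \eqref{radialdistancebasepoint}. One small point you assert without justification is that $\varepsilon\,\r(x)^{-\ell}\in(0,1/2)$ for $\ell=1,2$ on $\mathcal{C}_{\Sigma}(b,\infty)$ — this requires $b$ not too small (the paper arranges $b>a+1>1$ for this reason), but since one is always free to enlarge $b$, this is a cosmetic omission rather than a gap.
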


\begin{proof}
Taking $\varepsilon=1/3$ in \eqref{stimadiffmetrichecono} we have
\begin{equation}
\vert \nabla^{\ell}(\varphi^{\ast}g-g_{c})\vert(x) \leq \frac{1}{3}\r(x)^{-\ell} 
\end{equation}
on $\mathcal{C}_{\Sigma}(b,\infty)$ for some $b>a$, where we can assume $b>a+1$. At this point defining $\varepsilon_{\ell}(x)=\frac{1}{3}\r(x)^{-\ell}$ we have $\varepsilon_{\ell}(x)\leq \frac{1}{3}b^{-\ell}<\frac{1}{2}$ and Proposition \ref{closenessmetric} gives the first claim. Now, from Proposition \ref{propconicalend} we have
\begin{equation*}
\lim_{\r(x)\to \infty}\frac{d(p,x)}{\r(x)}=1.
\end{equation*} 
Therefore there exists $b_{2}>b_{1}$ such that $\frac{1}{2} \r(x)\leq d(x,p)\leq 2\r(x)$ for every $\r(x)>b_{2}$ and the second formula follows immediately.
\end{proof}

Since we are interested in computing the number of conical ends of a Ricci shrinker, it is convenient to introduce the following asymptotically conical version of Definition \ref{def.end1}.

\begin{defn}\label{abstractconicalend} 
An end $\e\in \mathcal{E}(\X)$ as in Definition \ref{def.end1} is \emph{$C^{k}$-asymptotically conical} if there exists a compact subset $K\subseteq \X$ such that the end $\e(K)$ relative to $K$ is $C^{k}$-asymptotically conical in the sense of Definition \ref{asymptconical}. The set of such asymptotically conical ends of $\X$ is denoted by $\mathcal{E}_c(\X)$.
\end{defn}

The relation between this more abstract concept and Definition \ref{asymptconical} is given by the following result.

\begin{prop}\label{fineconica} 
Let $E$ be an end relative to some compact subset $K\subseteq \X$. If $E$ is $C^{k}$-asymptotically conical then there exists a unique end $\e \in \mathcal{E}(\X)$ such that $\e(K)=E$ and $\e$ is $C^{k}$-asymptotically conical.
\end{prop}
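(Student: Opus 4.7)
The proof should be essentially a direct concatenation of two results already established in the excerpt.

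First, existence. To produce the end $\e \in \mathcal{E}(\X)$ with $\e(K) = E$, I would simply invoke Point 5 of Proposition \ref{prop.ends-general}, which asserts that every end relative to a compact set extends to an element of $\mathcal{E}(\X)$. This covers existence with no further work.

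Second, uniqueness. The same Point 5 of Proposition \ref{prop.ends-general} gives uniqueness under the hypothesis that for every compact $H \supseteq K$, there is a unique end of $\X$ relative to $H$ contained in $E$. But this is exactly the content of Point 3 of Proposition \ref{propconicalend}, which tells us that if $H \supseteq K$ is compact then $E \setminus H$ has only one unbounded connected component; i.e.\ only one end of $\X$ relative to $H$ sits inside $E$. Hence the hypothesis of Point 5 is satisfied, yielding a unique $\e$.

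Finally, the asymptotically conical property of $\e$ as in Definition \ref{abstractconicalend} is immediate: we have exhibited the compact subset $K$ (the one from the hypothesis) such that $\e(K) = E$ is $C^{k}$-asymptotically conical in the sense of Definition \ref{asymptconical}, by assumption. There is no obstacle to speak of; the only minor point to keep straight is that Proposition \ref{propconicalend}(3) applies to \emph{any} compact $H \supseteq K$, so that uniqueness of the extension $\e$ really does follow from Proposition \ref{prop.ends-general}(5) without extra argument.
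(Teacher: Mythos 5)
Your proof is correct and takes essentially the same route as the paper. The paper constructs $\e$ via an exhaustion $(K_i)$ and invokes (an analogue of) Proposition \ref{propconicalend}(3) to get uniqueness of the end relative to each $K_i$, which amounts to verifying exactly the hypothesis of Proposition \ref{prop.ends-general}(5) that you apply directly; your observation that the $C^k$-asymptotically conical property is immediate from Definition \ref{abstractconicalend} also matches the paper's (implicit) treatment.
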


\begin{proof}
Take an exhaustion of $\X$ by compact subsets, $(K_{i})_{i\in \N}$, with $K_{0}=K$. Thanks to the second point in Lemma \ref{lemmacono}, for every $i>0$ there exists a unique end relative to $K_{i}$ that is contained in $E$; denote it by $E_{i}$. Therefore the end $\e \in \mathcal{E}(\X)$ defined by $\e(K_{i})=E_{i}$, where $E_{0}=E$, is the unique one satisfying $\e(K)=E$, and we are done.
\end{proof}

We now turn our attention to nearly conical regions as in Definition \ref{nearannulus}. Note that such regions naturally come from asymptotically conical ends. Indeed if $E\subseteq \X$ is a $C^{k}$-asymptotically conical end, we obtain from \eqref{stimadiffmetrichecono} with $\varepsilon \in (0,1/2)$
\begin{equation}
\sup_{x \in \mathcal{C}_{\Sigma}(b,\infty)}\r(x)^{\ell}\vert \nabla^{\ell}(\varphi^{\ast}g-g_{c})\vert(x) \leq \varepsilon 
\end{equation}
for appropriate $b>a$, where $\varphi\colon \mathcal{C}_{\Sigma}(a,\infty)\to E$ is as in Definition \ref{asymptconical}. Therefore for every interval $I\subseteq (b,\infty)$, the subset $\Omega=\varphi(\mathcal{C}_{\Sigma}(I))$ is $C^{k}$-near to the conical annulus $\mathcal{C}_{\Sigma}(I)$.

Also in the case of such nearly conical regions we obtain an estimate for the curvature as follows. Suppose $\Omega$ is $C^{2}$-near to the conical annulus $\mathcal{C}_{\Sigma}(a,b)$.  Applying Proposition \ref{closenessmetric} with $\varepsilon_{\ell}=\varepsilon a^{-\ell}$ we obtain
\begin{equation}\label{curvatureconicalset}
|\Rm|(x)\leq Ca^{-2}
\end{equation}
for every $x\in \Omega$, where $C>0$ depends only on $n$ and $\Sigma$.

We conclude this section with a volume non-collapsing result for nearly conical regions. We will use this to apply Perelman's pseudo-locality theorem (see Proposition \ref{pseudolocality} below). As usual we start with the cone case.

\begin{prop}\label{volumeprep} 
Let $n\in \N$. There exists $c_{0}(n)>0$ with the following property. Let $\Sigma$ be a compact, connected
$(n-1)$-dimensional Riemannian manifold and consider its cone $\mathcal{C}_{\Sigma}$. For every $r>0$ there exists $a_{0}=a_{0}(n,r,\Sigma)>0$ such that, 
if $a\geq a_{0}$ and $b>a+r$, every metric ball $B_{g_{c}}(x_{0},\varrho)$ of $\mathcal{C}_{\Sigma}(a,b)$ with respect to the intrinsic distance, 
satisfies $\Vol_{g_{c}}(B_{g_{c}}(x_{0},\varrho))\geq c_{0}(n)\varrho^{n}$, 
where $\varrho \in (0,r]$. 
\end{prop}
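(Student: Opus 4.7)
The plan is to exhibit, for each $x_0 \in \mathcal{C}_\Sigma(a,b)$ and each $\varrho \in (0, r]$, an explicit open subset $\Omega \subseteq B_{g_c}(x_0, \varrho)$ of volume at least $c_0(n)\varrho^n$, exploiting only that $\Sigma$ is smooth and compact and that at large radii the cone geometry is essentially a product in the radial direction.

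Write $x_0 = (s_0, y_0)$ with $s_0 = \r(x_0) \in (a,b)$. Since $b > a+r$ and $\varrho \leq r$, we have $b-a > 2\varrho/3$, so at least one of the intervals $I_+ := (s_0, s_0 + \varrho/3)$ and $I_- := (s_0 - \varrho/3, s_0)$ is entirely contained in $(a,b)$; pick such an $I$ and set
\begin{equation*}
\Omega := \{(s,y) : s \in I,\ d_\Sigma(y_0, y) < \varrho/(3 s_0)\} \subseteq \mathcal{C}_\Sigma(a,b).
\end{equation*}
First I would verify $\Omega \subseteq B_{g_c}(x_0, \varrho)$. Given $(s,y) \in \Omega$, the concatenation of a radial segment from $(s_0,y_0)$ to $(s,y_0)$ with a horizontal lift at fixed radius $s$ of a minimising $\Sigma$-geodesic from $y_0$ to $y$ is a piecewise smooth curve lying in $\Omega$ with $g_c$-length at most
\begin{equation*}
|s-s_0| + s\cdot d_\Sigma(y_0, y) < \tfrac{\varrho}{3} + \bigl(s_0 + \tfrac{\varrho}{3}\bigr)\cdot \tfrac{\varrho}{3 s_0} = \tfrac{2\varrho}{3} + \tfrac{\varrho^2}{9 s_0},
\end{equation*}
which is strictly less than $\varrho$ as soon as $s_0 \geq \varrho$; this is ensured by requiring $a_0 \geq r$.

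Next I would lower-bound $\Vol_{g_c}(\Omega)$ through the cone volume form $s^{n-1}\,ds\wedge dV_\Sigma$:
\begin{equation*}
\Vol_{g_c}(\Omega) = \Bigl(\int_I s^{n-1} ds\Bigr)\Vol_\Sigma(B_\Sigma(y_0, \varrho/(3 s_0))) \geq \tfrac{\varrho}{3}\bigl(\tfrac{s_0}{2}\bigr)^{n-1}\Vol_\Sigma(B_\Sigma(y_0, \varrho/(3 s_0))),
\end{equation*}
where in the downward case I use $s \geq s_0 - \varrho/3 \geq s_0/2$. Since $\Sigma$ is smooth and compact, there exists $\eta_0 = \eta_0(\Sigma) > 0$ with $\Vol_\Sigma(B_\Sigma(y, \tilde{\varrho})) \geq \tfrac{1}{2}\omega_{n-1}\tilde{\varrho}^{n-1}$ for all $y \in \Sigma$ and $\tilde{\varrho} \leq \eta_0$; enlarging $a_0$ to $a_0 = \max(r, r/(3\eta_0))$ makes $\varrho/(3 s_0) \leq \eta_0$, and substitution yields the desired bound with $c_0(n) = \omega_{n-1}/(2^n\cdot 3^n)$, depending only on $n$.

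The main technical subtlety is that $B_{g_c}(x_0, \varrho)$ is measured with the \emph{intrinsic} length distance of $\mathcal{C}_\Sigma(a,b)$, which can differ substantially from the ambient cone distance when $x_0$ is close to $\r = a$ or $\r = b$; a symmetric construction around $x_0$ would lose too much volume there. The one-sided choice of $I$ avoids this, and the hypothesis $b > a+r$ is precisely what guarantees at least one of $I_\pm$ fits inside $(a,b)$.
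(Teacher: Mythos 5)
Your proof is correct and follows essentially the same strategy as the paper's: exhibit a one-sided product set $I \times B_\Sigma(y_0,\delta)$ inside the intrinsic ball (with $|I|$ and $\delta$ both proportional to $\varrho$ and $\delta$ inversely proportional to $\r(x_0)$), integrate the cone volume form $\r^{n-1}\,d\r\otimes d\nu_\Sigma$, and control the $\Sigma$-ball volume uniformly via the compactness of $\Sigma$ — the paper uses an interval of length $\varrho/2$, routes the comparison curve horizontally at radius $\r_0$ before moving radially, and invokes Croke's isoperimetric inequality instead of the small-radius Taylor expansion, but these are cosmetic variations. One minor slip: the comparison curve you build does not lie in $\Omega$ (it starts at $x_0\notin\Omega$), but it does lie in $\mathcal{C}_\Sigma(a,b)$, which is all the intrinsic-distance estimate requires.
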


\begin{proof}
Since the metric on the cone $\mathcal{C}_{\Sigma}$ is $g_{c}=d\r \otimes d\r + \r^{2}g_{\Sigma}$, the Riemannian volume form is given by the product measure $\nu_{g_{c}}=(\r^{n-1}\lambda)\otimes \nu_{\Sigma}$, where $\r^{n-1}\lambda$ denotes the measure on $(0,\infty)$ with density function $\r^{n-1}$. Therefore if $A=I\times B$, with $I\subseteq (0,\infty)$ an interval and $B\subseteq \Sigma$ a measurable set, we have $\Vol_{g_{c}}(A)=\left(\int_{I}\r^{n-1}d\r\right)\Vol_{\Sigma}(B)$. Let $r>0$ and $a\geq a_{0}>0$, $b>a+r$, where $a_{0}>0$ will be determined later. For $\varrho \in (0,r]$ consider a metric ball $B_{g_{c}}(x_{0},\varrho)=\{x\in \mathcal{C}_{\Sigma}(a,b)\colon d_{c}(x,x_{0})<\varrho\}$, where $d_{c}$ is the intrinsic distance of $\mathcal{C}_{\Sigma}(a,b)$ and $x_{0}=(\r_{0},y_{0})\in \mathcal{C}_{\Sigma}(a,b)$. The idea is to estimate the volume of this ball using a product set contained in it. We require $a_{0}\geq \max\{r, 2r/\text{inj}(\Sigma)\}$, where
$\text{inj}(\Sigma)>0$ is the injectivity radius of $\Sigma$. In this way, the interval $I$ given by
\begin{equation*}
I=\begin{cases}
(\r_{0}-\varrho/2,\r_{0}), & \text{if } \r_{0}-\varrho/2 > a,  \\
(\r_{0},\r_{0}+\varrho/2), & \text{if } \r_{0}+\varrho/2< b,
\end{cases}
\end{equation*}
is always contained in $(a,b)$ and has length $\varrho/2$. (In the above definition of $I$, if both conditions are satisfied, we can pick $I$ either way.) Let us now prove that $I \times B_{\Sigma}(y_{0},\delta)\subseteq B_{c}(x_{0},\varrho)$, where $\delta=\varrho/2\r_{0}$. Take $x=(\r,y) \in I\times B_{\Sigma}(y_{0},\delta)$. Let $\sigma\colon [0,1]\to \Sigma$ be a minimising geodesic between $y_{0}$ and $y$ and define the curve $\gamma\colon [0,2]\to \mathcal{C}_{\Sigma}(a,b)$ as follows:
\begin{equation*}
\gamma(t)=\begin{cases}
(\r_{0},\sigma(t)), & \text{if } t\in [0,1],\\
(\r_{0}+(t-1)(\r-\r_{0}),y) & \text{if } t\in [1,2].
\end{cases}
\end{equation*}
Then $\gamma$ is a piecewise smooth curve in $\mathcal{C}_{\Sigma}(a,b)$ and we can compute its length:
\begin{equation*}
L_{g_{c}}(\gamma)=\int_{0}^{2}|\dot{\gamma}(t)|dt = \r_{0}\int_{0}^{1}|\dot{\sigma}(t)|dt + \int_{1}^{2}|\r-\r_{0}|dt < \r_{0}\delta+|\r-\r_{0}|<\varrho.
\end{equation*}
Therefore $d_{c}(x_{0},x)\leq L_{g_{c}}(\gamma)<\varrho$ which proves the desired inclusion $I \times B_{\Sigma}(y_{0},\delta)\subseteq B_{g_c}(x_{0},\varrho)$. In particular
\begin{equation*}
\Vol_{g_{c}}(B_{c}(x_{0},\varrho))\geq \Vol_{g_{c}}(I\times B_{\Sigma}(y_{0},\delta))=
\bigg(\int_{I} \r^{n-1}d\r \bigg)\Vol_{\Sigma}(B_{\Sigma}(y_{0},\delta)).
\end{equation*}
Since every $\r \in I$ satisfies $\r>\inf I\geq \r_{0}-\varrho/2$ it follows 
\begin{equation*}
\int_{I}\r^{n-1}d\r \geq  \int_{I}\Big(\r_{0}-\frac{\varrho}{2}\Big)^{n-1}d\r=\frac{\varrho}{2}\Big(\r_{0}-\frac{\varrho}{2}\Big)^{n-1}.
\end{equation*}
On the other hand we have $\delta<\text{inj}(\Sigma)/2$ by the choice of $a_{0}$ and thanks to Proposition 14 in \cite{croke} we can estimate
\begin{equation*}
\Vol_{\Sigma}(B_{\Sigma}(y_{0},\delta))\geq c_{1}(n-1)\delta^{n-1},
\end{equation*}
where the constant $c_{1}(n-1)$ can be explicitly determined. Putting everything together, we get
\begin{equation*}
\Vol_{g_{c}}(B_{g_c}(x_{0},\varrho))\geq c_{1}(n-1)\frac{\varrho}{2}\Big(\r_{0}-\frac{\varrho}{2}\Big)^{n-1}\delta^{n-1}
\geq c_{1}(n-1)2^{1-2n}\varrho^{n}=c_{0}(n)\varrho^{n}
\end{equation*}
which gives the claim.
\end{proof}

\begin{cor}\label{volumeball} 
Let $n\in \N$. There exists $c(n)>0$ with the following property. Consider the cone $\mathcal{C}_{\Sigma}$ over a compact, connected 
$(n-1)$-dimensional Riemannian manifold $\Sigma$. For every $r>0$ there exists $a_{0}=a_{0}(n,r,\Sigma)>0$ such that if $\X$ is an $n$-dimensional singular space and $\Omega\subseteq \reg \subseteq \X$ is an open subset $C^{0}$-near to the conical annulus 
$\mathcal{C}_{\Sigma}(a,b)$, where $a\geq a_{0}$ and $b>a+r$, then $\Vol(B(x_{0},\varrho))\geq c(n)\varrho^{n}$ for every ball 
$B(x_{0},\varrho)\subseteq \Omega$ with $\varrho \in (0,r]$. 
\end{cor}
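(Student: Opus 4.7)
The plan is to transfer the cone volume bound of Proposition \ref{volumeprep} through the nearly isometric diffeomorphism $\varphi\colon \mathcal{C}_{\Sigma}(a,b)\to \Omega$ provided by Definition \ref{nearannulus}. Let $\varepsilon\in(0,1/2)$ denote the constant appearing in \eqref{closenesscondition} with $\ell=0$, so that $|\varphi^{\ast}g-g_{c}|_{g_{c}}\le \varepsilon$ on all of $\mathcal{C}_{\Sigma}(a,b)$. By the standard nearby-metric estimates (recorded in Appendix \ref{appendix-nearbymetrics}), this gives the quadratic-form bounds $(1-\varepsilon)g_{c}\le \varphi^{\ast}g\le (1+\varepsilon)g_{c}$ and therefore the volume comparison $d\nu_{\varphi^{\ast}g}\ge (1-\varepsilon)^{n/2}d\nu_{g_{c}}$, together with the length bound $L_{\varphi^{\ast}g}(\gamma)\le \sqrt{1+\varepsilon}\,L_{g_{c}}(\gamma)$ for any curve $\gamma$ in the annulus.

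Given $B(x_{0},\varrho)\subseteq \Omega$, set $y_{0}=\varphi^{-1}(x_{0})$. First I would observe that since $\Omega\subseteq \reg$, the ambient distance $d$ restricted to $\Omega$ is bounded above by the intrinsic Riemannian distance $d_{\Omega}$ on $(\Omega,g)$, hence any $y\in \mathcal{C}_{\Sigma}(a,b)$ with $d_{\varphi^{\ast}g}(y,y_{0})<\varrho$ satisfies $d(\varphi(y),x_{0})\le d_{\Omega}(\varphi(y),x_{0})=d_{\varphi^{\ast}g}(y,y_{0})<\varrho$, giving the inclusion
\begin{equation*}
\varphi\bigl(B_{\varphi^{\ast}g}(y_{0},\varrho)\bigr)\subseteq B(x_{0},\varrho).
\end{equation*}
Combined with the length comparison above, the cone-metric ball $B_{g_{c}}(y_{0},\varrho/\sqrt{1+\varepsilon})$ in $\mathcal{C}_{\Sigma}(a,b)$ sits inside $B_{\varphi^{\ast}g}(y_{0},\varrho)$, and therefore
\begin{equation*}
\Vol_{g}(B(x_{0},\varrho))\ge \Vol_{\varphi^{\ast}g}\bigl(B_{g_{c}}(y_{0},\varrho/\sqrt{1+\varepsilon})\bigr)\ge (1-\varepsilon)^{n/2}\,\Vol_{g_{c}}\bigl(B_{g_{c}}(y_{0},\varrho/\sqrt{1+\varepsilon})\bigr).
\end{equation*}

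Now choose $a_{0}(n,r,\Sigma)$ to be exactly the constant from Proposition \ref{volumeprep} applied with radius $r$ (the bound $\varrho/\sqrt{1+\varepsilon}\le \varrho\le r$ makes the hypothesis directly applicable). That proposition yields
\begin{equation*}
\Vol_{g_{c}}\bigl(B_{g_{c}}(y_{0},\varrho/\sqrt{1+\varepsilon})\bigr)\ge c_{0}(n)\bigl(\varrho/\sqrt{1+\varepsilon}\bigr)^{n},
\end{equation*}
and plugging in $\varepsilon<1/2$ collapses all the $\varepsilon$-factors into a single dimensional constant, producing $c(n)=c_{0}(n)\,(1-\varepsilon)^{n/2}(1+\varepsilon)^{-n/2}\ge c_{0}(n)\,3^{-n/2}$, as required.

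The only non-routine point is the first inclusion, namely controlling the ambient distance $d$ by the intrinsic distance of $\Omega$; this works here only because $\Omega\subseteq \reg$ and we chose $B(x_{0},\varrho)\subseteq \Omega$, so no portion of any minimising geodesic leaves the regular part or the chart. Everything else is a direct application of Proposition \ref{volumeprep} together with the metric-equivalence estimates from Appendix \ref{appendix-nearbymetrics}.
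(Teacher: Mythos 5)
Your proof takes essentially the same route as the paper's: pull back through $\varphi$, compare balls and volume forms between $\varphi^{\ast}g$ and $g_{c}$ using $(1-\varepsilon)g_{c}\leq \varphi^{\ast}g\leq (1+\varepsilon)g_{c}$, and feed the result into Proposition \ref{volumeprep} with the same choice of $a_{0}$. If anything, your argument is a touch more careful than the paper's: the paper asserts $\psi(B_{g}(x_{0},\varrho))=B_{h}(\overline{x}_{0},\varrho)$, whereas the correct statement (and what the volume bound actually requires) is the one-sided inclusion you derive from $d\le d_{\Omega}$ on $\Omega$.
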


\begin{proof}
Let $\varphi\colon \mathcal{C}_{\Sigma}(a,b)\to \Omega$ be the diffeomorphism given by Definition \ref{nearannulus} and let $\psi$  be its inverse. Set $h=\varphi^{\ast}g\vert_{\Omega}$. Since the map $\psi\colon (\Omega,g\vert_{\Omega})\to (\mathcal{C}_{\Sigma}(a,b),h)$ is an isometry and $B_{g}(x_{0},\varrho)\subseteq \Omega$, we have $\psi(B_{g}(x_{0},\varrho))=B_{h}(\overline{x}_{0},\varrho)$ where $\overline{x}_{0}=\psi(x_{0})$ and $B_{h}(\overline{x}_{0},\varrho)=\{x\in \mathcal{C}_{\Sigma}(a,b)\colon d_{h}(x,\overline{x}_{0})<\varrho\}$. Moreover $\Vol_{g}(B_{g}(x_{0},\varrho))=\Vol_{h}(B_{h}(\overline{x}_{0},\varrho))$. Since $(1-\varepsilon)g_{c}\leq h \leq (1+\varepsilon)g_{c}$ on $\mathcal{C}_{\Sigma}(a,b)$ it follows (using the notation of Proposition \ref{volumeprep}) that $B_{h}(\overline{x}_{0},\varrho)\supseteq B_{g_{c}}(\overline{x}_{0},\varrho/\sqrt{1+\varepsilon})$ and $\frac{d\nu_{h}}{d\nu_{g_{c}}}\geq (1-\varepsilon)^{n/2}$. If $a\geq a_{0}$, with $a_{0}$ given by the preceding Proposition \ref{volumeprep}, we have
\begin{align*}
\Vol_{g}(B_{g}(x_{0},\varrho))&=\Vol_{h}(B_{h}(\overline{x}_{0},\varrho))\geq (1-\varepsilon)^{n/2}\Vol_{g_{c}}(B_{h}(\overline{x}_{0},\varrho))\\
&\geq (1-\varepsilon)^{n/2}\Vol_{g_{c}}\big(B_{g_{c}}(\overline{x}_{0},\varrho/\sqrt{1+\varepsilon})\big)
\geq \Big(\frac{1-\varepsilon}{1+\varepsilon} \Big)^{n/2} \! c_{0}(n)\varrho^{n}\\
& \geq 3^{-n/2}c_{0}(n)\varrho^{n}=c(n)\varrho^{n}
\end{align*}
and we are done.
\end{proof}

\begin{rem}\label{volumeball-rem} 
Observe that in Proposition \ref{volumeprep} and Corollary \ref{volumeball} 
one can explicitly take $a_{0}=a_{0}(n,r,\Sigma)= \max\{r,2r/\text{inj}(\Sigma)\}$.
\end{rem}

It is worth pointing out that, in the case of smooth Ricci shrinkers, from the curvature estimate \eqref{curvatureconicalset} it is possible to obtain a result similar to Corollary \ref{volumeball} using the following theorem.

\begin{prop}[Li-Wang \cite{heatkernel}]\label{liwangteo} 
Let $M$ be a smooth Ricci shrinker with entropy $\mu(g)$ and suppose $S\leq A$ on the ball $B(x,\varrho)\subseteq M$. Then
\begin{equation*}
\Vol(B(x,\varrho))\geq c(n)e^{\mu(g)-A \varrho^{2}}\varrho^{n}
\end{equation*}
where $c(n)$ is a constant depending only on $n$.
\end{prop}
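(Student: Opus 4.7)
The plan is to deduce the estimate from Perelman's $\mathcal{W}$-functional. Recall that for a normalised smooth Ricci shrinker $(M,g,f)$ one has $\mu(g)=-c(g)=\mathcal{W}(g,f,1)$, and moreover $\mu(g)=\inf_{\tau>0}\mu(g,\tau)$, where $\mu(g,\tau)$ denotes the infimum of $\mathcal{W}(g,\,\cdot\,,\tau)$ over admissible test functions. For every scale $\tau>0$ and every admissible $\varphi$ (satisfying $\int_{M}(4\pi\tau)^{-n/2}e^{-\varphi}\,d\nu_{g}=1$), one therefore has
\begin{equation*}
\mu(g)\leq \mathcal{W}(g,\varphi,\tau)=\int_{M}\bigl[\tau(|\nabla\varphi|^{2}+S)+\varphi-n\bigr](4\pi\tau)^{-n/2}e^{-\varphi}\,d\nu_{g}.
\end{equation*}
The strategy is to plug a suitable ``Gaussian bump'' trial function supported in $B(x,\varrho)$, at scale $\tau=\varrho^{2}$, into this inequality and to read off a lower bound on $V:=\Vol(B(x,\varrho))$.

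Concretely, I would pick a smooth cut-off $\phi$ with $\phi\equiv 1$ on $B(x,\varrho/2)$, $\operatorname{supp}(\phi)\subseteq B(x,\varrho)$, $0\leq\phi\leq 1$, and $|\nabla\phi|\leq C_{0}/\varrho$. Set $u=c\phi$ with $c>0$ chosen by the normalisation $\int_{M}u^{2}\,d\nu_{g}=(4\pi\tau)^{n/2}$, and take $\varphi=-\log u^{2}$. Expanding,
\begin{equation*}
\mathcal{W}(g,\varphi,\tau)=(4\pi\tau)^{-n/2}\int_{M}\bigl[4\tau|\nabla u|^{2}+\tau S u^{2}-u^{2}\log u^{2}-nu^{2}\bigr]\,d\nu_{g}.
\end{equation*}
The gradient term contributes a purely dimensional constant after substituting $\tau=\varrho^{2}$ and $|\nabla\phi|\leq C_{0}/\varrho$; the scalar-curvature term is bounded by $A\varrho^{2}$ using $S\leq A$ on the support of $\phi$. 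The decisive contribution is the entropy-like piece, which splits as
\begin{equation*}
-\int_{M}u^{2}\log u^{2}\,d\nu_{g}=-\log(c^{2})(4\pi\tau)^{n/2}-2c^{2}\!\int_{M}\phi^{2}\log\phi\,d\nu_{g},
\end{equation*}
where the second summand is $O_{n}(1)\cdot c^{2}V$ since $s^{2}|\log s|$ is uniformly bounded on $[0,1]$, while $-\log(c^{2})\leq \log\bigl(V/(4\pi\varrho^{2})^{n/2}\bigr)$ because $c^{2}=(4\pi\tau)^{n/2}/\|\phi\|_{L^{2}}^{2}$ and $\|\phi\|_{L^{2}}^{2}\leq V$. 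Combining and rearranging yields $\mu(g)\leq C(n)+A\varrho^{2}-\log(\varrho^{n}/V)$, which is exactly the claimed bound.

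The main obstacle is to track the entropy term $-\int u^{2}\log u^{2}$ sharply enough that the final exponent is exactly $\mu(g)-A\varrho^{2}$, with a purely dimensional constant $c(n)$ in front. It is precisely this term that produces the logarithmic dependence on $V$, and the balance between the $L^{2}$-normalisation forcing $c^{2}\sim (4\pi\varrho^{2})^{n/2}/V$ and the pointwise bound $u\leq c$ has to be handled with care. A more subtle structural point is that one uses the inequality $\mu(g)\leq \mu(g,\tau)$ precisely at the scale $\tau=\varrho^{2}$; this is legitimate because the self-similar nature of a Ricci shrinker ensures that Perelman's entropy is realised by the potential $f$ at $\tau=1$ and is therefore available as a lower bound at \emph{every} scale.
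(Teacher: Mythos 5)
The paper cites Li--Wang for this proposition and does not prove it, so I will assess your argument on its own merits. Your overall route (a log-Sobolev/$\mathcal{W}$-functional lower bound applied to a cut-off test function) is the right one in spirit, but as written there are two genuine gaps.

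First, the inequality $\mu(g)\le\mathcal{W}(g,\varphi,\tau)$ for \emph{all} $\tau>0$ and all admissible $\varphi$ on a noncompact Ricci shrinker (with no a priori curvature bound) is precisely the log-Sobolev inequality for shrinkers. This is a substantial theorem --- due to Carrillo--Ni under bounded curvature and extended to general shrinkers by Li--Wang in the very paper being cited --- and it is not a consequence of the bare remark that ``self-similarity makes the entropy available at every scale''. In the noncompact setting one must in particular justify that $\mu(g,\tau)>-\infty$ and that Perelman's monotonicity applies along the self-similar flow. If you intend to invoke this as a black box you should say so explicitly; otherwise the argument is circular when used to re-derive a result of the same paper.

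Second, and more concretely, the estimates do not yield a dimensional constant as claimed. With the normalisation $c^{2}\|\phi\|_{L^{2}}^{2}=(4\pi\tau)^{n/2}$ and $\tau=\varrho^{2}$, the gradient contribution is
\begin{equation*}
(4\pi\tau)^{-n/2}\cdot 4\tau c^{2}\!\int_{M}|\nabla\phi|^{2}\,d\nu_{g}\;\le\;4C_{0}^{2}\,\frac{V}{\|\phi\|_{L^{2}}^{2}},
\end{equation*}
and the entropy correction $-2c^{2}\int\phi^{2}\log\phi$ likewise produces $O_{n}(1)\cdot V/\|\phi\|_{L^{2}}^{2}$ after dividing by $(4\pi\tau)^{n/2}$. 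The ratio $V/\|\phi\|_{L^{2}}^{2}\ge V/\Vol(B(x,\varrho/2))$ has \emph{no a priori bound}: the hypothesis $S\le A$ gives no volume doubling. So ``combining and rearranging'' does not produce $\mu(g)\le C(n)+A\varrho^{2}-\log(\varrho^{n}/V)$; it produces $\mu(g)\le C(n)\,V/\|\phi\|_{L^{2}}^{2}+A\varrho^{2}+\log V-\tfrac{n}{2}\log(4\pi\varrho^{2})-n$, which is useless when the first term is large. To close the argument you need an additional device. One standard fix is a dyadic dichotomy: if $V/\Vol(B(x,\varrho/2))\le 2^{n+1}$, the stated bound follows directly; otherwise descend to $B(x,\varrho/2)$ and iterate, using that $\Vol(B(x,s))/s^{n}\to\omega_{n}$ as $s\to 0$ to guarantee termination, and that the accumulated factor $(2^{n+1}/2^{n})^{k_{0}}\ge 1$ together with $e^{-A(\varrho/2^{k_{0}})^{2}}\ge e^{-A\varrho^{2}}$ lets the estimate propagate back to the original scale. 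Without such a step (or an alternative, e.g.\ a heat-kernel argument as Li--Wang actually employ) the proof does not close.
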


\section{Proof of Theorem \ref{thm.conicalannulus}}\label{nearlyconical-section}

We want to prove that if a Ricci shrinker contains a region that is near to a cone then the shrinker is asymptotically conical along an end containing that region. The main ingredients of the proof are the volume estimate from Corollary \ref{volumeball} and the following generalisation to Ricci shrinkers of Perelman's pseudo-locality theorem \cite{perelman}, due to Li and Wang \cite{heatkernel2}.

\begin{prop}[Li and Wang \cite{heatkernel2}]\label{pseudolocality} 
Let $n\in \N$ and $v\in (0,1)$. There exists a positive constant  $\varepsilon_{0}=\varepsilon_{0}(n,v)>0$ with the following property. 
Fix $r>0$ and let $(g_{t})_{t<1}$ be the Ricci flow associated with an $n$-dimensional Ricci shrinker $(M,g,f)$. 
Suppose that, for a given $(x_{0},t_{0})\in M\times (-\infty,1)$, the following curvature and volume conditions hold:
\begin{enumerate}
\item $|\Rm(y,t_{0})|\leq (vr)^{-2}$ for every $y\in B_{t_{0}}(x_{0},r)$;
\item $\Vol_{t_{0}}(B_{t_{0}}(x_{0},r))\geq vr^{n}$.
\end{enumerate} 
Then 
\begin{equation*}
|\Rm(y,t)|\leq (\varepsilon_{0}r)^{-2}
\end{equation*}
for every $y\in B_{t_{0}}(x_{0},(1-v)r)$ and every 
$t\in [t_{0}-(\varepsilon_{0}r)^{2},t_{0}+(\varepsilon_{0}r)^{2}]\cap (-\infty,1)$.
\end{prop}

\begin{rem} The main differences between Proposition \ref{pseudolocality} and the original Perelman result (Theorem 10.3 in \cite{perelman}) are the volume condition
$\Vol_{t_{0}}(B_{t_{0}}(x_{0},r))\geq v r^{n}$, which replaces the stronger requirement 
$\Vol_{t_{0}}(B_{t_{0}}(x_{0},r))\geq (1-\varepsilon_{0})\omega_{n}r^{n}$, where $\omega_{n}$ denotes the volume of the unit ball of 
$\R^{n}$, and the absence of any curvature assumption on $M$. 
Other versions of the pseudo-locality theorem for general Ricci flows 
require instead a global bound on the curvature; see for example \cite{peng} and \cite{simontopping}. 
\end{rem}

We are ready to prove Theorem \ref{thm.conicalannulus} from the introduction.

\begin{proof}[Proof of Theorem \ref{thm.conicalannulus}.]
Before we start, observe that the claim is trivially true if $M$ is the Gaussian shrinker, and therefore we can suppose $M$ not flat, or in other terms $S>0$. In this case $\varrho$ is everywhere smooth and strictly positive. The main idea of the proof is to find $a_{\ast}$ such that Proposition \ref{pseudolocality} can be applied to large balls contained in $\Omega$. In the following we always suppose that $a,b$ and $\alpha,\beta$ satisfy the desired conditions. 

Since $\Omega$ is $C^{2}$-near to $\mathcal{C}_{\Sigma}(a,b)$, thanks to \eqref{curvatureconicalset} we have $|\Rm|\leq C/a^{2}$ on $\Omega$, with $C>0$ depending only on $n$ and $\Sigma$. Using this $C$ and setting $r_{0}=\varepsilon_{0}^{-1}$, where 
$\varepsilon_{0}$ is given by Proposition \ref{pseudolocality} with $v=\min\{1/2,c(n)\}$, 
if $a\geq r_{0}\sqrt{C+1}$ it follows $|\Rm|\leq r_{0}^{-2}$ on $\Omega$. In particular, $S\leq n^2r_{0}^{-2}$ on $\Omega$ and recalling the fundamental relation \eqref{auxilliaryconstant} we have
\begin{equation*}
\vert \nabla f \vert^{2}=\varrho^{2}/4-S \geq \alpha^{2}/4-n^2r_{0}^{-2}=\frac{1}{4} (\alpha^{2}-4n^2r_{0}^{-2})
\end{equation*}
on $V$. If we take $\alpha \geq r_{0}\sqrt{C+1}+2nr_{0}^{-1}$ it follows $\vert \nabla f \vert >0$ on $V$. Therefore we require $a_{\ast}\geq r_{0}\sqrt{C+1}+2nr_{0}^{-1}$. With this choice, $\varrho(V)=(\alpha,\beta)$.

Now let $\xi \in (\alpha+r_{0},\beta-r_{0})$ be a regular value for $f$, where this interval is not empty being $\alpha>r_{0}$ and 
$\beta>3\alpha$. If $L=D(\xi)\cap V$ then $B(x_{0},r_{0})\subseteq V$ for every $x_{0}\in L$. Indeed, since $\varrho$ is $1$-Lipschitz, we have $|\varrho(x)-\varrho(x_{0})|\leq r_{0}$ for every $x$ in the ball and therefore
\begin{align*}
\varrho(x) &\leq \varrho(x_{0})+r_{0}=\xi+r_{0}<\beta,\\
\varrho(x) &\geq \varrho(x_{0})-r_{0}=\xi-r_{0}>\alpha.
\end{align*}
By connectedness $B(x_{0},r_{0})\subseteq V$ for every $x_{0}\in L$. Now let $a_{0}=a_{0}(\Sigma,n,r_{0})=a_{0}(\Sigma,n)$ be the constant given by Corollary \ref{volumeball} and define  $a_{\ast}=\max\{r_{0}\sqrt{C+1}+2nr_{0}^{-1}, a_{0}\}$. Corollary \ref{volumeball} yields $\Vol(B(x_{0},r_{0}))\geq c(n)r_{0}^{n}\geq v r_{0}^{n}$ for every ball with $x_{0}\in L$. At this point we can apply the pseudo-locality theorem, Proposition \ref{pseudolocality}, with $t_{0}=0$, obtaining 
\begin{equation*}
|\Rm(y,t)|\leq \varepsilon_{0}r_{0}=1
\end{equation*}
for every $y\in B(x_{0},(1-v)r_{0})$ and $t\in [-1,1)$, where $x_{0}\in L$. 
Using the self-similarity of $g_{t}$, the preceding condition can be restated as
\begin{equation}
|\Rm(y)|\leq 1-t
\end{equation}
for every $y\in \psi_{t}(B(x_{0},(1-v)r_{0}))$ and $t\in [-1,1)$. 
In particular $|\Rm(\psi_{t}(x_{0}))|\leq 1-t$ for every $x_{0}\in L$ and $t\in [0,1)$.

Now take some $x\in L$. From (\ref{evolution2}) and $|\nabla f|^{2} \leq f+c(g)$, we obtain the differential inequality
\begin{equation*}
\frac{d}{dt}\left(f\circ \psi_{t}(x)\right)\leq \frac{f(\psi_{t}(x))+c(g)}{1-t}.
\end{equation*}
Considering the (positive) function $f(\psi_{t}(x))+c(g)$ and integrating, it follows that
\begin{equation*}
f(\psi_{t}(x))+c(g)\leq \frac{f(x)+c(g)}{1-t}
\end{equation*}
for every $t\in [0,1)$. Therefore
\begin{equation}\label{disugriem}
|\Rm(\psi_{t}(x))|\leq 1-t \leq \frac{f(x)+c(g)}{f(\psi_{t}(x))+c(g)}=\frac{\varrho^{2}(x)}{\varrho^{2}(\psi_{t}(x))}
\end{equation}
for every $t\in [0,1)$.

Let $W$ be the connected component of $D[\xi,\infty)$ containing $L$. Note that $L$ is connected since $V$ does not contain any critical point of $f$. We claim that the flow of $\nabla f$ gives a diffeomorphism $\phi \colon L\times [0,\infty)\to W$. Let us first verify that $\phi$ is injective. Indeed $\phi(x_{0},t_{0})=\phi(x_{1},t_{1})$ if and only if $\phi(x_{1},t_{1}-t_{0})=x_{0}$. But since $\varrho$ is non-decreasing along the flow lines and there are no critical points on $L$, this implies $t_1-t_0=0$ and $x_1=x_0$. Now, let us prove that $\phi$ is onto. Take $y \in W$. If $y \in L$ we are done, so suppose $y\notin L$ and consider a continuous curve $\gamma\colon [0,1]\to W$ from a point $q\in L$ to $y$. Define
\begin{equation*}
I=\{t\in [0,1]\colon \gamma(t)=\phi(x,s)\ \text{for some}\ (x,s)\in L\times [0,+\infty)\}.
\end{equation*}
If we prove that $I$ is open and closed in $[0,1]$ then $I=[0,1]$ since $0\in I$, and the claim follows. As $V\subseteq \phi(L\times \R)$ and $\varrho(\gamma(t))\geq \xi$, it is easy to see that $I=\gamma^{-1}\big( \bigcup_{s\geq 0}\phi_{s}(V)\big)$ and thus $I$ is open in $[0,1]$. To show that $I$ is closed, let $(t_{i})_{i \in \N}\subseteq I$ be a sequence converging to $t_{\ast}\in [0,1]$. 
By assumption, $\gamma(t_{i})=\phi(x_{i},s_{i})$ for some $x_{i}\in L$ and $s_{i}\geq 0$. Note that $(s_{i})_{i \in \N}$ is bounded. Otherwise, assuming $s_{i}\to \infty$, \eqref{disugriem} and \eqref{cambioparametro2} imply
\begin{equation*}
\vert \Rm(\phi(x_{i},s_{i}))\vert =\vert \Rm(\psi(x_{i},1-e^{-s_{i}}))\vert \leq 1-(1-e^{-s_{i}})=e^{-s_{i}}\to 0
\end{equation*}
as $i\to \infty$ and therefore $\vert \Rm(\gamma(t_{i}))\vert \to 0$; but $\gamma(t_{i})\to \gamma(t_{\ast})$, which implies $\vert\Rm(\gamma(t_{\ast}))\vert=0$, contradicting the assumptions on $M$. It follows, up to a subsequence, that $s_{i}\to s_{\ast}\in [0,\infty)$. Since $L$ is compact we can suppose $x_{i}\to x_{\ast}\in L$. Therefore $\phi(x_{i},s_{i})$ converges to $\phi(x_{\ast},s_{\ast})$ and $\gamma(t_{\ast})=\phi(x_{\ast},s_{\ast})$. This gives $t_{\ast}\in I$ proving that $I$ is indeed closed. In conclusion, $\phi\colon L\times [0,\infty)\to W$ is a diffeomorphism and in particular $W$ contains no critical points of $f$.

From Proposition \ref{prop.ends-shrinker} we know that $W$ is a connected submanifold with boundary of $M$, where $\partial{W}=L$. Moreover $\varrho(W)=[\xi,\infty)$ and $W^{\circ}$ is an end of $M$ relative to $D[0,\xi]$. Since $W$ does not contain any critical point of $f$, again from Proposition \ref{prop.ends-shrinker} there exists a unique end $\e\in \mathcal{E}(M)$ such that $\e(D[0,\xi])=W^{\circ}$. In particular $\e(D[\alpha,\infty))$ contains $V$, since $W\subseteq \e(D[\alpha,\infty))$ and $W\cap V\neq \emptyset$. Denote $E=\e(D[0,\alpha])$ for short.

Consider $V$ and $W$. First of all $V\cap W \neq \emptyset$. Moreover $V\cap D[\xi,\infty)\subseteq W$, being connected and intersecting $W$; similarly $W\cap D(\alpha,\beta)=W\cap D[\xi,\beta)\subseteq V$ since it is connected and intersects $V$. Now the set $A=V\cup W=V\cup W^{\circ}$ is obviously connected and is open. Since $A\subseteq D(\alpha,\infty)$ and it intersects $E$, we conclude $A\subseteq E$. We claim that indeed $A=E$. In order to prove this, take points $x\in A$, $y\in E$ and a curve $\gamma\colon [0,1]\to E$ connecting them. Consider $I=\{t\in [0,1]\colon \gamma(t)\in A\}$. Then $I\neq \emptyset$ because $0\in I$ and it is open in $[0,1]$. To show that $I$ is closed, take a sequence $(t_{i})_{i \in \N}$ in $I$ converging to $t_{\ast}\in [0,1]$. Then $x_{i}=\gamma(t_{i}) \in A$ and $x_{i}\to x_{\ast}=\gamma(t_{\ast})\in E$ by continuity. Let $\lambda_{\ast}=\varrho(x_{\ast})$ and take $\delta>0$ to be determined. Then there exists $\varepsilon>0$ such that $\varrho(\gamma(t))\in (\lambda_{\ast}-\delta,\lambda_{\ast}+\delta)$ for every $t \in I_{\varepsilon}=(t_{\ast}-\varepsilon,t_{\ast}+\varepsilon)$. Fix a time $t_{i}\in I_{\varepsilon}$ and consider $x_{i}\in A$. Suppose $\lambda_{\ast}>\xi$. If $\delta<\lambda_{\ast}-\xi$ then $\varrho(\gamma(t))>\xi$ for $t\in I_{\varepsilon}$ and therefore $x_{i}\in A\cap D(\xi,\infty)$. In particular $x_{i}\in W$ and the restriction of $\gamma$ between $t_{i}$ and $t_{\ast}$ is a curve in $D(\xi,\infty)$ starting from $x_{i}\in W$. It follows that $x_{\ast}\in W$. Now, suppose $\varrho(x_{\ast})\leq \xi$. If $\delta<\min\{\lambda_{\ast}-\alpha,\beta-\lambda_{\ast}\}$ then $\varrho(\gamma(t))\in (\alpha,\beta)$ for $t\in I_{\varepsilon}$ and $x_{i}\in A\cap D(\alpha,\beta)=V$. Therefore the restriction of $\gamma$ between $t_{i}$ and $t_{\ast}$ is a curve in $D(\alpha,\beta)$ starting from $x_{i}\in V$. This implies $x_{\ast}\in V$ and we are done, having $x_{\ast}\in A$ in both cases. This proves that $I$ is closed and therefore $I=[0,1]$ giving $y\in A$ and thus $E=A$.

Thanks to the equality, $E$ does not contain any critical point of $f$. In particular $\e$ is the unique end of $M$ with $\e(D[0,\alpha])=E$ and furthermore the only one with $V\subseteq \e(D[0,\alpha])$. Indeed if $\widetilde{\e}\in \mathcal{E}(M)$ satisfies $V\subseteq \widetilde{\e}(D[0,\alpha])$ then necessarily $\e(D[0,\alpha])=\widetilde{\e}(D[0,\alpha])$ and therefore $\e=\widetilde{\e}$.

It only remains to prove that $\e$ is asymptotically conical. Using \eqref{cambioparametro}, it is easy to see that the map
\begin{equation*}
\psi\colon \partial W \times [0,1) \to W,\ (t,x)\mapsto \psi_{t}(x)
\end{equation*}
is a diffeomorphism. At this point take $y\in W$. Then $y=\psi_{t}(x)$ for some $(x,t)\in \partial W \times [0,1)$ and using \eqref{disugriem} we have
\begin{equation*}
|\Rm(y)|=|\Rm(\psi_{t}(x))|\leq \frac{\varrho^{2}(x)}{\varrho^{2}(\psi_{t}(x))}=\frac{\varrho^{2}(x)}{\varrho^{2}(y)}=\frac{\xi^{2}}{\varrho^{2}(y)}.
\end{equation*}
This proves that $\Rm$ is quadratically decaying to zero as $|y|\to \infty$ along the end $W^{\circ}$, which implies that $W^{\circ}$ is smoothly asymptotically conical thanks to the argument in Section 2.2 of \cite{kw}, finishing the proof.
\end{proof}

\section{Estimating the number of asymptotically conical ends}\label{conicalends-estimate-section}

In this section, we prove Theorems \ref{thm.conicalbound} and \ref{thm.limitconicalbound}. We have seen in Corollary \ref{cor.curvatureonend} that if $E$ is an asymptotically conical end, then the curvature decays quadratically along $E$,
\begin{equation*}
|\Rm(x)|\leq Cd(p,x)^{-2},
\end{equation*}
where $C$ is a positive constant depending on $E$. In the smooth case, the condition is indeed equivalent to $E$ being asymptotically conical (see e.g. \cite{kw}).

This curvature decay allows us to perform a counting argument for conical ends similar to the one in Section \ref{section-estimate-number-ends}. We start with a result similar to Proposition \ref{endspecialcase}.

\begin{prop}\label{splitconicalend} 
Let $\X$ be a singular Ricci shrinker and let $\e_{1},\ldots,\e_{k}$ be distinct asymptotically conical ends of $\X$. Then for every $r>0$ there exists a compact subset $K$ containing $D[0,r]$ such that $E_{1}=\e_{1}(K),\ldots,E_{k}=\e_{k}(K)$ are distinct asymptotically conical ends relative to $K$.
\end{prop}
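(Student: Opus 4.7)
The strategy is to start from the compact sets provided by the definition of each conical end and enlarge them in a controlled way. By Definition \ref{abstractconicalend}, for each $i=1,\ldots,k$ there exist a compact $K_i\subseteq \X$ and a diffeomorphism $\varphi_i\colon \mathcal{C}_{\Sigma_i}(a_i,\infty)\to \e_i(K_i)$ realising the conical structure. Since the abstract ends $\e_i$ are pairwise distinct, Proposition \ref{prop.ends-general}(4) yields a compact $K_0\subseteq\X$ for which $\e_1(K_0),\ldots,\e_k(K_0)$ are pairwise distinct. As a first candidate, take $L:=D[0,r]\cup K_0\cup K_1\cup\cdots\cup K_k$; this is compact, contains $D[0,r]$, and the relative ends $\e_i(L)$ are pairwise disjoint.

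Two preparatory observations are then needed. First, the tails of the cones eventually lie in $\e_i(L)$: by Proposition \ref{propconicalend}(2) the preimage $\varphi_i^{-1}(L\cap \e_i(K_i))$ is bounded in the cone, so there exists $R_i>a_i$ such that $\varphi_i(\mathcal{C}_{\Sigma_i}(R_i,\infty))$ is disjoint from $L$; being connected, unbounded and contained in $\e_i(K_i)$, it must lie in the unique unbounded component of $\e_i(K_i)\setminus L$, which is $\e_i(L)$ (Proposition \ref{propconicalend}(3)). Second, $\e_j(L)\cap \e_i(K_i)=\emptyset$ whenever $j\neq i$: indeed $\e_j(L)$ is a connected subset of $\X\setminus K_i$, hence lies in one connected component of $\X\setminus K_i$; if that component were $\e_i(K_i)$, then $\e_i(K_i)$ would contain the two distinct ends $\e_i(L)$ and $\e_j(L)$ of $\X$ relative to $L$, contradicting Proposition \ref{propconicalend}(3).

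Now fix $c_i>R_i$ for each $i$ and define
\[
K:=L\cup \bigcup_{i=1}^{k}\varphi_i\bigl(\mathcal{C}_{\Sigma_i}[c_i,c_i+1]\bigr),
\]
which is compact and still contains $D[0,r]$. The claim is that $\e_i(K)=\varphi_i(\mathcal{C}_{\Sigma_i}(c_i+1,\infty))$ for every $i$. Working in cone coordinates, the two observations above imply that $\varphi_i^{-1}(K\cap \e_i(K_i))$ is contained in $\mathcal{C}_{\Sigma_i}(a_i,R_i]\cup \mathcal{C}_{\Sigma_i}[c_i,c_i+1]$; hence the unique unbounded connected component of its complement in $\mathcal{C}_{\Sigma_i}(a_i,\infty)$ is $\mathcal{C}_{\Sigma_i}(c_i+1,\infty)$, which by Proposition \ref{propconicalend}(3) must coincide with $\varphi_i^{-1}(\e_i(K))$. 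The restriction $\varphi_i|_{\mathcal{C}_{\Sigma_i}(c_i+1,\infty)}$ then exhibits $E_i=\e_i(K)$ as a $C^{k}$-asymptotically conical end of $\X$ relative to $K$, and the inclusions $\e_i(K)\subseteq \e_i(K_0)$ guarantee pairwise distinctness.

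The main subtlety, and the reason for including the cross-sections in $K$, is that the relative end $\e_i(L)$ itself need not be the image of a full conical annulus: its preimage under $\varphi_i$ is only the unique unbounded connected component in the cone, and may in principle contain bounded ``fingers'' reaching below the level $R_i$. Inserting the complete cross-sections $\varphi_i(\mathcal{C}_{\Sigma_i}[c_i,c_i+1])$ into $K$ is precisely what slices off any such fingers and leaves the clean tail $\mathcal{C}_{\Sigma_i}(c_i+1,\infty)$, while the disjointness $\e_j(L)\cap \e_i(K_i)=\emptyset$ ensures that cross-sections from different ends do not interfere with one another.
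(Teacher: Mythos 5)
Your proof is correct and follows essentially the same strategy as the paper: start from the compacts $K_i$ provided by Definition \ref{abstractconicalend}, enlarge them so that the relative ends become pairwise distinct, and then insert conical cross-sections so that each relative end is a clean conical tail. The paper packages the cross-section step via Proposition \ref{propconicalend}(4) (taking the compact $H_i = K_i \cup \overline{\varphi_i(\mathcal{C}_{\Sigma_i}(a_i,b_i])}$) and uses $D[0,s]$ in place of your $L$, but these are organizational rather than substantive differences.
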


\begin{proof}
Let $r>0$ be given and consider an asymptotically conical end $\e_{i}$. By Definition \ref{abstractconicalend}, we can find a compact subset $K_{i}$ such that $F_{i}=\e_{i}(K_{i})$ is an asymptotically conical end relative to $K_{i}$, and therefore there is a diffeomorphism $\varphi_{i}\colon \mathcal{C}_{\Sigma_{i}}(a_{i},\infty)\to F_{i}$ as in Definition \ref{asymptconical}. If we take $s>r$ such that $\bigcup_{i=1}^{k}K_{i}\subseteq D[0,s]$, it is easy to verify that the ends $\e_{1}(D[0,s]), \ldots, \e_{k}(D[0,s])$ are distinct. Now using \eqref{radialdistancebasepoint} we can find $b_{i}>a_{i}$ such that $E_{i}=\varphi_{i}(\mathcal{C}_{\Sigma_{i}}(b_{i},\infty))$ is contained in $D(s,\infty)$. Moreover, Points 3 and 4 of Proposition \ref{propconicalend} imply that $E_{i}$ is the unique end relative to the compact subset $H_{i}=K_{i}\cup \varphi_{i}(\mathcal{C}_{\Sigma_{i}}(a_{i},b_{i}])$ that is contained in $F_{i}$. Furthermore, this end coincides with $\e_{i}(H_{i})$ and is asymptotically conical. By construction $E_{1},\ldots,E_{k}$ are pairwise disjoint and an easy argument proves that they are ends relative to the compact subset given by $K=D[0,s]\cup \bigcup_{i=1}^{k}H_{i}$.
\end{proof}

In order to estimate the number of conical ends, some care is needed. Indeed the constant $C$ appearing in \eqref{curvaturedecayconicalend} depends on $E$, more precisely on the Riemannian manifold $(\Sigma,g_{\Sigma})$ to whose cone $E$ is asymptotic, and therefore it is not possible, at least in general, to get uniform estimates. For this reason we restrict the number of these possible $\Sigma$. We first prove the following version of Theorem \ref{thm.conicalbound}, which also assumes an entropy bound.

\begin{prop}\label{estimate-conical-ends}
Given $n \in \mathbb{N}$, $\underline{\mu}>-\infty$, and a finite collection $\h$ of connected compact Riemannian manifolds of dimension $n-1$, there exists $\Lambda=\Lambda(n,\underline{\mu},\h)$ with the following property. Let $M$ be a smooth $n$-dimensional Ricci shrinker with entropy bounded below $\mu\geq \underline{\mu}$ and suppose that every asymptotically conical end of $M$ has asymptotic cone $\mathcal{C}_{\Sigma}$ for some $\Sigma\in \h$. Then $M$ has at most $\Lambda$ asymptotically conical ends.
\end{prop}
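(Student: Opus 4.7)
The plan is to use a volume packing argument: each asymptotically conical end with cross-section $\Sigma_i \in \mathcal{H}$ contributes, at radius $R$, a volume comparable to $\Vol(\Sigma_i) R^n/n$, while by \eqref{volumegrowth} the ambient ball $B(p,R)$ has Euclidean volume growth $C_0(n) R^n$. Since $\mathcal{H}$ is finite, we have a uniform lower bound $V_0 := \min_{\Sigma \in \mathcal{H}} \Vol(\Sigma) > 0$, so the number of conical ends is controlled by a constant depending only on $n$ and $\mathcal{H}$. (The entropy bound will in fact not be needed.)

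Suppose $M$ has $k$ distinct asymptotically conical ends $\e_1, \ldots, \e_k$. First I would apply Proposition \ref{splitconicalend} to produce a compact $K \subseteq M$ and disjoint asymptotically conical ends $E_i = \e_i(K)$ equipped with diffeomorphisms $\varphi_i\colon \mathcal{C}_{\Sigma_i}(a_i,\infty) \to E_i$ satisfying \eqref{conicalconditionmetric}--\eqref{conicalcondition}. Given $\varepsilon \in (0,1)$, the $\ell = 0$ case of \eqref{conicalcondition} lets me choose $b_i \geq a_i$ so that $|\varphi_i^\ast g - g_c|_{g_c} \leq \varepsilon$ on $\mathcal{C}_{\Sigma_i}(b_i,\infty)$. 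A standard linear-algebra estimate then gives the pointwise density bound
\begin{equation*}
(1-\varepsilon)^{n/2} \leq \frac{d\nu_{\varphi_i^\ast g}}{d\nu_{g_c}} \leq (1+\varepsilon)^{n/2},
\end{equation*}
and integrating against the cone volume form $r^{n-1}\,dr\,d\nu_{\Sigma_i}$ yields
\begin{equation*}
\Vol_g(\varphi_i(\mathcal{C}_{\Sigma_i}(b_i,R))) \geq (1-\varepsilon)^{n/2}\,\frac{R^n - b_i^n}{n}\,\Vol(\Sigma_i)
\end{equation*}
for every $R > b_i$.

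Next, using the asymptotic distance relation \eqref{radialdistancebasepoint} on each end, I would take $R$ large enough that every $\varphi_i(\mathcal{C}_{\Sigma_i}(b_i,R))$ lies inside the single ambient ball $B(p,(1+\varepsilon)R)$. Summing over the pairwise disjoint $E_i$ and using \eqref{volumegrowth},
\begin{equation*}
k (1-\varepsilon)^{n/2}\,\frac{R^n - \max_i b_i^n}{n}\, V_0 \leq \sum_{i=1}^{k} \Vol_g(\varphi_i(\mathcal{C}_{\Sigma_i}(b_i,R))) \leq \Vol(B(p,(1+\varepsilon)R)) \leq C_0(n)(1+\varepsilon)^n R^n.
\end{equation*}
Dividing by $R^n$, sending $R \to \infty$ and then $\varepsilon \to 0$, we obtain $k \leq nC_0(n)/V_0$, which is the claimed bound $\Lambda = \Lambda(n,\mathcal{H})$.

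The main subtlety is the order of parameter choices: first fix $\varepsilon$, then choose the $b_i$ (depending on $\varepsilon$ and on each specific end), and only afterward pick $R$ large compared to $\max_i b_i$. This is legitimate precisely because we are bounding the number of ends of a single, fixed shrinker $M$, so the (a priori finite) quantity $\max_i b_i$ can be absorbed by taking $R \to \infty$; any attempt to make the thresholds uniform across a varying sequence of shrinkers would require a different argument, which explains why the more delicate Theorems \ref{thm.conicalbound} and \ref{thm.limitconicalbound} will need additional ingredients beyond this packing estimate.
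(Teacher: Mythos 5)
Your proof is correct, and it takes a genuinely different route from the paper's. The paper's argument proceeds by ball packing: after splitting into disjoint conical ends via Proposition \ref{splitconicalend}, it invokes the curvature-decay estimate \eqref{curvaturedecayconicalend}, uses the finiteness of $\h$ to get a uniform scalar curvature bound on balls of a fixed radius $r_0$ placed one in each end, and then applies the Li--Wang entropy-dependent volume lower bound (Proposition \ref{liwangteo}) so that \eqref{volumegrowth} gives a packing bound; the entropy thus enters through the volume non-collapsing. Your argument bypasses both the curvature decay and the entropy entirely: you directly integrate the Jacobian comparison from \eqref{conicalcondition} over the entire conical annulus $\mathcal{C}_{\Sigma_i}(b_i,R)$ to estimate the volume each end occupies in $B(p,(1+\varepsilon)R)$, and the finiteness of $\h$ is used only to get $V_0 = \min_{\Sigma\in\h}\Vol(\Sigma) > 0$. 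The limiting device of sending $R\to\infty$ before $\varepsilon\to 0$ cleanly absorbs the end-dependent thresholds $b_i$, since there are only finitely many of them on a single shrinker, as you correctly point out. Your approach is therefore more elementary and establishes the strictly stronger conclusion with $\Lambda$ independent of $\underline{\mu}$, which is what the paper eventually proves as Theorem \ref{thm.conicalbound} --- though there it still argues by ball packing, using the nearly conical volume estimate of Corollary \ref{volumeball} in place of Proposition \ref{liwangteo}. Your whole-annulus integration is arguably the shortest of the three. One minor simplification you could make: rather than handling the low-radius region separately when invoking \eqref{radialdistancebasepoint}, you can simply enlarge each $b_i$ so that $d(p,x)\le(1+\varepsilon)\r_i(x)$ holds on all of $\mathcal{C}_{\Sigma_i}(b_i,\infty)$, making the containment $\varphi_i(\mathcal{C}_{\Sigma_i}(b_i,R))\subseteq B(p,(1+\varepsilon)R)$ immediate.
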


\begin{proof}
Let $\e_{1},\ldots,\e_{k}$ be distinct conical ends of $M$. By the preceding Proposition \ref{splitconicalend}, we can find a compact subset $K\subseteq \X$ such that $E_{1}=\e_{1}(K),\ldots, E_{k}=\e_{k}(K)$ are distinct conical ends relative to $K$. Thanks to Proposition \ref{curvaturafineconica} applied to each of these ends, there exists $r_{0}>5n$ such that $K\subseteq D[0,r_{0})$ and  
\begin{equation*}
|\Rm(x)|\leq C_{i}d(x,p)^{-2}
\end{equation*}
for every $x\in E_{i}$ with $d(x,p)\geq r_{0}$, for $i=1,\ldots,k$.  As seen in Corollary \ref{cor.curvatureonend}, the constants $C_{i}$ depend only on $n$ and $(\Sigma_{i},g_{\Sigma_{i}})\in \h$. Therefore, as $\h$ is finite, we can write $C_{i}\leq C_{\h}$ for some constant $C_{\h}>0$. At this point we can use a standard counting argument.

Take $\xi=3r_{0}$ and choose a point $x_{i}\in E_{i}\cap D(\xi)$ for every $i=1,\ldots,k$. A standard computation shows that each ball $B(x_{i},r_{0})$ is contained in $E_{i}\cap D(2r_{0},4r_{0})$ and therefore they are pairwise disjoint. Using \eqref{rhogrowth} as usual it follows $B(x_{i},r_{0})\subseteq B(p,r)$, where $r=4r_{0}+5n$. Now we can estimate the scalar curvature, getting
\begin{equation*}
S(y)\leq \frac{n^2C_{\h}}{d(y,p)^{2}}\leq \frac{n^2C_{\h}}{(2r_{0}-\sqrt{2n})^{2}}\leq \frac{n^2C_{\h}}{r_{0}^{2}}
\end{equation*} 
for every $y \in B(x_{i},r_{0})$. Therefore $S(y)r_{0}^{2}\leq n^2C_{\h}$ and Proposition \ref{liwangteo} gives
\begin{equation}\label{stimavolume}
\Vol(B(x_{i},r_{0}))\geq c(n) e^{\underline{\mu} -n^2C_{\h}}r_{0}^{n}.
\end{equation}
Using the volume growth \eqref{volumegrowth}, we deduce that the number of disjoint balls of radius $r_{0}$ contained in $B(p,r)$ and satisfying \eqref{stimavolume} is at most 
\begin{equation*}
\Lambda=\Lambda(n,\underline{\mu},\h)=\frac{C_{0}(n)5^{n}}{c(n)e^{\underline{\mu}-n^2C_{\h}}},
\end{equation*}
and we are done. 
\end{proof}

\begin{rem}
Clearly, instead of assuming the condition on finitely many different types of asymptotically conical ends, the result also holds if we start with the stronger \emph{global} curvature decay assumption $|\Rm(x)|\leq C d(x,p)^{-2}$.
\end{rem}

If we use Corollary \ref{volumeball}, we can remove the entropy assumption from the preceding result. 

\begin{proof}[Proof of Theorem \ref{thm.conicalbound}.]
Assume first that we have a smooth Ricci shrinker $(M,g,f)$. Let $\e_{1},\ldots,\e_{k}$ be distinct asymptotically conical ends of $M$. By Proposition \ref{splitconicalend}, we can find a compact subset $K\subseteq M$ and diffeomorphisms $\varphi_{i}\colon \mathcal{C}_{\Sigma_{i}}(a_{i},\infty)\to E_{i}$ as in Definition \ref{asymptconical}, such that $E_{1}=\e_{1}(K),\ldots, E_{k}=\e_{k}(K)$ are distinct conical ends relative to $K$. Moreover, looking at the proof, it is easy to see that we can indeed take $a_{i}=a$ for every $i$ and suppose 
\begin{equation}\label{metric-estim}
|g_{c}^{i}-\varphi_{i}^{\ast}g|\leq \varepsilon
\end{equation}
on $\mathcal{C}_{\Sigma_{i}}(a,\infty)$, where $g_{c}^{i}$ is the conical metric on $\mathcal{C}_{\Sigma_{i}}$ and $\varepsilon \in (0,1/2)$. 

Denote by $\r_{i}\colon E_{i}\to (a,\infty)$ the radial coordinate on $E_{i}$. Thanks to Proposition \ref{propconicalend} together with \eqref{rhogrowth} there exists $r_{0}>5n$ such that $K\subseteq D[0,r_{0})$ and 
\begin{equation}\label{dist-estim}
\frac{1}{2}\r_{i}(x)\leq \varrho(x)\leq 2\r_{i}(x)
\end{equation} 
for every $x\in E_{i}$ with $\varrho(x)>r_{0}$. Considering Corollary \ref{volumeball} and Remark \ref{volumeball-rem}, we define $\sigma=\min\{\text{inj}(\Sigma)\colon \Sigma \in \h\}$ and $\alpha=2r_{0}(1+\sigma^{-1})$. Letting $\Omega_{i}=\varphi_{i}^{-1}(\mathcal{C}_{\Sigma_{i}}(\alpha,\infty))\subseteq E_{i}$, \eqref{metric-estim} and \eqref{dist-estim} imply that $\Omega$ is $C^{0}$-near to the cone $\mathcal{C}_{\Sigma_{i}}(\alpha,\infty)$ and is contained in $E_{i}\cap D(r_{0},\infty)$. For every $i$, choose a point $x_{i}\in E_{i}\cap D(\xi)$, where $\xi=2\alpha+r_{0}$, and consider the ball $B(x_{i},r_{0})$. A standard computation gives $B(x_{i},r_{0})\subseteq E_{i}\cap D(2\alpha,\xi+r_{0})$ and in particular these balls are pairwise disjoint. Moreover, any $y \in B(x_{i},r_{0})$ satisfies $\r_{i}(y)>\frac{1}{2}\varrho(y)>\alpha$ due to \eqref{dist-estim}, and therefore $B(x_{i},r_{0})\subseteq \Omega_{i}$, being $\varphi_{i}$ one to one. At this point we can apply Corollary \ref{volumeball}, which gives 
\begin{equation}\label{vol-estim}
\Vol(B(x_{i},r_{0}))\geq c(n)r_{0}^{n}.
\end{equation}
On the other hand, $B(x_{i},r_{0})\subseteq B(p,r)$, where $r=2\alpha+4r_{0}$. Using the volume growth \eqref{volumegrowth}, it follows that the number of disjoint balls satisfying \eqref{vol-estim} and contained in 
$B(p,r)$ is at most
\begin{equation*}
\Lambda=\Lambda(n,\h)=\frac{C_{0}(n)4^{n}(2+\sigma^{-1})^{n}}{c(n)}
\end{equation*}
and the claim follows. 

Finally, note that the preceding argument can be extended, with minor modifications, to the case of limit singular Ricci shrinkers, using in particular the volume growth estimates from Proposition \ref{prop.limitsingularRS} in the last part of the argument.
\end{proof}

Finally, using Theorem \ref{thm.conicalannulus}, proved in the previous section, we can show that no new conical end appears in a limit of a sequence of Ricci shrinkers.

\begin{proof}[Proof of Theorem \ref{thm.limitconicalbound}.]
The proof is based on the same idea already seen in the proof of Theorem \ref{thm.limitendbound}, that is constructing an injective map $\lambda_{i}\colon \mathcal{E}_{c}(\X)\to \mathcal{E}_{c}(M_{i})$ for every $i$ large enough. As usual $(U_{i})_{i\in \N}$ and $(\phi_{i}\colon U_{i}\to M_{i})_{i \in \N}$ are as in Definition \ref{def.singularCG}.

First, we can suppose $\X$ is not compact and with at least one conical end, otherwise the claim is trivial. Note that for the moment this does not imply that $M_{i}$ is not compact for $i$ large!

Suppose the set $\mathcal{E}_{c}(\X)$ is finite. By definition every $\e\in \mathcal{E}_{c}(\X)$ is asymptotic to a cone over some manifold $\Sigma_{\e}$ and we let $a_{\ast}(\e)=a_{\ast}(n,\Sigma_{\e})$ be the positive constant given by Theorem \ref{thm.conicalannulus}. We also denote $a_{\ast}=\max\{a_{\ast}(\e)\colon \e\in \mathcal{E}_{c}(\X)\}+1$.

Thanks to Proposition \ref{splitconicalend}, we can find a compact subset $K\subseteq \X$ containing $D[0,a_{\ast}]$ such that 
$E_{\e}=\e(K)$, for $\e \in \mathcal{E}_{c}(\X)$, are distinct conical ends relative to $K$. In particular for every $\e$ we get a diffeomorphism $\varphi_{\e}\colon \mathcal{C}_{\Sigma_{\e}}(a_{\e},\infty)\to E_{\e}$ as in Definition \ref{asymptconical}. If we look at the proof of Proposition \ref{splitconicalend}, it is easy to see that, for every $\e$, we can suppose $a_{\e}=a_{\diamond} >a_{\ast}$ and
\begin{equation}\label{closenesslimitend}
\sup_{\mathcal{C}_{\Sigma_{\e}}(a_{\diamond},\infty)} \r(x)^{\ell}|\nabla^{\ell}(\varphi_{\e}^{\ast}g-g_{c})|\leq \varepsilon
\end{equation} 
for every $0\leq \ell \leq 2$, with $\varepsilon \in (0,1/10)$. It follows that $|\Rm|\leq C/a_{\diamond}^{2}$ on $E_{\e}$ and, arguing as in the proof of Theorem \ref{thm.conicalannulus}, we get $|\nabla f|>0$ on $E_{\e}$, by the definition of $a_{\ast}(\e)$. Note that $E_{\e}\subseteq \reg$ and so we can use \eqref{auxilliaryconstant} without problems.

Let $s>a_{\ast}$ be such that $K\subseteq D[0,s]$ and consider an end $\e \in \mathcal{E}_{c}(\X)$. In the following we drop the subscript $\e$. From Proposition \ref{propconicalend} we know that there exists only one end relative to $D[0,s]$ that is contained in $E$, and we denote it by $E(s,\infty)$ as usual. Similarly, applying Proposition \ref{prop.ends-shrinker} to $E(s,\infty)$, if $I\subseteq (s,\infty)$ is an interval then there exists a unique connected component of $D(I)$ that is contained in $E(s,\infty)$; denote it by $E(I)$. Moreover $\psi(E(I))\subseteq \mathcal{C}_{\Sigma}(a,\infty)$, where $\psi=\varphi^{-1}$, 
$\varrho(E(I))=I$ and $E(I)$ is compact if $I$ is. Now choose any $\alpha_{0},\beta_{0}>s$ with $\beta_{0}>3\alpha_{0}+1$ and consider $E[\alpha_{0},\beta_{0}]$. Being compact there exist $a>a_{\diamond}$ and $b>3a$ such that $\psi(E[\alpha_{0},\beta_{0}])\subseteq \mathcal{C}_{\Sigma}[a,b]$. If we define $\Omega=\varphi(\mathcal{C}_{\Sigma}(a,b))$ then $\Omega \Subset \reg$ and therefore there exists $i_{0}\in \N$ such that $\Omega\subseteq U_{i}$ for every $i\geq i_{0}$. 

\begin{claim}\label{primo} There exist an interval $(\alpha,\beta)\subseteq (s,\infty)$, with $\beta>3\alpha$, and an integer $i_{\ast}\geq i_{0}$ such that for every $i\geq i_{\ast}$ a connected component $V_{i}$ of $D_{i}(\alpha,\beta)$ is contained in $\phi_{i}(E[\alpha_{0},\beta_{0}])$.
\begin{proof}
The proof is almost identical to the one of the analogue Claim \ref{cc} seen in Theorem \ref{thm.limitendbound}, so we omit it. 
Observe only that the constants $\alpha, \beta$ one finds satisfy $\beta>3\alpha$, as desired.
\end{proof}
\end{claim}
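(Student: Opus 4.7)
The plan is to adapt the argument of Claim~\ref{cc} essentially verbatim, with the only new bookkeeping being the requirements $(\alpha,\beta)\subseteq (s,\infty)$ and $\beta>3\alpha$. Since $\alpha_{0}>s$ and $\beta_{0}>3\alpha_{0}+1$, I first fix $\varepsilon>0$ with
\begin{equation*}
\varepsilon \;<\; \tfrac{1}{4}\min\bigl\{\alpha_{0}-s,\ \beta_{0}-3\alpha_{0}-1,\ \beta_{0}-\alpha_{0}\bigr\},
\end{equation*}
and set $\alpha=\alpha_{0}+2\varepsilon$ and $\beta=\beta_{0}-2\varepsilon$. By the choice of $\varepsilon$, one immediately checks $(\alpha,\beta)\subseteq (s,\infty)$ and $\beta>3\alpha$.

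Next, I use that $E[\alpha_{0},\beta_{0}]\Subset\reg$ together with the $C^{\infty}_{\mathrm{loc}}$-convergence on $\reg$, which in particular gives $\phi_{i}^{\ast}\varrho_{i}\to \varrho$ uniformly on any compact set of $\reg$. Hence there exists $i_{\ast}\geq i_{0}$ such that $E[\alpha_{0},\beta_{0}]\subseteq U_{i}$ and $|\phi_{i}^{\ast}\varrho_{i}-\varrho|<\varepsilon$ on $E[\alpha_{0},\beta_{0}]$ for every $i\geq i_{\ast}$. A direct computation then yields
\begin{equation*}
\phi_{i}\bigl(E[\alpha_{0}+3\varepsilon,\beta_{0}-3\varepsilon]\bigr)\subseteq D_{i}(\alpha_{0}+2\varepsilon,\beta_{0}-2\varepsilon)=D_{i}(\alpha,\beta),
\end{equation*}
so I can define $V_{i}$ to be the unique connected component of $D_{i}(\alpha,\beta)$ containing this connected set.

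The only remaining step is to prove $V_{i}\subseteq \phi_{i}(E[\alpha_{0},\beta_{0}])$, and this is the main (and only) technical point; it is handled exactly as in Claim~\ref{cc}. Namely, if some $y\in V_{i}$ lay outside $\phi_{i}(E(\alpha_{0},\beta_{0}))$, choose a curve $\gamma\colon [0,1]\to V_{i}$ joining a point of $\phi_{i}(E[\alpha_{0}+3\varepsilon,\beta_{0}-3\varepsilon])$ to $y$, and let $t_{\ast}\in(0,1]$ be the first time at which $\gamma$ exits the open set $\phi_{i}(E(\alpha_{0},\beta_{0}))$. Pull $\gamma|_{[0,t_{\ast})}$ back by $\phi_{i}^{-1}$ to a curve $\eta$ in $E(\alpha_{0},\beta_{0})$; since $|\phi_{i}^{\ast}\varrho_{i}-\varrho|<\varepsilon$ and $\gamma$ stays in $D_{i}(\alpha,\beta)$, the curve $\eta$ is forced to lie in the compact subset $E[\alpha_{0}+\varepsilon,\beta_{0}-\varepsilon]$. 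A subsequential limit $z$ of $\eta(t_{j})$ then satisfies $\gamma(t_{\ast})=\phi_{i}(z)\in \phi_{i}(E(\alpha_{0},\beta_{0}))$ by continuity, contradicting the choice of $t_{\ast}$. The whole argument is therefore structurally identical to the proof of Claim~\ref{cc}; no substantial obstacle arises beyond arranging the numerical constraints on $\varepsilon$ in the first step.
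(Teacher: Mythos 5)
Your approach matches the paper's exactly: repeat the argument of Claim~\ref{cc} with $a,b$ replaced by $\alpha_0,\beta_0$ and add bookkeeping to force $(\alpha,\beta)\subseteq(s,\infty)$ and $\beta>3\alpha$. The structure of the argument is fine, but the explicit constraint you place on $\varepsilon$ does not actually yield $\beta>3\alpha$. With $\alpha=\alpha_0+2\varepsilon$ and $\beta=\beta_0-2\varepsilon$ one has $\beta-3\alpha=(\beta_0-3\alpha_0)-8\varepsilon$, so the requirement is $\varepsilon<(\beta_0-3\alpha_0)/8$. Your condition $\varepsilon<\tfrac14(\beta_0-3\alpha_0-1)$ gives only $8\varepsilon<2(\beta_0-3\alpha_0)-2$, which is strictly weaker than $8\varepsilon<\beta_0-3\alpha_0$ whenever $\beta_0-3\alpha_0>2$; since the hypothesis $\beta_0>3\alpha_0+1$ gives a lower bound but no upper bound on $\beta_0-3\alpha_0$, the sentence ``one immediately checks $\beta>3\alpha$'' fails in general. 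For instance, take $s=1$, $\alpha_0=100$, $\beta_0=303$, $\varepsilon=0.45$: all three of your constraints are satisfied, yet $\alpha=100.9$, $\beta=302.1$, and $3\alpha=302.7>\beta$. The fix is of course trivial: replace (or supplement) your bound by $\varepsilon<(\beta_0-3\alpha_0)/8$. Two smaller remarks: the constraint $\varepsilon<\tfrac14(\alpha_0-s)$ is unnecessary, since $\alpha=\alpha_0+2\varepsilon>\alpha_0>s$ automatically; and to ensure $E[\alpha_0+3\varepsilon,\beta_0-3\varepsilon]\neq\emptyset$ you actually want $6\varepsilon<\beta_0-\alpha_0$, slightly stronger than the $\tfrac14$ factor inherited from Claim~\ref{cc}. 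None of this affects the substance of the proof, which is otherwise correct.
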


Now define $\Omega_{i}=\phi_{i}(\Omega)\subseteq M_{i}$ for every $i\geq i_{\ast}$.

\begin{claim}\label{secondo}  There exists $i_{\ast\ast}\geq i_{\ast}$ such that every $\Omega_{i}$ with $i\geq i_{\ast\ast}$ is $C^{2}$-near to the conical annulus $\mathcal{C}_{\Sigma}(a,b)$ and contains a connected component $V_{i}$ of $D_{i}(\alpha,\beta)$. 
\begin{proof}
Since the inclusion $V_{i}\subseteq \Omega_{i}$ follows from Claim \ref{primo} it is sufficient to prove the closeness condition. As we are working with various metrics, we will write them explicitly in order to avoid any confusion. In the following, $C$ will denote a positive constant depending on $n$ and $\ell=0,1,2$ and, as always, it can vary from line to line. Let $\delta \in (0,\frac{1}{2})$ to be determined later. Because the convergence is smooth on $\reg$, there exists $i_{\ast\ast}\geq i_{\ast}$ such that for every $i\geq i_{\ast\ast}$ we have 
\begin{equation}\label{stimacondelta}
|\nabla^{\ell}_{g}(\phi_{i}^{\ast}g_{i}-g)|_{g}\leq \delta
\end{equation}
on $\Omega$. Define the diffeomorphisms 
\begin{equation*}
\varphi_{i}=\phi_{i}\vert_{\Omega}\circ \varphi\vert_{\mathcal{C}_{\Sigma}(a,b)}\colon \mathcal{C}_{\Sigma}(a,b)\to \Omega_{i}.
\end{equation*}
Working on the conical annulus $\mathcal{C}_{\Sigma}(a,b)$ we get
\begin{align*}
|\nabla^{\ell}_{g_{c}}(\varphi_{i}^{\ast}g_{i}-g_{c})|_{g_{c}} & \leq |\nabla^{\ell}_{g_{c}}(\varphi_{i}^{\ast}g_{i}-\varphi^{\ast}g)|_{g_{c}} + |\nabla^{\ell}_{g_{c}}(\varphi^{\ast}g-g_{c})|_{g_{c}} \\
& \leq  |\nabla^{\ell}_{g_{c}}(\varphi_{i}^{\ast}g_{i}-\varphi^{\ast}g)|_{g_{c}} + \varepsilon/a^{\ell},
\end{align*}
where we used \eqref{closenesslimitend} in the last inequality. Untangling the definition of the maps $\phi_{i}$, we have
\begin{align*}
\nabla^{\ell}_{g_{c}}(\varphi_{i}^{\ast}g_{i}-\varphi^{\ast}g) = \nabla^{\ell}_{g_{c}}(\varphi^{\ast}(\phi_{i}^{\ast}g_{i}-g))=
\varphi^{\ast}\big(\nabla^{\ell}_{\varphi_{\ast}g_{c}}(\phi_{i}^{\ast}g_{i}-g)\big)
\end{align*}
and therefore, using Proposition \ref{nearmetrics},
\begin{align*}
|\nabla^{\ell}_{g_{c}}(\varphi_{i}^{\ast}g_{i}-\varphi^{\ast}g)|_{g_{c}}=|\nabla^{\ell}_{\varphi_{\ast}g_{c}}(\phi_{i}^{\ast}g_{i}-g)|_{\varphi_{\ast}g_{c}} \leq C |\nabla^{\ell}_{\varphi_{\ast}g_{c}}(\phi_{i}^{\ast}g_{i}-g)|_{g}
\end{align*}
Now, working on $\Omega\subseteq \reg$ we get 
\begin{align*}
|\nabla^{\ell}_{\varphi_{\ast}g_{c}}(\phi_{i}^{\ast}g_{i}-g)|_{g} \leq 
|\nabla^{\ell}_{\varphi_{\ast}g_{c}}(\phi_{i}^{\ast}g_{i}-g)-\nabla^{\ell}_{g}(\phi_{i}^{\ast}g_{i}-g)|_{g}+ |\nabla^{\ell}_{g}(\phi_{i}^{\ast}g_{i}-g)|_{g} \leq C\delta \sum_{j=1}^{\ell} \varepsilon/a^{j} +\delta,
\end{align*}
where we used Proposition \ref{differenzatensoremetriche} and \eqref{stimacondelta}. Finally, putting everything together and choosing $\delta$ small enough, we find
\begin{equation*}
|\nabla^{\ell}_{g_{c}}(\varphi_{i}^{\ast}-g_{c})|_{g_{c}}\leq C\delta \sum_{j=1}^{\ell} \varepsilon/a^{j} +\delta +\varepsilon/a^{\ell} \leq 3\varepsilon/a^{\ell}
\end{equation*}
and we are done.
\end{proof}
\end{claim}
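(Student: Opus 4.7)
The plan is to prove Claim \ref{secondo}, namely that for $i$ sufficiently large the maps $\varphi_i := \phi_i \circ \varphi : \mathcal{C}_{\Sigma}(a,b) \to \Omega_i$ make $\Omega_i$ a $C^2$-nearly conical region in the sense of Definition \ref{nearannulus}. The inclusion of $V_i$ into $\Omega_i$ is essentially free: it was already shown in Claim \ref{primo} that $V_i \subseteq \phi_i(E[\alpha_0,\beta_0]) \subseteq \phi_i(\Omega) = \Omega_i$, so all the work lies in the closeness estimate.

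To estimate $|\nabla^{\ell}_{g_c}(\varphi_i^{\ast} g_i - g_c)|_{g_c}$ for $\ell = 0,1,2$, I would split the error via the triangle inequality by introducing the reference tensor $\varphi^{\ast}g$:
\begin{equation*}
|\nabla^{\ell}_{g_c}(\varphi_i^{\ast} g_i - g_c)|_{g_c} \leq |\nabla^{\ell}_{g_c}(\varphi_i^{\ast} g_i - \varphi^{\ast} g)|_{g_c} + |\nabla^{\ell}_{g_c}(\varphi^{\ast} g - g_c)|_{g_c}.
\end{equation*}
The second summand is already controlled by the asymptotically conical hypothesis \eqref{closenesslimitend}, giving a bound of $\varepsilon/a^{\ell}$. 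The first summand is the term that must be handled using the smooth convergence of the sequence.

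For the first summand, I would transport the computation from the cone to $\Omega \subseteq \reg$ using $\varphi$: since $\varphi$ is a diffeomorphism, one has $\nabla^{\ell}_{g_c}(\varphi_i^{\ast} g_i - \varphi^{\ast} g) = \varphi^{\ast} \bigl(\nabla^{\ell}_{\varphi_{\ast} g_c}(\phi_i^{\ast} g_i - g)\bigr)$, and hence the $g_c$-norm on the left equals the $\varphi_{\ast} g_c$-norm on the right. I would then make two further comparisons on $\Omega$, both of which are covered by the appendix. First, Proposition \ref{nearmetrics} allows replacing the pointwise $\varphi_{\ast} g_c$-norm by the $g$-norm, since $\varphi_{\ast} g_c$ and $g$ are $\varepsilon$-close on $\Omega$ by \eqref{closenesslimitend}. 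Second, Proposition \ref{differenzatensoremetriche} lets me compare the covariant derivatives $\nabla^{\ell}_{\varphi_{\ast} g_c}$ and $\nabla^{\ell}_g$ acting on $\phi_i^{\ast} g_i - g$; the difference is a universal polynomial in $\phi_i^{\ast} g_i - g$ and in the differences of the Christoffel symbols of $\varphi_{\ast} g_c$ and $g$, the latter being built from $\nabla^{j}(\varphi_{\ast}g_c - g)$ for $1 \leq j \leq \ell$, each bounded on $\Omega$ by $\varepsilon/a^{j}$.

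Combining these ingredients with the smoothness of convergence on the compactly contained set $\Omega \Subset \reg$, one obtains $i_{\ast\ast} \geq i_{\ast}$ such that $|\nabla^{\ell}_g(\phi_i^{\ast} g_i - g)|_g \leq \delta$ on $\Omega$ for $\ell = 0,1,2$ and all $i \geq i_{\ast\ast}$, yielding an overall bound of the form
\begin{equation*}
|\nabla^{\ell}_{g_c}(\varphi_i^{\ast} g_i - g_c)|_{g_c} \leq C \delta \sum_{j=1}^{\ell} \varepsilon / a^{j} + \delta + \varepsilon / a^{\ell}.
\end{equation*}
A final choice of $\delta = \delta(a,\varepsilon,n)$ small makes this at most $3\varepsilon / a^{\ell}$, which fits Definition \ref{nearannulus} (after replacing $\varepsilon$ with $3\varepsilon$, still $< 1/2$ by the choice $\varepsilon \in (0,1/10)$). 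The main obstacle is purely bookkeeping: tracking how norms and connections transform under $\varphi$ and under the metric perturbation simultaneously, without confusing which metric is used at each step. Once the triangle-inequality framework is fixed and the two appendix propositions are applied in the correct order, the estimate is essentially a short chain of inequalities and a single smallness choice of $\delta$.
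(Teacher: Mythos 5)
Your proposal follows the paper's proof exactly: the same triangle-inequality decomposition introducing $\varphi^{\ast}g$, the same pull-back identity $\nabla^{\ell}_{g_c}(\varphi^{\ast}(\phi_i^{\ast}g_i - g)) = \varphi^{\ast}(\nabla^{\ell}_{\varphi_{\ast}g_c}(\phi_i^{\ast}g_i - g))$, and the same two-stage application of Propositions \ref{nearmetrics} and \ref{differenzatensoremetriche} to trade norms and connections on $\Omega$, followed by the smallness choice of $\delta$. It is correct and takes essentially the same approach as the paper.
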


Now every $\Omega_{i}\subseteq M_{i}$ satisfies the hypotheses of Theorem \ref{thm.conicalannulus} and therefore there exists a unique end $\e_{i} \in \mathcal{E}(M_{i})$ such that $V_{i}\subseteq \e_{i}(D[0,\alpha])$. Moreover $\e_{i}$ is asymptotically conical and $\nabla f_{i}\neq 0$ on $\e_{i}(D[0,\alpha])$. At this point we can construct the maps $\lambda_{i}$. First of all, note that in the above arguments $a,b,\alpha,\beta, i_{\ast\ast}$ do not depend on $\e$, and therefore Claim \ref{primo} and Claim \ref{secondo} hold for every conical end of $\X$ with the same constants $a,b,\alpha,\beta, i_{\ast\ast}$. 
Therefore we get a map $\lambda_{i}\colon \mathcal{E}_{c}(\X)\to \mathcal{E}_{c}(M_{i})$.

It is easy to see that every $\lambda_{i}$ is injective. Indeed take $\e\neq \widetilde{\e}\in \mathcal{E}_{c}(\X)$. As seen above, we have $E\cap \widetilde{E}=\emptyset$ and therefore $\Omega\cap \widetilde{\Omega}=\emptyset$. It follows that $\Omega_{i}$ and $\widetilde{\Omega}_{i}$ are disjoint for every $i$. In particular the connected components of $D_{i}(\alpha,\beta)$ they contain are disjoint and we denote them by $V_{i}$ and $\widetilde{V}_{i}$, respectively. Suppose by contradiction $\e_{i}=\widetilde{\e}_{i}$ and set $E_{i}= \e_{i}(D[0,\alpha])=\widetilde{\e}_{i}(D[0,\alpha])$. Then by Theorem \ref{thm.conicalannulus} we have $V_{i},\widetilde{V}_{i}\subseteq E_{i}$, but this is impossible, since $E_{i}$ does not contain any critical point of $f$ and hence one can apply Proposition \ref{prop.ends-shrinker} to find a contradiction.

As $\lambda_{i}$ is injective, it follows that $\#\mathcal{E}_{c}(M)\leq \#\mathcal{E}_{c}(M_{i})$ for $i$ large and 
therefore 
\begin{equation*}
\#\mathcal{E}_{c}(M)\leq \liminf_{i\to \infty} \#\mathcal{E}_{c}(M_{i}).
\end{equation*}

If $\mathcal{E}_{c}(\X)$ is not finite, one can argue exactly as in the proof of Proposition \ref{thm.limitendbound}, obtaining
\begin{equation*}
\#\mathcal{E}_{c}(\X)=\infty=\liminf_{i\to \infty}\#\mathcal{E}_{c}(M_{i}).
\end{equation*}
This concludes the proof.
\end{proof}

\section{Further applications to moduli spaces}\label{conicalends-applications-section}

As in Section \ref{applications-convergence}, Theorem \ref{thm.limitconicalbound} can be used in combination with Proposition \ref{LLWcompactness} to obtain some results about moduli spaces of Ricci shrinkers. Here, we first denote by $\m[n,\underline{\mu}]$ the moduli space of $n$-dimensional Ricci shrinkers with basepoint $p \in M$ and entropy bounded below 
$\mu\geq \underline{\mu}$. We also denote by $\m[n,\underline{\mu},E(r)]$ the subspace of Ricci shrinkers in 
$\m[n,\underline{\mu}]$ that additionally satisfy the energy condition \eqref{energycondition} if $n \geq 5$. 

Given a subspace $\m \subseteq \m[n,\underline{\mu}]$, we do not know if the number of (conical) ends is uniformly bounded, but it is nevertheless possible to consider the moduli subspace given by those Ricci shrinkers in $\m$ with \emph{exactly $N$ asymptotically conical ends}, where $N\in \N$ is arbitrary. These subspaces are denoted by $\m^{c}_{N}$. We can write
\begin{equation*}
\m=\bigsqcup_{N\in \N}\m^{c}_{N},
\end{equation*}
and therefore it is natural to study $\m^{c}_{N}$. A direct application of Theorem \ref{thm.limitconicalbound} and Proposition \ref{LLWcompactness} gives the following result.

\begin{cor} 
The boundary points of the moduli space $\m^{c}_{N}[n,\underline{\mu}]$ are given by limit singular Ricci shrinkers with at most $N$ asymptotically conical ends and the boundary points of its subspace $\m^{c}_{N}[n,\underline{\mu},E(r)]$ are given by limit orbifold Ricci shrinkers with at most $N$ asymptotically conical ends.
\end{cor}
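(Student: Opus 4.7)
The plan is to directly combine Proposition \ref{LLWcompactness} and Theorem \ref{thm.limitconicalbound}. Let $(M_{i},g_{i},f_{i},p_{i})_{i\in \N}$ be a sequence in $\m^{c}_{N}[n,b.c.,\underline{\mu}]$, so that each $M_{i}$ has bounded curvature, entropy lower bound $\mu_{i}\geq \underline{\mu}$, and exactly $N$ asymptotically conical ends. The uniform entropy bound is precisely the hypothesis of Proposition \ref{LLWcompactness}, so after passing to a subsequence we obtain convergence in the pointed singular Cheeger-Gromov sense to an $n$-dimensional singular Ricci shrinker $\X$. Since each $M_{i}$ has bounded curvature by the very definition of the moduli space, the hypotheses of Theorem \ref{thm.limitconicalbound} are satisfied, and applying that theorem to the (sub)sequence yields
\begin{equation*}
\#\mathcal{E}_{c}(\X)\leq \liminf_{i\to \infty}\#\mathcal{E}_{c}(M_{i})=N,
\end{equation*}
which is the desired estimate on the number of conical ends of the limit.

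For the subspace $\m^{c}_{N}[n,b.c.,\underline{\mu},E(r)]$, the additional local energy condition \eqref{energycondition} (for $n\geq 5$, and automatic for $n=4$) is exactly what the second part of Proposition \ref{LLWcompactness} requires in order to upgrade the limit $\X$ from a general singular Ricci shrinker to an orbifold Ricci shrinker, with $\s$ discrete and $f$ critical along $\s$. The bound $\#\mathcal{E}_{c}(\X)\leq N$ on the number of asymptotically conical ends of this orbifold limit is then obtained exactly as above via Theorem \ref{thm.limitconicalbound}, since the hypotheses of that theorem depend only on bounded curvature of the sequence elements, not on any uniform bound.

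There is no real obstacle in this argument: it is a routine verification. The one subtle point worth noting is that ``bounded curvature'' in the definition of $\m[n,b.c.,\underline{\mu}]$ means that each $M_{i}$ individually satisfies $\sup_{M_{i}}|\Rm|<\infty$, with no uniformity required across the sequence. This matches precisely the hypothesis of Theorem \ref{thm.limitconicalbound}, which in turn relies on Theorem \ref{thm.conicalannulus} being applied element-by-element (so the constant $a_{\ast}$ there need not be uniform in $i$, only in $n$ and $\Sigma$). Hence no further argument beyond invoking the two quoted results is needed.
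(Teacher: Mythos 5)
Your proposal is correct and follows exactly the route of the paper, which states that the corollary is a direct application of Proposition \ref{LLWcompactness} together with Theorem \ref{thm.limitconicalbound}. Your added observation that the per-shrinker (rather than uniform) curvature bound is precisely what Theorem \ref{thm.limitconicalbound} requires matches the paper's own remark on the meaning of the bounded curvature condition in this moduli space.
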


Under additional curvature conditions, it is possible to derive some proper compactness results. To this end, if $\m$ is a moduli space, we denote by $\m^{c}_{\leq N}$ the subspace given by those Ricci shrinkers in $\m$ with \emph{at most $N$ asymptotically conical ends}. Clearly we have
\begin{equation*}
\m=\bigcup_{N\in \N}\m^{c}_{\leq N}.
\end{equation*}

We first consider the moduli space $\m[n,\underline{\mu},\underline{K}]$ of smooth Ricci shrinkers (of dimension $n$ and with entropy bounded below) with curvature uniformly bounded below $\Rm \geq \underline{K}$. We have the following result.

\begin{cor} 
The moduli space $\m^c_{\leq N}[n,\underline{\mu},\underline{K}]$ is compact with respect to the smooth pointed Cheeger-Gromov convergence.
\end{cor}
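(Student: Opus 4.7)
The plan is to mimic the arguments of Corollary \ref{cor3} while invoking Theorem \ref{thm.limitconicalbound} in place of Theorem \ref{thm.limitendbound}. Start with a sequence $(M_{i},g_{i},f_{i},p_{i})_{i \in \N}$ in $\m^c_{\leq N}[n,\underline{\mu},\underline{K}]$; by assumption each element satisfies $\mu(g_{i})\geq \underline{\mu}$, $\Rm(g_{i})\geq \underline{K}$ and $\#\mathcal{E}_c(M_i) \leq N$. Since the lower curvature bound implies the energy condition \eqref{energycondition} (see the remark following Proposition \ref{LLWcompactness}), the full strength of Proposition \ref{LLWcompactness} applies, yielding a subsequence converging in the pointed smooth Cheeger-Gromov sense to a smooth Ricci shrinker $(M,g,f,p)$.

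Next I would verify that all the constraints defining the moduli space are inherited by the limit. The entropy bound $\mu(g)\geq \underline{\mu}$ passes to the limit (this is built into the $C^{\infty}_{\mathrm{loc}}$ convergence, compare Proposition \ref{prop.limitsingularRS}), and the lower curvature bound $\Rm(g)\geq \underline{K}$ passes to the limit pointwise by smooth convergence of the curvature tensors on the exhausting sequence $(U_i)$.

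The key remaining step is to bound the number of asymptotically conical ends of $M$. For this I would apply Theorem \ref{thm.limitconicalbound}, obtaining
\begin{equation*}
\#\mathcal{E}_{c}(M)\leq \liminf_{i\to \infty}\#\mathcal{E}_{c}(M_{i})\leq N,
\end{equation*}
so that $M \in \m^c_{\leq N}[n,\underline{\mu},\underline{K}]$, completing the argument. The main obstacle is checking the hypothesis of Theorem \ref{thm.limitconicalbound} that each $M_{i}$ has bounded curvature. This is where one must use the fact that a smooth Ricci shrinker with $\Rm \geq \underline{K}$ and the lower entropy bound admits a global curvature bound (one may argue using the shrinker equation $\Ric+\nabla^{2}f=\tfrac{1}{2}g$ together with the Lipschitz bound \eqref{eq.rhoLipschitz} and the standard curvature estimates on shrinkers — alternatively, since the sequence converges smoothly, the required local curvature control already suffices to run the proof of Theorem \ref{thm.limitconicalbound} verbatim on compact pieces, because the nearly conical regions $\Omega \Subset \reg$ produced there sit in a compact set where curvature is automatically bounded on late $M_i$).
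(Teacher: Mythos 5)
Your argument matches the paper's proof exactly in structure: invoke the smooth part of Proposition~\ref{LLWcompactness} (lower curvature bound $\Rightarrow$ energy condition $\Rightarrow$ smooth pointed Cheeger--Gromov subsequential limit), check that the defining constraints $\mu\geq\underline{\mu}$ and $\Rm\geq\underline{K}$ pass to the smooth limit, and then apply Theorem~\ref{thm.limitconicalbound} to conclude $\#\mathcal{E}_c(M)\leq N$. That is precisely what the paper does, and the first two-thirds of your write-up is fine.

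Your final paragraph correctly notices something the paper glosses over: Theorem~\ref{thm.limitconicalbound} is stated for sequences of shrinkers \emph{with bounded curvature}, because its proof invokes Theorem~\ref{thm.conicalannulus}, which in turn uses pseudo-locality (Proposition~\ref{pseudolocality}) and hence needs a complete bounded-curvature Ricci flow on all of $M_i$. Membership in $\m[n,\underline{\mu},\underline{K}]$ only imposes $\Rm\geq\underline{K}$, and this does \emph{not} by itself yield $\sup_{M_i}|\Rm|<\infty$: from $c(g)=S+|\nabla f|^2-f$ and $S\geq 0$ one only gets $S\leq f+c(g)=\varrho^2/4$, and combining this with the lower bound on the curvature operator bounds $|\Rm|$ only up to quadratic growth in $d(\cdot,p)$, not uniformly. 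So your first suggested fix (shrinker equation plus \eqref{eq.rhoLipschitz}) does not close the gap. Your second suggested fix (work only on the compact pieces $\Omega\Subset\reg$) also does not suffice as written, because Theorem~\ref{thm.conicalannulus} is applied to the whole of $M_i$, not just to the region $\Omega_i$: pseudo-locality is used along the entire flow line of $\nabla f_i$ escaping to infinity in $M_i$, so curvature control is needed far beyond the image of $\Omega$. The honest resolution is either to add the bounded-curvature hypothesis to the moduli space (as the paper does for $\m[n,b.c.,\underline{\mu}]$), or to supply a separate argument that $\Rm\geq\underline{K}$ plus the entropy bound forces $\sup|\Rm|<\infty$ on each $M_i$; neither of these is actually carried out in the paper's own one-line proof, so this is a shared imprecision rather than an error introduced by you.
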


\begin{proof}
Given a sequence $(M_{i},g_{i},f_{i},p_{i})_{i \in \N}$ in $\m^c_{\leq N}[n,\underline{\mu},\underline{K}]$, thanks to Proposition \ref{LLWcompactness} we have convergence, up to a subsequence, to a smooth Ricci shrinker $(M,g,f,p)$. 
Since the convergence is smooth, $M$ has entropy and curvature bounded below by $\underline{\mu}$ and $\underline{K}$, respectively. We can then use Theorem \ref{thm.limitconicalbound} to conclude that $\#\E_{c}(M)\leq N$, and therefore the limit lies again in 
$\m^c_{\leq N}[n,\underline{\mu},\underline{K}]$.
\end{proof}

Finally, if $n=4$, we use again $\m[4,\underline{\mu}, A,B]$ to denote the moduli space of $4$-dimensional Ricci shrinkers with entropy lower bound, bounded scalar curvature $S\leq A$ and $|\Rm|\leq B$ on $D[0,r_{1})$, where $r_{1}>0$ is as in Theorem \ref{mw-primo} (in particular depending on $A$), as already considered in Section \ref{applications-convergence}. Remember that Theorem \ref{mw-primo} yields a uniform global bound $|\Rm|\leq C$ on $M$ for every element of this moduli space, where $C=C(A,B)>0$. We therefore find the following corollary.
 
\begin{cor} 
The moduli space $\m^c_{\leq N}[4,\underline{\mu}, A,B]$ is compact with respect to the smooth pointed Cheeger-Gromov convergence.
\end{cor}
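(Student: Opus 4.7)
The plan is to mimic the argument used for the preceding corollary $\m^{c}_{\leq N}[n,\underline{\mu},\underline{K}]$, with the additional ingredient of Proposition \ref{mw-primo}, which allows one to convert the assumed local curvature bound $|\Rm|\leq B$ on $D[0,r_{1})$ into a uniform global curvature bound on each shrinker in the moduli space. Crucially, the constant $r_{1}$ from Proposition \ref{mw-primo} depends only on $A$, so it is the same for every shrinker in $\m^{c}_{\leq N}[4,\underline{\mu},A,B]$, and the resulting bound $\sup_{M}|\Rm|\leq C(A,B)$ is likewise uniform across the moduli space.

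First I would take a sequence $(M_{i},g_{i},f_{i},p_{i})_{i\in\N}$ in $\m^{c}_{\leq N}[4,\underline{\mu},A,B]$ and apply Proposition \ref{mw-primo} element by element, obtaining a uniform bound $|\Rm(g_{i})|\leq C$ with $C=C(A,B)$ independent of $i$. In particular this yields a uniform lower curvature bound $\Rm(g_{i})\geq -C$, so the smooth compactness statement in Proposition \ref{LLWcompactness} applies and, up to a subsequence, we obtain smooth pointed Cheeger-Gromov convergence to a smooth four-dimensional Ricci shrinker $(M,g,f,p)$.

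Next I would verify that the limit belongs to $\m[4,\underline{\mu},A,B]$: the entropy lower bound $\mu(g)\geq \underline{\mu}$, the global scalar curvature bound $S\leq A$, and the local curvature bound $|\Rm|\leq B$ on $D[0,r_{1})$ all pass to the limit by smooth convergence on compact subsets (together with the fact that $r_{1}$ is determined solely by $A$). Finally, since the sequence has uniformly bounded curvature, the hypothesis of Theorem \ref{thm.limitconicalbound} is satisfied and we deduce
\[
\#\mathcal{E}_{c}(M)\leq \liminf_{i\to\infty}\#\mathcal{E}_{c}(M_{i})\leq N,
\]
so that $M\in \m^{c}_{\leq N}[4,\underline{\mu},A,B]$ and the moduli space is closed under subsequential smooth pointed Cheeger-Gromov limits.

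I do not anticipate any serious obstacle: the argument is essentially a concatenation of Proposition \ref{mw-primo}, Proposition \ref{LLWcompactness}, and Theorem \ref{thm.limitconicalbound}. The only point worth double-checking is the uniformity of the constants $r_{1}$ and $C$ across the sequence, which is built into the statement of Proposition \ref{mw-primo}; with this in hand, the proof proceeds exactly as in the previous corollary.
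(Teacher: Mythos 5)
Your proof is correct and follows the same approach as the paper: use Proposition \ref{mw-primo} to upgrade the assumed local curvature bound $|\Rm|\leq B$ on $D[0,r_1)$ to a uniform global bound $|\Rm|\leq C(A,B)$, then invoke the smooth compactness statement in Proposition \ref{LLWcompactness} and Theorem \ref{thm.limitconicalbound} exactly as in the preceding corollary. Your verification that the constraints on entropy, scalar curvature, local curvature, and number of conical ends pass to the limit is routine and correct, so nothing is missing.
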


\appendix
\section{Proofs of the Propositions from Section \ref{sectionends}}\label{appendixends}

\begin{proof}[Proof of Proposition \ref{prop.ends-general}]\
\begin{enumerate}
\item By definition, $\e(K)$ is a connected component of $X\setminus K$. Suppose towards a contradiction that $\overline{\e(K)}$ is compact. Then $H=K\cup \overline{\e(K)}$ is compact and $\e(H)\subseteq \e(K)\subseteq H$, 
which is impossible since $\e(H)\subseteq X\setminus H$.

\item We start by proving the following general fact.
\smallskip

\begin{claim}\label{relcptcc} Let $K\in \mathcal{K}_{X}$ and consider a collection $\mathcal{A}$ of relatively compact connected components of $X\setminus K$. Then $\bigcup_{A\in \mathcal{A}}A$ is relatively compact. 
\end{claim}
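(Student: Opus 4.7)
The plan is to reduce the a priori uncountable union $U=\bigcup_{A\in \mathcal{A}}A$ to a finite list of compact pieces by localising near $K$. First I would use the local compactness of $X$ to produce an open set $V$ with $K\subseteq V$ and $\overline{V}$ compact. The topological boundary $\partial V = \overline{V}\setminus V$ is then a compact subset of $X\setminus K$. Since in the geometric setting of interest $X$ is locally connected (the applications of Proposition \ref{prop.ends-general} are to length spaces), the connected components of the open set $X\setminus K$ are themselves open and so form an open cover of $\partial V$. By compactness, only finitely many components $C_{1},\ldots,C_{m}$ of $X\setminus K$ can meet $\partial V$.

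The key step is to show that every relatively compact connected component $A$ of $X\setminus K$ that is not among $C_{1},\ldots,C_{m}$ must satisfy $A\subseteq V$. Indeed $A$ is connected and disjoint from $\partial V$, so $A\subseteq V$ or $A\subseteq X\setminus \overline{V}$. In the second case, relative compactness of $A$ (together with $A$ being closed in $X\setminus K$) gives $\overline{A}\setminus A\subseteq K\subseteq V$, while simultaneously $\overline{A}\subseteq X\setminus V$; hence $\overline{A}=A$, so $A$ would be both open and closed in $X$. Assuming $K\neq \emptyset$ (the case $K=\emptyset$ is trivial since then $X\setminus K$ has a single component, and the statement holds vacuously or tautologically), $A$ would then be a proper nonempty clopen subset of $X$, contradicting the connectedness of $X$. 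Therefore $A\subseteq V$.

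Combining these two observations, $U\subseteq V\cup \bigcup_{i:\, C_{i}\in \mathcal{A}} C_{i}$, and taking closures gives $\overline{U}\subseteq \overline{V}\cup \bigcup_{i:\, C_{i}\in \mathcal{A}}\overline{C_{i}}$, a finite union of compact sets; hence $\overline{U}$ is compact. The main obstacle is the dichotomy step, namely ruling out the possibility that a relatively compact component $A$ sits far out in $X\setminus \overline{V}$ without ever touching $\partial V$: a priori there is no metric control preventing this, and it is precisely the compactness of $\overline{A}$ combined with connectedness of $X$ that eliminates this scenario. A minor technical remark is that the argument uses local connectedness of $X$ so that components of $X\setminus K$ are open; this is automatic in all the geometric applications considered in the paper, and can otherwise be added to the standing hypotheses of Section \ref{sectionends}.
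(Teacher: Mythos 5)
Your proof is correct and follows essentially the same route as the paper's: pick a relatively compact open neighbourhood $V$ of $K$, use compactness of $\partial V$ together with disjointness of components to see that only finitely many components meet $\partial V$, and use connectedness of $X$ to rule out relatively compact components sitting entirely in $X\setminus\overline V$. You are slightly more careful than the paper on two points (the explicit appeal to local connectedness so that components of $X\setminus K$ are open, and the degenerate case $K=\emptyset$), but the underlying argument is identical.
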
 
\begin{proof}
Let $U\subseteq X$ be a relatively compact open subset containing $K$. Then $U$ intersects every $A\in \mathcal{A}$. Indeed suppose by contradiction that $A\cap U=\emptyset$ for some $A\in \mathcal{A}$. From $A\subseteq X\setminus U$ follows $\overline{A}\subseteq X\setminus U$ and therefore $\overline{A}=A$, since $\overline{A}$ is connected and contained in $X\setminus K$. Hence $A$ is open and closed in $X$, in contradiction to the connectedness of $X$. Now if $A\in \mathcal{A}$ then $A\subseteq U$ or $A\cap \partial U\neq \emptyset$. Indeed, if $A\nsubseteq U$ but $A\cap \partial U=\emptyset$ we could write $A=(A\cap U)\cup (A\cap (X\setminus \overline{U}))$, that is as the union of two disjoint open subset, which is impossible being $A$ connected. At this point we have $\bigcup_{A\in \mathcal{A}}A\subseteq U\cup \bigcup_{A\in \mathcal{A}_{\ast}}A$, where $\mathcal{A}_{\ast}=\{A\in \mathcal{A}\colon A\cap \partial U\neq \emptyset\}$. Observe that since $\{A\cap \partial U\colon A\in \mathcal{A}_{\ast}\}$ is a cover of the compact set $\partial U$ formed by pairwise disjoint open subsets it must be finite. In other words $\mathcal{A}_{\ast}$ is finite and we conclude, being $U\cup \bigcup_{A\in \mathcal{A}_{\ast}}A$ relatively compact.
\end{proof}

Returning to the main statement, suppose towards a contradiction that there is no end relative to $K$, that is, every connected component of $X\setminus K$ is relatively compact. It follows by Claim \ref{relcptcc} that $X\setminus K$ is relatively compact too, and therefore $X=K\cup (X\setminus K)$ is compact.

\item Let $E$ be an end relative to $K$. Since $E$ is not relatively compact $E\setminus H\neq \emptyset$ and
the set $\mathcal{A}=\{A\in \pi_{0}(X\setminus H)\colon A\subseteq E\}$ is not empty. Moreover $E\subseteq H\cup \left(\cup_{A\in \mathcal{A}}A\right)$. If all the elements in $\mathcal{A}$ were relatively compact then $E$ would be relatively compact too by Claim \ref{relcptcc}, a contradiction. Therefore there exists some element in $\mathcal{A}$ with non-compact closure, thus $E$ contains an end relative to $H$.
 
\item We start proving injectivity. Let $\e,\f \in \E(X)$ be such that $\e(K_{i})=\f(K_{i})$ for every $i\in \N$, and suppose by contradiction that $\e\neq \f$. Then there exists $K$ compact such that $\e(K)\cap \f(K)=\emptyset$. For $i$ large, we have $K\subseteq K_{i}$ and therefore $(\e(K_{i})\cap \f(K_{i}))\subseteq (\e(K)\cap \f(K))=\emptyset$, which is impossible. To show surjectivity, consider $(U_{i})_{i \in \N}\in \Pi_{i \in \N} \pi_{0}(X\setminus K_{i})$ such that $U_{i+1}\subseteq U_{i}$ for every $i\in \N$. We define a function $\e \colon \mathcal{K}_{X}\to \mathcal{P}(X)$ as follows. If $K=K_i$ set $\e(K)=U_{i}$. Given a general $K\in \mathcal{K}_{X}$ fix an index $j$ such that $K\subseteq K_{j}$ and define $\e(K)$ as the unique connected component of $X\setminus K$ that contains $\e(K_{j})$. This function $\e$ is well defined (i.e. $\e(K)$ does not depend on the choice of $K_{j}$) and is an end. Indeed $\e(H)\subseteq \e(K)$ if $K\subseteq H$ and arguing as above one obtains that every $\e(K)$ is not relatively compact.
 
\item Consider an exhaustion $\{K_{i}\}_{i\in \N}$ of $X$ with $K_{0}=K$. We define $\e\in \mathcal{E}(X)$ recursively as follows: $\e(K_{0})=E$ while
$\e(K_{i+1})$ is an end relative to $K_{i+1}$ that is contained in $\e(K_{i})$; here we used that $\E(K_{i+1},X)\to \E(K_{i},X)$ is onto.
This proves the existence statement. If $E$ satisfies the additional hypothesis it is straightforward to conclude that the end $\e$ constructed above is unique.\qedhere
\end{enumerate}
 \end{proof}

\begin{proof}[Proof of Proposition \ref{numerofinitofini}]
By Point 3 of Proposition \ref{prop.ends-general}, if $K\subseteq H$, the natural map $ \mathcal{E}(H,X)\to \mathcal{E}(H,X)$ is surjective, and therefore $n(K)\leq n(H)$. If $n(\cdot)$ is bounded then it has a maximum being integer valued. It only remains to prove the final identity. Denote 
\begin{equation*}
n^{\ast}:=\sup_{K\in \mathcal{K}_{X}}n(K).
\end{equation*}
For $K\in \mathcal{K}_{X}$ consider the map $\mathcal{E}(X)\to \mathcal{E}(K,X)$ given by $\e\mapsto \e(K)$. This map is well defined and onto thanks to Point 5 in Proposition \ref{prop.ends-general} and therefore $n(K)\leq \#\mathcal{E}(X)$. Taking the supremum over $K$ we deduce $n^{\ast}\leq \#\mathcal{E}(X)$. In particular $\#\mathcal{E}(X)=\infty$ if $n^{\ast}=\infty$. Now suppose instead that $n^{\ast}$ is finite. Then there exists $K$ compact such that $n(K)=n^{\ast}$. Moreover $n(H)=n^{\ast}$ for every $H\in \mathcal{K}_{X}$ containing $K$. Thanks to Points 4 and 5 in Proposition \ref{prop.ends-general}, we can see that the map $\mathcal{E}(H,X)\to \mathcal{E}(K,X)$ is a bijection and therefore $\mathcal{E}(X)\to \mathcal{E}(K,X)$ is a bijection as well. This gives $\#\mathcal{E}(X)=n^{\ast}$ and we are done.
\end{proof}

\begin{proof}[Proof of Proposition \ref{prop.ends-shrinker}]
Let $N=\reg \setminus \Crit$, where $\Crit$ is the set of critical points of $f$. Then $N$ is open in $\reg$ and contains $W$. Take $x_{0}\in N$ and let $\gamma \colon J\to N$ be the maximal integral curve of $\nabla f/|\nabla f|^{2}$ with $\gamma(0)=x_{0}$. Differentiating $f(\gamma(t))$ we get $\frac{d}{dt}f(\gamma(t))=1$ and therefore 
\begin{equation}\label{evoluzionef}
 f(\gamma(t))=f(x_{0})+t, \quad \forall t\in J.  
\end{equation} 
Using the definition of $\varrho$ we deduce
\begin{equation}\label{evoluzionerho}
\varrho^{2}(\gamma(t))=\varrho^{2}(x_{0})+4t, \quad \forall t\in J.
\end{equation}
Suppose $x_{0}\in W$ and denote $\xi=\varrho(x_{0})$. We claim that $I\subseteq \varrho(\gamma(J))$. Indeed let $J=(\alpha,\beta)$ and take $\xi \leq \zeta \in I$, then there exists $t_{\ast} \in [0,\beta)$ such that $\varrho(\gamma(t_{\ast}))>\zeta$. Suppose by contradiction that this is not the case. Since $\varrho$ is increasing along $\gamma$ we have $\gamma(t)\in D[\xi,\zeta]\cap W$ for every $t\in [0,\beta)$ and in particular $\beta<\infty$. Now $D[\xi,\zeta]\cap W\subseteq N$ and it is compact, being the intersection of 
the compact subset $D[\xi,\zeta]\subseteq D(I)$ and $W$, which is closed in $D(I)$. This contradicts the escape lemma (see e.g. \cite[Lemma 9.19]{lee}), and we conclude. In the same way one sees that for every $\xi \geq \zeta \in I$ there exists $t_{\ast}\in (\alpha,0]$ such that $\varrho(\gamma(t_{\ast}))<\zeta$. This proves $\varrho(W)=I$. We can use this to show the other four points of the proposition.

\begin{enumerate}
\item If $I$ is an open interval, all the claims are obvious. Suppose $I$ is not open. Since by assumption $W$ does not contains any critical point of $\varrho$ it is an $n$-dimensional submanifold with boundary of $\reg$. From $\varrho(W)=I$ we obtain $\partial W=D(\partial I \cap I)\cap W \neq \emptyset$ and $W^{\circ}=D(I^{\circ}) \cap W$. Since $W$ is connected also $W^{\circ}$ is connected and therefore it coincides with the unique connected component of $D(I^{\circ})$ contained in $W$.

\item If $\xi \in I$ then $W(\xi)=D(\xi)\cap W\neq \emptyset$. If follows from the preceding point and \eqref{evoluzionerho} that for every $x_{0}\in W(\xi)$ the maximal integral curve of $\nabla f/|\nabla f|^{2}$ starting from $x_{0}$ is defined at least on the interval $I(\xi)=\{t\in \R\colon t=(s^{2}-\xi^{2})/4, s \in I\}$ and is contained in $W$ for such times. Therefore we can consider the map $\omega\colon W(\xi)\times I(\xi)\to W$ induced by the local flow of $\nabla f/|\nabla f|^{2}$. Then $\omega$ is smooth and injective, as one can see using \eqref{evoluzionerho}. Now take $y_{0}\in W$. If $y_{0}\in W(\xi)$ then $y_{0}=\omega(y_{0},0)$, otherwise let $\eta\colon J\to N$ be the integral curve of $X$ starting from $y_{0}$. The same argument used above shows that there exists $t_{\ast}\in J$ such that $z_{0}=\eta(t_{\ast})\in W(\xi)$. In particular $t_{\ast}=(\varrho^{2}(y_{0})-\xi^{2})/4$ and $-t_{\ast}\in I$. Then $y_{0}=\omega(z_{0},-t_{\ast})$ and therefore $\omega$ is a diffeomorphism. As $W$ is connected, $W(\xi)$ is connected too and is the unique connected component of $D(\xi)$ that is contained in $W$. The case of $\nabla f$ can be proven in the same way and we omit the proof.

\item Let $\omega \colon W(\xi) \times I(\xi) \to W$ be as in Point 2. Then $\varrho^{2}(\omega(x,t))=\xi^{2}+4t$ and $D(J)\cap W=\omega(W(\xi)\times J(\xi))$, which is connected since $J(\xi)$, defined analogously as $I(\xi)$, is an interval.

\item Since $\varrho(W^{\circ})=I^{\circ}$ and $\sup I=\infty$, it follows by \eqref{rhogrowth-singular} that $d(x,p)$ is unbounded on $W^{\circ}$. Denoting $a=\inf I$ we deduce that $W^{\circ}$ is an end of $\X$ relative to $D[0,a]$, as claimed. Now consider the exhaustion of $\X$ by the compact subsets given by $K_{i}=D[0,a+i]$ for $i \in \N$. For every $i$ we have $\X\setminus K_{i}=D(a+i,\infty)$ and thanks to the preceding point there exists a unique connected component of this set that is contained in $W$. As above, denote this component by $W(a+i,\infty)$. Being unbounded, $W(a+i,\infty)$ is and end of $\X$ relative to $D[0,a+i]$ and we conclude using Point 4 of Proposition \ref{prop.ends-general}.\qedhere
\end{enumerate}
\end{proof}

\section{Nearby Riemannian metrics}\label{appendix-nearbymetrics}

In the following, $C$ will always denote a positive constant (depending on $n$) that possibly changes from line to line. The three propositions of this appendix are all obtained by direct calculations, but we include brief sketches of proofs for completeness.

\begin{prop}\label{nearmetrics} 
Let $g,h$ be Riemannian metrics on a smooth manifold $M$ and $\varepsilon\colon M\to (0,1/2)$ a function such that $|g-h|_{g}<\varepsilon$. Then for every vector field $v$ on $M$, we have
\begin{equation}\label{Lipschitzequiv}
(1-\varepsilon)|v|^{2}_{g}\leq |v|^{2}_{h}\leq (1+\varepsilon)|v|^{2}_{g}.
\end{equation}
In particular $g$ and $h$ are Lipschitz equivalent. Moreover $|g-h|_{h}\leq C\varepsilon$ and, more generally, 
\begin{equation*}
|T|_{h}\leq C |T|_{g}
\end{equation*}
for every tensor field $T$, where $C>0$ is a constant depending only on $n$ and the type of $T$.
\end{prop}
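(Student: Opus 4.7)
The plan is to work pointwise and to simultaneously diagonalize $g$ and $h$. Fix $x \in M$ and choose a basis $(e_i)$ of $T_x M$ that is orthonormal for $g_x$ and in which $h_x$ is diagonal; such a basis exists because $h_x$ is a symmetric, positive definite bilinear form with respect to $g_x$. Write $h(e_i, e_j) = (1+\lambda_i)\delta_{ij}$, so that $(h-g)(e_i, e_j) = \lambda_i \delta_{ij}$. In this $g$-orthonormal basis
\[ |h - g|_g^2 = \sum_i \lambda_i^2, \]
and the hypothesis $|g-h|_g < \varepsilon$ therefore forces $|\lambda_i| < \varepsilon < 1/2$ for every $i$. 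In particular each factor $(1+\lambda_i)$ belongs to the interval $(1/2, 3/2)$.

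With this normal form in hand, each of the three assertions reduces to a short calculation in the chosen basis. For a vector $v = \sum v^i e_i$, I would compute $|v|_g^2 = \sum (v^i)^2$ and $|v|_h^2 = \sum (1+\lambda_i)(v^i)^2$, which immediately gives
\[ (1-\varepsilon)|v|_g^2 \leq |v|_h^2 \leq (1+\varepsilon)|v|_g^2, \]
yielding the Lipschitz equivalence pointwise, and therefore globally. For the second estimate, the inverse matrix is $h^{ij} = (1+\lambda_i)^{-1}\delta^{ij}$ while $(g-h)_{ij} = -\lambda_i\delta_{ij}$, so
\[ |g - h|_h^2 = \sum_i \frac{\lambda_i^2}{(1+\lambda_i)^2} \leq \frac{1}{(1-\varepsilon)^2}\sum_i \lambda_i^2 \leq C \varepsilon^2. \]

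For a general $(p, q)$-tensor $T$, I would expand $|T|_h^2$ in the same basis and observe that each covariant slot introduces a factor $(1+\lambda_i)^{-1}$ (through $h^{ij}$), while each contravariant slot introduces a factor $(1+\lambda_i)$ (through $h_{ij}$); since every such factor lies in $[1/2, 2]$, the product of $p + q$ of them is bounded above and below by constants $C = C(n, p, q)$, and $|T|_h^2 \leq C|T|_g^2$ follows. No real obstacle is expected here: the diagonalization is only used pointwise, and because the bound $|\lambda_i| < 1/2$ is uniform, the constants are independent of $x$ and depend on $\varepsilon$ only through the explicit factors displayed above.
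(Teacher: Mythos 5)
Your proof is correct, and it takes a cleaner linear-algebraic route than the paper. The paper chooses $g$-normal coordinates at a point so that $g_{ij}=\delta_{ij}$, leaves $h$ as a general symmetric matrix, and then controls $h^{ij}$ by bounding the Frobenius norm of $I - h^{-1}$ via a Neumann-series estimate; this produces constants with an explicit $n^4$ factor. You instead simultaneously diagonalize $g$ and $h$ at each point (spectral theorem for the $g$-symmetric endomorphism induced by $h$), reducing every norm computation to a diagonal one in the eigenvalues $1+\lambda_i$, each of which is pinned in $(1/2,3/2)$. This buys sharper, dimension-independent constants: $|g-h|_h \le \varepsilon/(1-\varepsilon) \le 2\varepsilon$, and for a $(p,q)$-tensor $|T|_h \le 2^{(p+q)/2}|T|_g$, with no dependence on $n$ at all (so your constant is actually better than what you state). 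The paper's choice of normal coordinates is presumably made for uniformity with Propositions B.2 and B.3, where normal coordinates (not just a $g$-orthonormal frame) are genuinely needed to kill Christoffel symbols; for the present proposition alone, your diagonalization is the more economical tool. One small wording remark: when you say "covariant slot introduces $(1+\lambda_i)^{-1}$ through $h^{ij}$ and contravariant slot introduces $(1+\lambda_i)$ through $h_{ij}$", this is correct, but be aware it yields a bound $|T|_h \le C|T|_g$ where $C$ depends only on the type $(p,q)$ of $T$, not on $n$ — which is a strictly stronger statement than the one claimed.
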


\begin{proof}
Note that (even without the assumption $\varepsilon < 1/2$)
\begin{equation*}
|(g-h)(v,v)|\leq |g-h|_{g}|v|^{2}_{g}\leq \varepsilon |v|^{2}_{g}
\end{equation*}
for every vector field $v$. Writing $h(v,v)=(h-g)(v,v)+g(v,v)$ both bounds in \eqref{Lipschitzequiv} follow. If $\varepsilon < 1/2$ this obviously gives the Lipschitz equivalence of the metrics.

Now let $p\in M$ and choose normal coordinates for $g$ at $p$. Then $g_{ij}=\delta_{ij}$ and
\begin{equation*}
|g-h|_{g}^{2}=\sum_{i,j=1}^{n}|\delta_{ij}-h_{ij}|^{2}\leq \varepsilon^{2}.
\end{equation*}
This means that the Frobenius norm of the matrix $I-h$ is less than one and therefore
\begin{equation*}
\sum_{i,j=1}^{n}|\delta^{ij}-h^{ij}|^{2}\leq \Big(\frac{\varepsilon}{1-\varepsilon}\Big)^{2} \leq 4\varepsilon^2.
\end{equation*}
At this point, we conclude that
\begin{equation*}
|g-h|_{h}^{2}=\sum_{a,b,i,j=1}^{n}h^{ai}h^{bj}(\delta_{ab}-h_{ab})(\delta_{ij}-h_{ij})\leq n^{4}\Big(1+\frac{\varepsilon}{1-\varepsilon}\Big)^{2}\varepsilon^{2} =C\varepsilon^{2}.
\end{equation*}
A similar argument gives the result for general tensor fields $T\in \Gamma(T^{\ell}_{k}M)$ with $C=C(n,\ell,k)$.
\end{proof}

\begin{prop}\label{curvaturametrichevicine} 
Let $g,h$ be two Riemannian metrics on $M$ and suppose 
\begin{equation*}
|\nabla^{\ell}_{g}(g-h)|_{g}\leq \varepsilon_{\ell}
\end{equation*} 
for $0\leq \ell \leq 2$ and functions $\varepsilon_{\ell}\colon M\to (0,1/2)$. Then
\begin{equation*}
|\Rm(g)-\Rm(h)|_{g}\leq C(\varepsilon_{1}^{2}+\varepsilon_{2})
\end{equation*}
on $M$, with a constant $C>0$ depending only on $n$.
\end{prop}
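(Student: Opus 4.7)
The plan is to reduce everything to the standard tensorial formula expressing $\Rm(h) - \Rm(g)$ in terms of the difference of Christoffel symbols, which is a genuine tensor since the two connections differ by a tensor field. Concretely, I would introduce
\begin{equation*}
T^{k}_{ij} = \Gamma(h)^{k}_{ij} - \Gamma(g)^{k}_{ij} = \tfrac{1}{2}\, h^{k\ell}\bigl(\nabla^{g}_{i}h_{j\ell} + \nabla^{g}_{j}h_{i\ell} - \nabla^{g}_{\ell}h_{ij}\bigr),
\end{equation*}
using that $\nabla^{g}g = 0$, so $\nabla^{g}h = \nabla^{g}(h-g)$ and hence $|\nabla^{g}h|_{g} \leq \varepsilon_{1}$. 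The standard derivation (e.g.\ by computing $(\nabla^{h} - \nabla^{g})$ on arbitrary vector fields and iterating) then gives
\begin{equation*}
\Rm(h) - \Rm(g) = \nabla^{g} T \ast \mathrm{id} + T \ast T,
\end{equation*}
where $\ast$ denotes a universal contraction. My job is to estimate both pieces in the $g$-norm.

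First I would control $T$ itself. Since $|g-h|_{g} \leq \varepsilon_{0} < 1/2$, Proposition \ref{nearmetrics} applied to the inverse metrics (or a direct Neumann series argument using $h = g + (h-g)$ in $g$-orthonormal coordinates) gives $|h^{-1}|_{g} \leq C$. Combined with the formula above, this yields $|T|_{g} \leq C\,|h^{-1}|_{g}\,|\nabla^{g}h|_{g} \leq C\varepsilon_{1}$, and hence $|T \ast T|_{g} \leq C\varepsilon_{1}^{2}$.

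Next I would control $\nabla^{g} T$. Differentiating the formula for $T$ gives a schematic expression $\nabla^{g}T = (\nabla^{g}h^{-1}) \ast \nabla^{g}h + h^{-1} \ast \nabla^{g,2}h$. For the first term, differentiating the identity $h^{-1}h = \mathrm{id}$ produces $\nabla^{g}h^{-1} = -h^{-1}(\nabla^{g}h)h^{-1}$, so that $|\nabla^{g}h^{-1}|_{g} \leq C\varepsilon_{1}$; this piece contributes at most $C\varepsilon_{1}^{2}$. For the second term, $|\nabla^{g,2}h|_{g} = |\nabla^{g,2}(h-g)|_{g} \leq \varepsilon_{2}$, contributing $C\varepsilon_{2}$. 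Putting both estimates together gives $|\nabla^{g}T|_{g} \leq C(\varepsilon_{1}^{2} + \varepsilon_{2})$, and combining with the bound on $T \ast T$ yields the claim $|\Rm(g) - \Rm(h)|_{g} \leq C(\varepsilon_{1}^{2} + \varepsilon_{2})$.

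There is no real obstacle here; the only mild subtlety is keeping track of the fact that the various norms involve both $h^{-1}$ and $g^{-1}$, which is handled uniformly by $\varepsilon_{0} < 1/2$ and Proposition \ref{nearmetrics}. All constants depend only on $n$ through the combinatorics of the contractions and the dimension-dependent constant in Proposition \ref{nearmetrics}.
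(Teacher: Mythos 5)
Your proposal is correct and follows essentially the same route as the paper: both estimate the curvature difference via the Christoffel-symbol difference tensor $T = \Gamma(h)-\Gamma(g)$, bounding $|T|_g \lesssim \varepsilon_1$ and $|\nabla^g T|_g \lesssim \varepsilon_1^2 + \varepsilon_2$ and combining them through the identity $\Rm(h)-\Rm(g) = \nabla^g T \ast \mathrm{id} + T \ast T$. The paper simply carries out this computation in $g$-normal coordinates at a point (where $\Gamma(g)$ vanishes and partial derivatives become covariant derivatives), while you phrase it invariantly; the content is identical.
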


\begin{proof}
Choose normal coordinates for $g$ at a point $p\in M$. All calculations below are carried out at the origin of these normal coordinates. Write $A=\Rm(g)-\Rm(h)$, and denote by $\Gamma$ and $\gamma$ the Christoffel symbols of $g$ and $h$, respectively.  We have
\begin{equation}\label{eqAijkl}
A_{ijk}^{\ell}= \tfrac{\partial}{\partial x^{j}}\big(\gamma_{ik}^{\ell}-\Gamma_{ik}^{\ell}\big) -
\tfrac{\partial}{\partial x^{i}}\big(\gamma_{jk}^{\ell}-\Gamma_{jk}^{\ell}\big) + \big(\Gamma_{jk}^{s}\Gamma_{is}^{\ell}-\Gamma_{ik}^{s}\Gamma_{js}^{\ell}\big) -\big(\gamma_{jk}^{s}\gamma_{is}^{\ell}-\gamma_{ik}^{s}\gamma_{js}^{\ell}\big).
\end{equation}
Recall that the Christoffel symbols of $g$ vanish at the origin of normal coordinates and therefore the third term above vanishes. Moreover, we obtain
\begin{align}
\nabla_b h_{cd} &= \frac{\partial h_{cd}}{\partial x^b} - \Gamma_{bc}^s h_{sd} - \Gamma_{bd}^s h_{cs} = \frac{\partial h_{cd}}{\partial x^b}, \label{eqfirstderchange} \\
\nabla^2_{ab}h_{cd} &=\nabla_a \nabla_b h_{cd} = \frac{\partial^2 h_{cd}}{\partial x^a \partial x^b} - \frac{\partial \Gamma_{bc}^s}{\partial x^a} h_{sd} -  \frac{\partial\Gamma_{bd}^s}{\partial x^a} h_{cs}. \label{eqsecondderchange}
\end{align}
Thus, we find for the first term on the right hand side of \eqref{eqAijkl},
\begin{align*}
\frac{\partial}{\partial x^{j}}\Big(\gamma_{ik}^{\ell}-\Gamma_{ik}^{\ell}\Big) &= \frac{1}{2} \frac{\partial h^{m\ell}}{\partial x^j} 
\bigg(\frac{\partial h_{km}}{\partial x^i} + \frac{\partial h_{im}}{\partial x^k} - \frac{\partial h_{ik}}{\partial x^m}\bigg)
+ \frac{1}{2} h^{m\ell}\bigg(\frac{\partial^2 h_{km}}{\partial x^j \partial x^i} + \frac{\partial^2 h_{im}}{\partial x^j \partial x^k} - \frac{\partial^2 h_{ik}}{\partial x^j \partial x^m}\bigg) - \frac{\partial \Gamma_{ik}^{\ell}}{\partial x^j} \\
&= \frac{1}{2} \frac{\partial h^{m\ell}}{\partial x^j} 
\Big(\nabla_i h_{km} + \nabla_k h_{im}- \nabla_m  h_{ik}\Big) + \frac{1}{2} h^{m\ell}\Big(\nabla^2_{ji} h_{km} + \nabla^2_{jk} h_{im} - \nabla^2_{jm} h_{ik}\Big).
\end{align*}
Here we changed the first derivatives of $h$ using \eqref{eqfirstderchange} and the second derivatives using \eqref{eqsecondderchange}, noting that four of the six Christoffel symbols that arise from this latter change cancel each other while the remaining two cancel with $- \frac{\partial}{\partial x^j}  \Gamma_{ik}^{\ell}$.

As seen in Proposition \ref{nearmetrics}, from the hypothesis $|h-g|_g\leq \varepsilon_0$, we obtain $|h_{ij}-\delta_{ij}|\leq \varepsilon_{0}$ and $|h^{ij}-\delta^{ij}|\leq C\varepsilon_{0}$. In particular $|h_{ij}|\leq 1+\varepsilon_{0}$ and $|h^{ij}|\leq 1+C\varepsilon_{0}$. Similarly, from $|\nabla h|_g \leq \varepsilon_1$, we get $|\nabla_{i}h_{jk}|\leq \varepsilon_{1}$ and with \eqref{eqfirstderchange} we conclude $|\partial_{i}h_{jk}|\leq \varepsilon_{1}$ and as a simple consequence $|\partial _{i}h^{jk}|\leq C\varepsilon_{1}$. At this point, combining the above with the assumption $|\nabla^2 h|_g\leq \varepsilon_2$, we conclude
\begin{equation*}
\Big| \tfrac{\partial}{\partial x^{j}}\big(\gamma_{ik}^{\ell}-\Gamma_{ik}^{\ell}\big) \Big| \leq C\varepsilon_1^2 + C(1+C\varepsilon_0) \varepsilon_2 \leq C(\varepsilon_1^2+\varepsilon_2).
\end{equation*}
Clearly, the second term on the right hand side of \eqref{eqAijkl} can be estimated in the same way, while for the third term one can use
\begin{equation*}
|\gamma_{ij}^k| \leq C(1+C\varepsilon_0)\varepsilon_1 \leq C\varepsilon_1.
\end{equation*}
This immediately yields $|A_{ijk}^{\ell}|\leq C(\varepsilon_1^2+\varepsilon_2)$, and the claim follows.
\end{proof}

\begin{prop}\label{differenzatensoremetriche} 
Let $g,h$ be two Riemannian metrics on the smooth manifold $M$ and suppose 
\begin{equation*}
|\nabla^{\ell}_{g}(g-h)|_{g}<\varepsilon_{\ell}
\end{equation*} 
for $0\leq \ell \leq 2$ and functions $\varepsilon_{\ell}\colon M\to (0,1/2)$. 
If $T$ is a symmetric $(0,2)$-tensor field then 
\begin{equation*}
|\nabla_{g}T-\nabla_{h}T|_{g}\leq C\varepsilon_{1}|T|_{g}
\end{equation*}
and
\begin{equation*}
|\nabla^{2}_{g}T-\nabla^{2}_{h}T|_{g}\leq C(\varepsilon_{1}^{2}+\varepsilon_{2})|T|_{g}+C\varepsilon_{1}|\nabla_{g}T|_{g},
\end{equation*}
where $C>0$ is a constant depending only on $n$.
\end{prop}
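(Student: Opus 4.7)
The plan is to exploit the fact that the difference of two Levi-Civita connections is a tensor. Set $A := \nabla^h - \nabla^g$, a $(1,2)$-tensor whose components in local coordinates are $A^\ell_{ij} = \gamma^\ell_{ij} - \Gamma^\ell_{ij}$. Using that both connections are metric-preserving with respect to their own metrics and torsion-free, the standard Koszul-type computation gives the tensorial identity
\[
h(A(X,Y), Z) = \tfrac{1}{2}\bigl[(\nabla^g_X h)(Y,Z) + (\nabla^g_Y h)(X,Z) - (\nabla^g_Z h)(X,Y)\bigr],
\]
equivalently $A^\ell_{ij} = \tfrac{1}{2} h^{\ell m}(\nabla^g_i h_{jm} + \nabla^g_j h_{im} - \nabla^g_m h_{ij})$. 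Proposition \ref{nearmetrics} applied to $h$ and $g$ gives $|h^{ab}|\leq 1+C\varepsilon_0 \leq C$ in a $g$-orthonormal frame, which combined with $|\nabla_g h|_g \leq \varepsilon_1$ yields $|A|_g \leq C \varepsilon_1$. The first estimate then follows at once from the pointwise identity
\[
(\nabla_g T - \nabla_h T)_{kij} = A^s_{ki}\, T_{sj} + A^s_{kj}\, T_{is},
\]
which gives $|\nabla_g T - \nabla_h T|_g \leq C |A|_g |T|_g \leq C \varepsilon_1 |T|_g$.

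For the second estimate, the idea is to split
\[
\nabla^2_g T - \nabla^2_h T = \nabla_g(\nabla_g T - \nabla_h T) + (\nabla_g - \nabla_h)(\nabla_h T).
\]
The second summand is treated exactly as in the first step, with the $(0,3)$-tensor $\nabla_h T$ in place of $T$: one obtains a bound $C|A|_g |\nabla_h T|_g \leq C\varepsilon_1 |\nabla_h T|_g$, and the first estimate itself gives $|\nabla_h T|_g \leq |\nabla_g T|_g + C\varepsilon_1 |T|_g$, so this contribution is at most $C\varepsilon_1 |\nabla_g T|_g + C\varepsilon_1^2 |T|_g$. For the first summand, applying the covariant product rule to the schematic identity $\nabla_g T - \nabla_h T = A \ast T$ gives
\[
\nabla_g(\nabla_g T - \nabla_h T) = (\nabla_g A) \ast T + A \ast \nabla_g T,
\]
whose norm is bounded by $|\nabla_g A|_g |T|_g + C\varepsilon_1 |\nabla_g T|_g$.

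To conclude it remains to establish $|\nabla_g A|_g \leq C(\varepsilon_1^2 + \varepsilon_2)$. Differentiating the explicit formula for $A$ with $\nabla_g$ produces two types of terms: first, terms of the form $h^{-1} \cdot \nabla^2_g h$, which are controlled by $C\varepsilon_2$; and second, terms of the form $(\nabla_g h^{-1}) \cdot \nabla_g h$, which using the Cramer-type identity $\nabla_g h^{ab} = - h^{ac} h^{bd} \nabla_g h_{cd}$ combined with Proposition \ref{nearmetrics} yield size $C\varepsilon_1^2$. Combining these ingredients produces the claimed inequality.

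The main obstacle is the bookkeeping in the estimate $|\nabla_g A|_g \leq C(\varepsilon_1^2+\varepsilon_2)$: one must verify that every derivative falling on an inverse metric contributes an additional factor of $\varepsilon_1$ via Cramer's rule, so that no ``bare'' $\varepsilon_0$ factor survives that would spoil the desired bound.
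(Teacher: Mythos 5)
Your proof is correct, and it is essentially the same argument as the paper's, reformulated invariantly. The paper works at a fixed point in $g$-normal coordinates and manipulates the Christoffel symbols $\gamma, \Gamma$ directly, using the estimates on $\gamma$ and on $\partial(\gamma-\Gamma)$ already set up in the proof of Proposition~\ref{curvaturametrichevicine}; you package the same content into the difference-of-connections tensor $A = \nabla^h - \nabla^g$, the Koszul identity $A^k_{ij} = \tfrac{1}{2}h^{kl}(\nabla^g_i h_{jl} + \nabla^g_j h_{il} - \nabla^g_l h_{ij})$, and the product rule. Your split $\nabla^2_g T - \nabla^2_h T = \nabla_g(\nabla_g T - \nabla_h T) + (\nabla_g - \nabla_h)(\nabla_h T)$ is a slightly cleaner organisation of the terms than the paper's direct coordinate expansion, but it produces the same two groups of terms: the $\varepsilon_1|\nabla_g T|_g$ contribution from $A \ast \nabla T$ pieces, and the $(\varepsilon_1^2 + \varepsilon_2)|T|_g$ contribution from $\nabla_g A$. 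Your final caution about not introducing a bare $\varepsilon_0$ is also sound: Proposition~\ref{nearmetrics} together with $\varepsilon_0 < 1/2$ gives $|h^{-1}|_g \leq C(n)$ uniformly, so the Cramer identity yields $|\nabla_g h^{-1}|_g \leq C\varepsilon_1$ with no dangerous $\varepsilon_0$ factors. Nothing to fix.
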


\begin{proof}
Denote by $\nabla, D$ and $\Gamma,\gamma$ the Levi-Civita connections and the Christoffel symbols of $g$ and $h$, respectively. Choose normal coordinates for $g$ at a point $p\in M$. Working in the origin of these normal coordinates, we have seen in the preceding proofs that
\begin{equation*}
|h^{ij}|\leq C,\quad |\nabla_{i}h_{jk}|\leq \varepsilon_{1},\quad |\partial_{i}h^{jk}|\leq C\varepsilon_{1},\quad |\gamma_{ij}^{k}|\leq C\varepsilon_{1},
\quad |\nabla^{2}_{ij}h_{km}|\leq \varepsilon_{2}.
\end{equation*}
Computing the first covariant derivatives of $T$, we easily obtain
\begin{equation*}
\nabla_{i}T_{jk} -D_{i}T_{jk}=\gamma_{ij}^{\ell}T_{\ell k} + \gamma_{ik}^{\ell} T_{j\ell} 
\end{equation*} 
and therefore
\begin{equation*}
|\nabla T-  DT|_{g}^{2}= \sum_{i,j,k=1}^n \! \bigg(\sum_{\ell=1}^n \gamma_{ij}^{\ell}T_{\ell k} + \gamma_{ik}^{\ell}T_{j\ell}\bigg)^{2}
\leq C^{2}\varepsilon_{1}^{2} \sum_{i,j,k=1}^n \! \bigg(\sum_{\ell=1}^n |T_{\ell k}|+|T_{j\ell}|\bigg)^{2} \leq C^{2}\varepsilon_{1}^{2}|T|_{g}^{2},
\end{equation*}
proving the first claim.

The difference between the second covariant derivatives can be written as
\begin{equation*}
\nabla^{2}_{ij}T_{kl}-D^{2}_{ij}T_{kl}= \frac{\partial}{\partial x^{i}}\big(\nabla_{j}T_{k\ell}-D_{j}T_{k\ell}\big)+ \gamma_{ij}^{s}D_{s}T_{k\ell}+\gamma_{ik}^{s}D_{j}T_{s\ell}+\gamma_{i\ell}^sD_{j}T_{ks}.
\end{equation*}
The last three terms are estimated by the bound on $\gamma_{ij}^k$ and by using the above to change $DT$ to $\nabla T$. The first term on the right hand side can instead be rewritten as
\begin{equation*}
\frac{\partial}{\partial x^{i}}\big(\nabla_{j}T_{k\ell}-D_{j}T_{k\ell}\big) =
\frac{\partial}{\partial x^{i}}\big(\gamma_{jk}^{s}-\Gamma_{jk}^{s}\big) T_{s\ell} +
\frac{\partial}{\partial x^{i}}\big(\gamma_{j\ell}^{s}-\Gamma_{j\ell}^{s}\big) T_{ks}
 +\gamma_{jk}^{s}\frac{\partial T_{s\ell}}{\partial x^{i}} + \gamma_{j\ell}^{s}\frac{\partial T_{ks}}{\partial x^{i}}.
\end{equation*}
The proof then follows by the estimates for $\gamma_{ij}^k$ as well as for the derivatives of the difference of the Christoffel symbols $\gamma$ and $\Gamma$, exactly as in the proof of Proposition \ref{curvaturametrichevicine} above.
\end{proof}


\makeatletter
\def\@listi{%
  \itemsep=0pt
  \parsep=1pt
  \topsep=1pt}
\makeatother
{\fontsize{10}{12}\selectfont

}
\vspace{-2mm}
\printaddress

\end{document}